\pgfplotsset{width=7cm}
\newcommand{\addresseshere}{%
  \enddoc@text\let\enddoc@text\relax
}
\theoremstyle{plain}
\newtheorem{lemma}{Lemma}[section]
\newtheorem{theorem}[lemma]{Theorem}
\newtheorem{corollary}[lemma]{Corollary}
\theoremstyle{definition}
\newtheorem{definition}[lemma]{Definition}
\newtheorem{remark}[lemma]{Remark}
\newtheorem{question}[lemma]{Question}
\numberwithin{equation}{section}
\def\XXint#1#2#3{{\setbox0=\hbox{$#1{#2#3}{\int}$ }
\vcenter{\hbox{$#2#3$ }}\kern-.5\wd0}}
\newcommand{\R}{\mathbb{R}}
\newcommand{\N}{\mathbb{N}}
\newcommand{\Zz}{\mathbf{Z}}
\newcommand{\mcK}{\mathcal{K}}
\newcommand{\D}{\mathcal{D}}
\newcommand{\loc}{\text{\rm loc}}
\newcommand{\RLF}{\boldsymbol X}
\newcommand{\boxx}{\boldsymbol{x}}
\newcommand{\boy}{\boldsymbol{y}}
\newcommand{\boz}{\boldsymbol{z}}
\newcommand{\bov}{\boldsymbol{v}}
\newcommand{\bobb}{\boldsymbol{b}}
\newcommand{\bog}{\boldsymbol{\gamma}}
\newcommand{\Ig}[1]{I_{\bog^{#1}}}
\newcommand{\Igp}[1]{I_{\bog^{#1}}^{_{\parallel}}}
\newcommand{\Ignp}[1]{I_{\bog^{#1}}^{_{\parallel \complement}}}
\newcommand{\mass}[1]{\mathbb M_{\lambda}\left(#1\right)}
\newcommand{\Ll}{\mathcal L}
\newcommand{\norm}[1]{\lVert#1\rVert}
\newcommand{\Lipl}[2]{\Lip_{#1}#2}
\newcommand{\TTT}{[0,T]}
\DeclareMathOperator{\Uno}{\mathbf{1}}
\DeclareMathOperator{\Lip}{Lip}
\DeclareMathOperator{\clos}{clos}
\DeclareMathOperator{\diver}{div_{\mathit x}}
\DeclareMathOperator{\ddz}{\partial_{s}}
\DeclareMathOperator{\amf}{\mathfrak{a}}
\DeclareMathOperator{\Amf}{\mathfrak{A}}
\DeclareMathOperator{\Ima}{Image}
\DeclareMathAlphabet{\mathcalligra}{T1}{calligra}{m}{n}
\DeclareMathAlphabet{\mathpzc}{OT1}{pzc}{m}{it}
\newcounter{stepnb}
\newcounter{substepnb}
\title[A Directional Lipschitz Extension Lemma and the Continuity Equation]{A Directional Lipschitz Extension Lemma,\\ with Applications to Uniqueness and Lagrangianity  \\ for the Continuity Equation}
\begin{document}

\author{Laura Caravenna}
\address{Laura Caravenna: Dipartimento di Matematica `Tullio Levi Civita', Universit\`a di Padova, via Trieste~63, 35121 Padova, Italy}
\email{laura.caravenna@unipd.it}
\author{Gianluca Crippa}
\address{Gianluca Crippa: Departement Mathematik und Informatik, Universit\"at Basel, Spiegelgasse~1,\break 4051 Basel, Switzerland}
\email{gianluca.crippa@unibas.ch}

\begin{abstract}
We prove a Lipschitz extension lemma in which the extension procedure simultaneously preserves the Lipschitz continuity for two non-equivalent distances. The two distances under consideration are the Euclidean distance and, roughly speaking, the geodesic distance along integral curves of a (possibly multi-valued) flow of a continuous vector field. The Lipschitz constant for the geodesic distance of the extension can be estimated in terms of the Lipschitz constant for the geodesic distance of the original function. This Lipschitz extension lemma allows us to  remove the high integrability assumption on the solution needed for the uniqueness within the DiPerna-Lions theory of continuity equations in the case of vector fields in the Sobolev space~$W^{1,p}$, where $p$ is larger than the space dimension, under the assumption that the so-called ``forward-backward integral curves'' associated to the vector field are trivial for almost every starting point. More precisely, for such vector fields we prove uniqueness and Lagrangianity for weak solutions of the continuity equation that are just locally integrable. Additionally, for such vector fields it is possible to prove almost everywhere uniqueness of (standard) integral curves, which also implies uniqueness of positive measure solutions to the continuity equation with absolutely continuous initial datum.  

\bigskip

\end{abstract}

\maketitle


\section{Introduction}

\subsection{The continuity equation and the ordinary differential equation}
Given a (time-dependent) vector field $\bobb = \bobb (t,\boxx) : \TTT \times \R^n \to \R^n$, we consider 
the Cauchy problem for the continuity equation
\begin{equation}\label{e:pde1}
\left\{
\begin{array}{l}
\partial_t u + \diver (\bobb u) = 0 \\ 
u(t=0,\boxx) = u_0(\boxx) \,,
\end{array}\right.
\end{equation} 
in which $u_0 = u_0(\boxx) : \R^n \to \R$ is a given initial datum and $u = u(t,\boxx) : \TTT \times \R^n \to \R$ is the solution.
The classical theory of characteristics establishes a link between solutions of~\eqref{e:pde1}
and the flow $\RLF = \RLF(t,\boxx) : \TTT \times \R^n \to \R^n$ of the vector field $\bobb$, that is, the solution
of the ordinary differential equation
\begin{equation}\label{e:ode1}
\left\{
\begin{array}{l}
\displaystyle\frac{d}{dt} \RLF(t,\boxx) = \bobb (t,\RLF(t,\boxx)) \\ \\ 
\RLF(0,\boxx) = \boxx \,.
\end{array}\right.
\end{equation} 
Both problems~\eqref{e:pde1} and~\eqref{e:ode1} are classically well posed if the vector field $\bobb$ is regular enough in the space variable, more precisely Lipschitz continuous in space with some uniformity in time, and the unique solution of~\eqref{e:pde1} is given by the formula
\begin{equation}\label{e:sol1}
u(t,\RLF(t,\boxx)) = u_0 (\boxx) J\RLF(t,\boxx)^{-1}\,,
\end{equation}
where $J\RLF(t,\boxx) = {\rm det}\, \nabla \RLF (t,\boxx)$ is the Jacobian of the flow. The formula above expresses the fact that the solution $u$ is Lagrangian, i.e., transported by the flow $\RLF$ and stretched by a factor given by the Jacobian of the flow. 

\medskip

\subsection{The DiPerna-Lions theory}\label{ss:DPL}
The by now classical DiPerna-Lions theory~\cite{DPL} deals with the well-posedness of both problems~\eqref{e:pde1} and~\eqref{e:ode1}, and with the validity of formula~\eqref{e:sol1}, in the case when the
vector field has only Sobolev regularity with respect to the space variable. More specifically, in~\cite{DPL} the authors consider vector fields for which, for a given $1 \leq p \leq \infty$, it holds
\begin{subequations}\label{e:bobb}
\begin{align}
\bobb &\in L^1 \left( \TTT ; W^{1,p}_{\rm loc} (\R^n;\R^n) \right) \,, \label{e:bobb-a} \\
\diver \bobb &\in L^1 \left( \TTT ; L^\infty(\R^n) \right) \,, \label{e:bobb-b} \\ 
\displaystyle \frac{| \bobb (t,\boxx)|}{1 + |\boxx|} &\in L^1 \left( \TTT ; L^1(\R^n) \right) + L^1 \left( \TTT ; L^\infty(\R^n) \right) \,.
\label{e:bobb-c}
\end{align}
\end{subequations}

In this low-regularity context, weak solutions of~\eqref{e:pde1} are defined in the usual distributional sense, by testing the equation with smooth functions and integrating by parts. More precisely, this amounts to requiring the validity of
\begin{equation}\label{e:weakform}
\iint u \left( \partial_t \varphi + \bobb \cdot \nabla \varphi \right) \, dx dt = - \int u_0 \varphi(0,\cdot) \, dx 
\end{equation}
for every $\varphi \in C^\infty_c (\TTT \times \R^n)$. Notice that~\eqref{e:weakform} makes sense provided that $u$, $u_0$, and $\bobb u$ are locally integrable. The suitable notion of solution of~\eqref{e:ode1} is that of regular Lagrangian flow: one consider maps $\RLF = \RLF(t,\boxx) : \TTT \times \R^n \to \R^n$ that solve~\eqref{e:ode1} in distributional sense in time for almost every~$\boxx\in \R^n$, and additionally satisfy the near incompressibility condition
\begin{equation}\label{e:inc}
\Ll^n (B) \leq L \; \Ll^n (\RLF(t,B)) \qquad \text{ for every measurable set $B \subset \R^n$,}
\end{equation}
for a given constant $L>0$. Observe that, for a smooth vector field, condition~\eqref{e:inc} is a consequence of a uniform bound on the divergence. 

Given a vector field $\bobb$ as in~\eqref{e:bobb}, the theory in~\cite{DPL} ensures that:
\begin{itemize}
\item[(1)] There exists a unique regular Lagrangian flow $\RLF$ associated to $\bobb$.
\item[(2)] For every $u_0 \in L^q(\R^n)$, with 
\begin{equation}\label{e:soglia0}
\frac{1}{p} + \frac{1}{q} \leq 1 \,,
\end{equation}
there exists a unique weak solution $u \in L^\infty(\TTT;L^q(\R^n))$, for which a suitable weak version of formula~\eqref{e:sol1} holds, that is, such unique weak solution $u$ is Lagrangian. 
\end{itemize}

Let us focus on the integrability assumption~\eqref{e:soglia0}. 
The need for the integrability assumption $u \in L^\infty(\TTT;L^q(\R^n))$ with $p$ and $q$ as in~\eqref{e:soglia0} comes from the strategy of proof in~\cite{DPL}. The authors derive an energy estimate on the continuity equation~\eqref{e:pde1}, which however needs to be carefully justified in this low-regularity setting. Such justification is achieved via a regularization procedure, in which it is essential to show the convergence to zero of suitable commutators. Roughly speaking, such commutators can be rewritten as integral expressions involving the product of the gradient of the vector field $D\bobb$ with the weak solution $u$, which requires the integrability condition~\eqref{e:soglia0}. Notice that condition~\eqref{e:soglia0} is strictly stronger than the integrability condition $\bobb u \in L^1_{\loc}$ needed for the definition of weak solutions as in~\eqref{e:weakform}. 

\medskip

If the condition~\eqref{e:soglia0} is not satisfied, uniqueness and Lagrangianity can be proved only inside the smaller class of the so-called renormalized solutions, while uniqueness may fail for the larger class of all weak solutions (see~\S~\ref{s:MS} below). In particular, it may happen that weak solutions with insufficient integrability are non-unique. The uniqueness of the regular Lagrangian flow is ensured by the assumptions in~\eqref{e:bobb}, therefore some of these non-unique weak solutions are necessarily non-Lagrangian.

\medskip

Let us remark that the theory in~\cite{DPL} has been extended by Ambrosio~\cite{AMB} to vector fields with bounded variation (the distributional derivative is a measure), that is, replacing the Sobolev assumption in~\eqref{e:bobb} by the condition 
$\bobb \in L^1 \left( \TTT ; BV_{\rm loc} (\R^n;\R^n) \right)$, and more recently by Bianchini and Bonicatto~\cite{BB} removing the assumption of bounded divergence and replacing it by a near incompressibility condition, therefore giving a positive answer to Bressan's compactness conjecture~\cite{Br}. In both cases, uniqueness and Lagrangianity of weak solutions are obtained in the class $u \in L^\infty(\TTT;L^\infty(\R^n))$: the integrability condition~\eqref{e:soglia0} still holds in a weaker sense. 

\subsection{Main results of this paper} 
\label{s:mainres}
As discussed above, weak solutions $u \in L^\infty(\TTT;L^q(\R^n))$ of the continuity equation~\eqref{e:pde1} may be in general non-unique and non-Lagrangian if $p$ and $q$ do not satisfy condition~\eqref{e:soglia0}. Our main result asserts that, for a vector field $\bobb$ 
belonging to $W^{1,p}_{\rm loc} (\R^n;\R^n)$ uniformly in time, where $p>n$, satisfying~\eqref{e:bobb-b} and~\eqref{e:bobb-c}, and such that its associated forward-backward integral curves (as in Definition~\ref{d:fb}) are almost everywhere trivial (in the sense that their image is contained in the image of a (standard) integral curve, see again Definition~\ref{d:fb}),
uniqueness and Lagrangianity hold for weak solutions that are merely locally integrable in time and space.
\begin{theorem}
\label{T:main}
Let $\bobb$ be a vector field satisfying~\eqref{e:bobb-b} and~\eqref{e:bobb-c} and 
\begin{equation}\label{e:bobb-b'}
\bobb \in L^\infty \left( \TTT ; W^{1,p}_{\rm loc} (\R^n;\R^n) \right)
\end{equation}
with $p>n$. Assume moreover that, for almost every initial point, forward-backward integral curves of the vector field~$\bobb$ are trivial. Then, given an initial datum $u_{0} \in L^{1}_{\mathrm{loc}}(\R^{n})$, the Cauchy problem for~\eqref{e:pde1} has a unique weak solution 
$$
u \in L^{1}_{\mathrm{loc}} \left(\TTT\times\R^{n}\right) \,.
$$
Such unique weak solution is Lagrangian and renormalized. 
\end{theorem}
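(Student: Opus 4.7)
The plan is to combine a duality argument with the directional Lipschitz extension lemma that forms the main technical contribution of the paper. The underlying idea is to bypass the DiPerna--Lions commutator estimates---which are responsible for the integrability constraint~\eqref{e:soglia0}---by directly producing, for each smooth test function, a dual test function that is simultaneously ``transported along the flow'' and Lipschitz in the ordinary Euclidean sense.

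By linearity of the continuity equation, uniqueness reduces to showing that any $u \in L^{1}_{\loc}(\TTT\times\R^n)$ solving~\eqref{e:weakform} with $u_0 \equiv 0$ must vanish. Fixing $\psi \in C^\infty_c((0,T)\times\R^n)$, the natural strategy is to build $\varphi$ with $\varphi(T,\cdot)=0$ and $\partial_t\varphi+\bobb\cdot\nabla\varphi = -\psi$: inserting such a $\varphi$ into~\eqref{e:weakform} gives $\iint u\psi\,dxdt = 0$ and hence $u\equiv 0$. Under~\eqref{e:bobb-b'} with $p>n$, Morrey's embedding yields $\bobb\in L^\infty(\TTT;C^{0,1-n/p}_{\loc})$, so that classical Peano integral curves exist at every point but need not be unique. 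The forward--backward triviality assumption plays the role of a measure-theoretic substitute for uniqueness of characteristics, ensuring that the formal representation $\varphi(t,x) = \int_{t}^{T}\psi(s,\RLF(s,t,x))\,ds$ is unambiguous on a full-measure subset of $\TTT\times\R^n$.

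On the (a.e.\ covered) union of integral curves, this candidate $\varphi$ is Lipschitz with respect to the geodesic distance along the flow---being, along each curve, just the running integral of a smooth function---but a priori only continuous in the Euclidean sense. The directional Lipschitz extension lemma extends $\varphi$ to the whole cylinder while preserving the geodesic Lipschitz constant and producing, simultaneously, a controlled Euclidean Lipschitz constant. The extended $\varphi$ is therefore admissible (after a standard spatial mollification) as a test function in~\eqref{e:weakform}, while the preserved geodesic Lipschitz bound guarantees that $\bobb\cdot\nabla\varphi$ is a locally bounded function coinciding with $-\psi$ along the flow. This is exactly what is needed for the pairing against $u$ to collapse to $\iint u\psi\,dxdt$. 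Once uniqueness is established, existence of a Lagrangian renormalized solution, obtained by pushing $u_0\mathcal{L}^n$ forward along the (essentially unique, by the hypothesis) Peano flow, must agree with the unique weak solution, yielding both Lagrangianity and the renormalization property in the statement.

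The main obstacle is the compatibility of the extension with the weak pairing: it is not merely a question of producing \emph{some} Lipschitz extension (which would follow from Kirszbraun or McShane), but one that simultaneously controls \emph{both} Lipschitz constants and interacts correctly with the non-smooth field $\bobb$. Concretely, one has to verify that the geodesic-Lipschitz control is robust enough to identify $\bobb\cdot\nabla\varphi$, evaluated in the Euclidean a.e.\ sense against $u$, with the transport derivative along the flow. This identification also relies crucially on the forward--backward triviality, without which the extension could hide discrepancies between the many branches of Peano characteristics and break the duality identity in the last step.
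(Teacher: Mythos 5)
Your overall duality strategy (construct, for each smooth $\psi$, a test function $\varphi$ transported along the flow, extend it via the directional Lipschitz extension lemma, and pair against $u$) is the same as the paper's, which carries it out through the machinery of~\cite{CC}. However, there is a genuine gap at the point where you invoke the extension. You apply Theorem~\ref{T:LipschitzEstention} to a candidate $\varphi$ that, on the union of integral curves, is ``a priori only continuous in the Euclidean sense.'' The extension theorem does not apply to such a function: its hypotheses require $\varphi$ to be $M$-Lipschitz \emph{for the Euclidean distance} on a \emph{compact} flow-tube, and this hypothesis is not cosmetic --- the proof of Theorem~\ref{T:corvengenceLipConst} uses the Euclidean constant $M$ quantitatively, and Lemma~\ref{l:compar} together with Remarks~\ref{r:compar00} and~\ref{r:compar} shows that without Euclidean Lipschitz control a function can be Lipschitz along the selected curves yet fail to be $d_0$-directionally Lipschitz. (For a Sobolev field the regular Lagrangian flow need not even be continuous in $\boxx$, so your $\varphi$ may not be continuous either.) The missing ingredient is the Lusin--Lipschitz regularity of the flow from~\cite{CDL}: one fixes a compact set $K_\varepsilon$ with small complement on which $\RLF$ is Lipschitz, further removes an open neighborhood $N_\varepsilon$ of the null set of starting points with nontrivial forward-backward curves, and works on the compact flow-tube over $\widetilde K_\varepsilon = K_\varepsilon\setminus N_\varepsilon$. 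Only there is the candidate both Euclidean-Lipschitz and --- thanks to triviality, which upgrades Lipschitz continuity along flow curves in the sense of~\eqref{e:nicelip} to $d_0$-Lipschitz continuity in the sense of Definition~\ref{def:equivalenceGen} --- directionally Lipschitz, so that the extension theorem applies. Note that this is the actual role of the forward-backward triviality hypothesis, rather than the ``unambiguity of the representation formula'' you attribute to it.

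A second, related gap: after extension, the identity $\partial_t\varphi+\bobb\cdot\nabla\varphi=-\psi$ holds only on the flow-tube over $\widetilde K_\varepsilon$. Outside it, the extension preserves only the \emph{bound} $|\partial_t\varphi+\bobb\cdot\nabla\varphi|\leq L+\varepsilon$ (this is precisely what the directional Lipschitz constant of the extension controls), not the value of the transport derivative. Hence the pairing does not ``collapse'' to $\iint u\psi\,dxdt$; it produces that integral restricted to the flow-tube plus an error supported on the small complement, which must be shown to vanish as $\varepsilon\downarrow 0$ using the uniform bound $L+\varepsilon$ together with the absolute continuity of the integral of $u\in L^1_{\mathrm{loc}}$. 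This error estimate is exactly the reason the extension lemma must control the directional constant of the extension (a mere McShane extension in the Euclidean distance would not do), and it is the step your argument omits.
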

\begin{remark}
The above theorem roughly means that, apart from the assumption on the forward-backward integral curves, the same conclusion as in item~(2) in~\S~\ref{ss:DPL} above holds if $p>n$ and $q=1$, although in such a case one only has 
$$\frac{1}{p} + \frac{1}{q} < 1 + \frac{1}{n}\,,
$$
and not the more demanding condition~\eqref{e:soglia0}.
\end{remark}

\begin{remark} 
In fact, Theorem~\ref{T:main} holds even under the weaker condition~$p>1$ (replacing~$p>n$), provided that we require the vector field to be uniformly continuous in the space variable, uniformly in time (see Definition~\ref{D:cont}).
\end{remark}

\begin{remark}
Theorem~\ref{T:main} can be easily extended to the case where a source term or a linear term of zero order are present in the continuity equation, under suitable integrability conditions on the coefficients. In particular, we can also deal with the transport equation $\partial_t u + \bobb \cdot \nabla u = 0$ instead of the continuity equation~\eqref{e:pde1}.
\end{remark}

\begin{remark}\label{r:b}
It is not clear to us whether for every 
vector field~$\bobb$ satisfying~\eqref{e:bobb-b'},~\eqref{e:bobb-b}, and~\eqref{e:bobb-c}
forward-backward integral curves are almost everywhere trivial. We prove in Corollary~\ref{c:aeuni} that for such a vector field (standard) integral curves are almost everywhere unique. However, we observe in Remark~\ref{r:nouni} that, for a given initial point, uniqueness of standard integral curves does not imply 
the triviality of forward-backward integral curves starting at that point. 
\end{remark}

In fact, the proof of Theorem~\ref{T:main} relies on our previous work~\cite{CC},
in which we presented a conditional proof of the result under the assumption that a certain geometric property holds for the regular Lagrangian flow of~$\bobb$, compare Assumption~3.3 in~\cite{CC}. Such Assumption~3.3 postulates the validity of a suitable directional Lipschitz extension lemma. The proof of such a lemma under the assumptions of Theorem~\ref{T:main} is the main result of the present article, compare Theorem~\ref{T:LipschitzEstention}. We will summarize the conditional proof from~\cite{CC} and briefly describe the setting and the strategy of the proof of Theorem~\ref{T:LipschitzEstention} in~\S~\ref{s:SP} below. 
Before that, let us briefly pause in order to describe some related results.

\medskip

For the well-posedness of problems~\eqref{e:pde1} and~\eqref{e:ode1} the two-dimensional case $n=2$ turns out to be very peculiar, roughly speaking for topological reasons. The analysis in~\cite{ABC} provides a characterization of the autonomous, bounded, divergence-free vector fields $\bobb : \R^2 \to \R^2$ for which uniqueness for the continuity equation~\eqref{e:pde1} holds, in terms of a certain weak Sard property of the stream function associated to the vector field. Such weak Sard property is satisfied, for instance, in the case of Sobolev or $BV$ vector fields, and in turn it implies uniqueness and Lagrangianity of weak solutions $u \in L^{1}_{\mathrm{loc}} \left(\TTT\times\R^{2}\right)$. The proof in~\cite{ABC} is very specific to the autonomous two-dimensional setting and does not appear suitable for extensions to higher dimension, or to the non-autonomous case.

Uniqueness and Lagrangianity of weak solutions with lower integrability than the one required by~\eqref{e:soglia0} can be obtained in some situations carrying a physical meaning, for instance for vanishing viscosity solutions of the two-dimensional Euler equations with vorticity in $L^p$, for any $1 \leq p \leq \infty$, see~\cite{E1,E2} (and also~\cite{E3} for solutions obtained as limit of the vortex-blob approximation scheme).

For the case of one space dimension, the analysis of~\cite{CSC,ABCar} proves that continuous solutions to the nonlinear balance equation $\partial_t u + \partial_x f(u) = g$, where $g$ is bounded, are Lagrangian, under a technical nondegeneracy condition on the flux $f$. Owing to the total ordering of the real line, indeed, using Peano theorem one can suitably define a meaningful notion of pointwise flow of~\eqref{e:ode1}, even though in general one does not obtain the uniqueness of integral curves nor the near incompressibility condition~\eqref{e:inc}. Continuous solutions of balance laws are of interest both for geometric reasons, see for instance~\cite{V,SC} and the references therein, and because the source might act as a control device preserving continuity; this could be the case also for the continuity equation~\eqref{e:pde1}.

\subsection{Examples of non-uniqueness and non-Lagrangianity via convex integration}\label{s:MS}

Let us first of all observe that it is possible to construct vector fields in $W^{1,p}$ for every $1 \leq p < n$ and weak solutions to~\eqref{e:pde1} in $L^q$ for all $1 \leq q < \frac{n}{n-1}$ which are non-unique, non-Lagrangian, and non-renormalized~\cite{B}. The divergence of $\bobb$ belongs however only to $L^p$ and is not bounded. Notice that for such a vector field the theory in~\cite{DPL} guarantees the renormalization property for all weak solutions in $L^q$ with $q > \frac{n}{n-1}$, although the boundedness of the divergence would be required in order to establish their uniqueness. See also~\cite{BEA}, in which the condition that the divergence of the vector field belongs to $L^p$ with $p>n$ plays a key role for the convergence of finite volume schemes for the continuity equation. 

Striking examples of non-uniqueness and non-Lagrangianity with divergence-free vector fields have been constructed very recently by Modena and Sz\'ekelyhidi~\cite{MS} via convex integration. In dimension $n \geq 3$, given integrability indexes $1 \leq p < \infty$ and $1 < q < \infty$ such that
\begin{equation}\label{e:soglia1}
\frac{1}{p} + \frac{1}{q} > 1 + \frac{1}{n-1} \,,
\end{equation}
the authors of~\cite{MS} are able to construct a divergence-free vector field $\bobb \in C \left( \TTT ; W^{1,p} \cap L^{q'} (\R^n) \right)$ such that the continuity equation~\eqref{e:pde1} has infinitely many weak solutions $u \in C \left( \TTT ; L^q(\R^n) \right)$. In the previous expression, $q'$ is such that $\frac{1}{q} + \frac{1}{q'} = 1$, so that weak solutions of~\eqref{e:pde1} are defined in the usual distributional sense. In fact, the  weak solutions constructed in~\cite{MS} also enjoy some additional property, we refer to~\cite{MS} for the precise statements. This result has been extended by the same authors in~\cite{MS2} to the borderline case $q=1$, yielding a continuous and bounded vector field belonging to all spaces $C \left( \TTT ; W^{1,p}(\R^n)\right)$ with $1\leq p < n-1$. In~\cite{MS3} the authors show that the same result holds replacing the condition~\eqref{e:soglia1} by the following one:
\begin{equation}\label{e:soglia2}
\frac{1}{p} + \frac{1}{q} > 1 + \frac{1}{n} \,.
\end{equation}
Let us briefly mention that, under the opposite inequality 
\begin{equation}\label{e:soglia3}
\frac{1}{p} + \frac{1}{q} < 1 + \frac{1}{n} \,,
\end{equation}
the Sobolev conjugate exponent $p^*$ of $p$ is larger than the H\"older conjugate exponent $q'$ of $q$, that is,
$$
\frac{1}{p^*} = \frac{n-p}{np} \stackrel{\eqref{e:soglia3}}{<} 1 - \frac{1}{q} = \frac{1}{q'} \,,
$$
making it is possible to define weak solutions of~\eqref{e:pde1} in the usual distributional sense, as the product $\bobb u$ turns out to be integrable. 

Observe that condition~\eqref{e:soglia3} is strictly weaker than condition~\eqref{e:soglia0}. 
Moreover, as already observed in~\S~\ref{s:mainres}, our Theorem~\ref{T:main} precisely shows 
that in the case $p>n$ (and correspondingly, $q=1$), under the further assumption of
triviality of forward-backward integral curves, condition~\eqref{e:soglia3} is enough to obtain 
uniqueness of weak solutions. The following open questions are therefore extremely natural:

\begin{question}\label{q:b}
For a vector field~$\bobb$ satisfying~\eqref{e:bobb} with $p>n$ are
forward-backward integral curves trivial for almost every starting point? 
That is, is the assumption on the forward-backward integral curves in
Theorem~\ref{T:main} redundant? 
\end{question}

\begin{question}\label{q:q}
Let $p$ and $q$ be integrability indexes satisfying~\eqref{e:soglia3}. Let $\bobb$ be a vector field as in~\eqref{e:bobb}. Does uniqueness of weak solutions to the continuity equation~\eqref{e:pde1} hold in the class $u \in L^\infty \left( \TTT ; L^q(\R^n) \right)$?
\end{question}

We also refer to~\cite{BCD} and~\cite{CL} (both subsequent to the submission of the present paper) for some more recent progress on this topic. In~\cite{BCD} nonuniqueness of {\em positive} weak solutions to the continuity equation~\eqref{e:pde1} is shown under the condition~\eqref{e:soglia2} and this is used in order to show nonuniqueness of a.e.~Lagrangian flows, i.e.~of solutions of~\eqref{e:ode1} for which the near incompressibility condition~\eqref{e:inc} is not required. In~\cite{CL} nonuniqueness of weak solutions to the continuity equation~\eqref{e:pde1} in the class $u \in L^1 \left( \TTT ; L^q(\R^n) \right)$ is shown for $p$ and $q$ integrability indexes not satisfying the DiPerna-Lions integrability assumption~\eqref{e:soglia0}; notice in~\cite{CL} the $L^1$ dependence on time, in contrast to the $L^\infty$ dependence assumed in the above discussion.

\subsection{Strategy of the proof of Theorem~\ref{T:main}}\label{s:SP}

Let us very briefly describe the strategy of proof of~Theorem~\ref{T:main}. For full details we refer to~\cite{CC}. Given a weak solution $u \in L^{1}_{\mathrm{loc}} \left(\TTT\times\R^{n}\right)$  of~\eqref{e:pde1}, it is enough to prove that $u$ is transported by the (unique) regular Lagrangian flow~$\RLF$ associated to~$\bobb$. The natural strategy would be to change variable in the weak formulation~\eqref{e:weakform} using the regular Lagrangian flow, in order to obtain a weak formulation of~\eqref{e:pde1} in Lagrangian variables. This procedure can also be seen as a disintegration of the equation on the integral curves of the vector field, an approach reminiscent of those in~\cite{ABC,BB}. However, due to the lack of Lipschitz regularity of the flow with respect to the space variable (see for instance~\cite{J2,ACM} for examples of Sobolev vector fields with associated regular Lagrangian flow that is not continuous or not Sobolev), we do not obtain yet the distributional formulation of~\eqref{e:ode1}, since we do not obtain the full class of test functions after the change of variable. 

Nevertheless, the Lagrangian theory developed in~\cite{CDL} guarantees some regularity of the regular Lagrangian flow ``on large sets''. This in turn guarantees that the test function we obtain after the change of variable is Lipschitz continuous on a ``large flow-tube'', although with a possibly large Lipschitz constant. We thus need to extend this function to a globally Lipschitz function, and to control the error 
due to the fact that we replace the test function on the (small) set outside the large flow-tube.
The key observation is that, in order to control such an error in the weak formulation in Lagrangian variable,
only the Lipschitz constant of the test function along integral curves is relevant, not the global Lipschitz constant. This is the reason why a directional Lipschitz extension lemma is the key point of our strategy.

\subsection{Directional Lipschitz extension lemma}

In order to handle properly the question of the existence of an extension with the property just mentioned at the end of the last paragraph, it is useful to adopt a more geometric point of view. Let us informally define a function to be directionally Lipschitz continuous if its composition with each integral curve $t \mapsto (t,\RLF(t,\boxx))$ in the regular Lagrangian flow is Lipschitz continuous (see however Definition~\ref{d:multiflow} for the rigorous definition). For the sake of comprehension, let us give an informal version of Theorem~\ref{T:LipschitzEstention}:

\medskip

\noindent {\em Let be given a function defined on a flow-tube, and assume that such a function is Lipschitz continuous and directionally Lipschitz continuous. Then there exists an extension of the function to the entire space which is Lipschitz continuous and directionally Lipschitz continuous. Moreover, the directional Lipschitz constant of the extension can be estimated in terms of the directional Lipschitz constant of the original function.} 

\medskip

From a more abstract point of view, the above result can be seen as a simultaneous Lipschitz extension lemma with respect to two distances, the Euclidean one and the geodesic distance on integral curves of the vector field. Such a result is non-trivial, as these two distances are non-equivalent, and in fact, the geodesic distance is degenerate, since it can assume the value $+\infty$ in case of points belonging to different integral curves. To the best of our knowledge no extension lemmas in similar settings are available in the literature.

\medskip 

The strategy of proof of the directional Lipschitz extension theorem uses an interpolation between the two distances which is customary in the calculus of variations. We approximate the (degenerate) geodesic distance by distances that are equivalent to the Euclidean one, but weight more and more displacements that are transversal to the integral curves (see Definition~\ref{D:distances}). The key lemma is that, for a given Lipschitz continuous function defined on a flow-tube, the Lipschitz constant for the approximated distances converges to the Lipschitz constant for the geodesic distance (see Theorem~\ref{T:corvengenceLipConst}). This is non-trivial, as in general one would only have the semicontinuity of the Lipschitz constant, and exactly in the proof of this fact the continuity of the vector field plays a key role. 

The actual proof is in fact much more delicate. As we show in Lemma~\ref{l:compar}, for a function which is globally defined and Lipschitz continuous for the Euclidean distance, directional Lipschitz continuity in the provisional sense above (that is, along integral curves in the regular Lagrangian flow) is equivalent to Lipschitz continuity of the function when composed with {\em any} integral curve (not necessarily from the selection provided by the regular Lagrangian flow), and even more, when composed with any forward-backward integral curve (that is, Lipschitz curves that can travel forward in time, with velocity given by the vector field, or backward in time, with velocity given by minus the vector field, see Definition~\ref{d:fb}). In fact, the limit degenerate distance provided by the interpolation procedure above precisely encodes the Lipschitz continuity along all forward-backward integral curves, see Lemma~\ref{L:equivalence}. 

Henceforth, in order for a directional Lipschitz extension to exist, it is necessary for the function before the extension to be Lipschitz continuous along all such forward-backward integral curves that intersect the flow-tube domain of the function. This would follow from the triviality (as in Definition~\ref{d:fb}) of forward-backward integral curves for almost every initial point, that is, from the condition that the image of a foward-backward integral curve is contained in the image of a (standard) integral curve. As we are currently not able to prove the almost-everywhere triviality of forward-backward curves, we have to assume it in our main result. 

This fact was overseen in our announcement in~\cite{CC}, in which we claimed a stronger version of our main result, version which is shown to be incorrect by the example in~\cite{MS2}.

\subsection*{Acknowledgments} This work was started during a visit of LC at the University of Basel and carried on during a visit of GC at the University of Padova as a Visiting Scientist. The authors gratefully acknowledge the support and the hospitality of both institutions. LC is a member of the Gruppo Nazionale per l'Analisi Matematica, la Probabilit\`a e le loro Applicazioni (GNAMPA) of the Istituto Nazionale di Alta Matematica (INdAM). GC is partially supported by the ERC Starting Grant 676675 FLIRT. The authors gratefully acknowledge several useful discussions with P.~Bonicatto, P.-E.~Jabin, E.~Marconi, S.~Modena, R.~Monti, L.~Sz\'ekelyhidi, and D.~Vittone.

\section{Integral curves, multi-flows, and a monotone family of distances}
\label{S:not}

Since our setting and argument have a geometric character, we work in~\S\S~\ref{S:not}--\ref{S:directionalV} in the Euclidean\break $N$-dimensional space. However, the first space coordinate plays a special role, therefore we use the notation 
$$
\boxx=(x_{0},\widehat\boxx) \in \R^N = \R \times \R^n\,, 
\text{ where $x_{0}\in\R$ and $\widehat\boxx\in\R^{N-1}=\R^{n}$.}
$$ 
We consider a bounded vector field 
$$
\bov = (v_{0},\widehat\bov) : [0,T]\times\R^n\subset\R^{N}\to\R^{N}\,,
\text{ where $v_{0}\in\R$ and $\widehat\bov\in\R^{N-1}=\R^{n}$}
$$
and we assume that 
$$
\text{the first component of $\bov$ is identically $1$, that is, $v_{0}\equiv1$ with our notation.}
$$
As customary, we say that $\bog : [0,S_{\bog}] \to [0,T]\times\R^n$ is an integral curve of $\bov$ if
$$
\dot \bog(s) =\bov (\bog(s)) \qquad \text{in $\mathcal D' ((0,S_{\bog}))$.}
$$
Since we are assuming that $\bov$ is bounded, integral curves of $\bov$ are Lipschitz continuous.

\begin{definition}
\label{d:fb}
We say that $\bog :  [0,S_{\bog}]  \to [0,T]\times\R^n$ is a forward-backward integral curve of $\bov$ if it is Lipschitz continuous and
\begin{equation}\label{e:d:fb}
\dot \bog(s) =\bov (\bog(s)) \qquad \text{or} \qquad \dot \bog(s) =-\bov (\bog(s))
\qquad\text{for $\Ll^{1}$-a.e.~$s \in  [0,S_{\bog}] $.}
\end{equation}
We say that a forward-backward integral curve $\bog :  [0,S_{\bog}]  \to [0,T]\times\R^n$ is trivial
if there exists an integral curve $\widetilde\bog :  [0,S_{\widetilde\bog}]  \to [0,T]\times\R^n$
such that $\Ima \bog \subset \Ima \widetilde\bog$.
\end{definition}
By the area formula, the image of a negligible set via the map $\gamma_0 : [0,S_{\bog}]  \to [0,T]$ is
negligible. Therefore, the definition in~\eqref{e:d:fb} is well-posed. 
We moreover observe that, if $\bog : \Ig{ }= [0,S_{\bog}]  \to [0,T]\times\R^n$ is a forward-backward integral curve of $\bov$ 
with $\bog(0)=\boxx$ and $\bog(S_{\bog})=\boy$, then
\[
 \boy^{ }- \boxx^{ }
 = \int_{\Ig{ }} \dot\bog(s)  ds  = \int_{\Ig{ }} \dot\gamma_{0}^{ }(s) \bov(\bog^{ }(s))ds  \ .
\]

We also introduce a continuity assumption that we will require on the vector field for most of our later statements and proofs. 

\begin{definition}
\label{D:cont}
We say that a Borel vector field $\bov:[0,T]\times\R^n\to\R^{N}$ is \emph{$\widehat\boxx$-uniformly continuous} if $\bov$ is uniformly continuous in the variable $\widehat\boxx$ with a modulus of continuity which is uniform in $x_{0}$, namely
\[
\forall \rho>0
\qquad
\exists \delta=\delta(\rho)>0
\quad: \quad\text{for $\Ll^{1}$-a.e.~$x_{0}$}\quad
|\bov(x_{0},\widehat\boxx )-\bov(x_{0},\widehat\boxx')|\leq \rho
\quad\text{if $|\widehat\boxx-\widehat\boxx'|\leq \delta$,}
\] 
where $(x_0,\widehat\boxx), (x_0,\widehat\boxx') \in [0,T]\times\R^n$. 
We call the value $\delta=\delta(\rho)$ \emph{modulus of uniform continuity} associated to $\rho>0$.
\end{definition}

\begin{definition}[Multi-flow, flow-tube]
\label{d:multiflow}
Consider a Borel map $\Zz : \TTT\times \Amf\to  \TTT\times \R^{n} $, where~$\Amf$ is a space of parameters, and a Borel vector field $\bov: [0,T]\times\R^n\to\R^{N}$.
We say that $\Zz$ is a \emph{multi-flow} of $\bov$ if the $0$-component of $\Zz( s,\amf)$ is precisely $s$, that is
\[
Z_0(s,\amf)=s\qquad \text{for every~$\amf\in\Amf$ and every $0\leq s\leq T$,}
\]
and moreover
\[
\ddz \Zz( s,\amf)=\bov(\Zz(s,\amf))
\qquad \text{in $\mathcal D' ((0,T))$ for every~$\amf\in\Amf$.}
\]
Given a multi-flow $\Zz$, we call a set $\mcK\subset\R^{N}$ 
a \emph{flow-tube} (associated to $\bov$) if there is $A\subset\Amf$ such that
\[
\mcK= \Zz \left( \TTT \times A \right) \subset \TTT\times\R^{n} \,. 
\]
Throughout this paper, for simplicity we assume that \emph{the image of $\Zz$ is dense in $\TTT\times\R^{n}$}.
\end{definition}

The density of the image of $\Zz$ is a natural assumption in the setting of our application, since we will
consider as multi-flow the regular Lagrangian flow of a vector field defined on $[0,T]\times\R^n$. However, 
for some of our results below we do not need this density assumption (see for instance the comments preceding Lemma~\ref{L:distanceisd}) and we can moreover assume the vector field to be defined on a subset of $[0,T]\times\R^n$ (by making suitable changes to statements and proofs).

\subsection{A family of distances}
\label{Ss:distances}
Given a multi-flow $\Zz$ of a vector field $\bov$, we introduce a family $d_{\lambda}$ of distances on $[0,T]\times\R^{n}$, where the parameter $\lambda\in(0,1]$, and we define their limit $d_{0}$.

\begin{definition}[Competitors]
\label{D:competitors}
Let $\Zz:\TTT\times \Amf\to \TTT\times \R^{n}$ be a {multi-flow} of a vector field $\bov$.
We define \emph{competitor} joining $\boxx$ and $\boy$ a Lipschitz curve $\bog:\Ig{} \to\TTT\times\R^{n}$ defined on 
\[
\Ig{}=[0,S_{\bog}]=\bigcup_{i=0}^{I} [S_{\bog}^{i},S_{\bog}^{i+1}]
\quad\text{where} \quad S_{\bog}^{0}=0 \leq \dots\leq  S_{\bog}^{I+1}=S_{\bog}\ ,
\]
such that 
\[
\text{either\quad$\bog(0)=\boxx$ \;and\; $\bog(S_{\bog})=\boy$\qquad or\qquad$\bog(S_{\bog})=\boxx$ \;and\; $\bog(0)=\boy$}
 \]
and such that for every sub-interval $[S_{\bog}^{i},S_{\bog}^{i+1}]$ 
\begin{enumerate}
\item either $\gamma_{0}$ is constant on the sub-interval $[S_{\bog}^{i},S_{\bog}^{i+1}]$ and $|\dot\bog|\leq \norm{\bov}_{\infty}$,
\item or $\bog(s)=\Zz(\gamma_{0}(s),\amf_{i})$ for all $s\in[S_{\bog}^{i},S_{\bog}^{i+1}]$, for some $\amf_{i}\in\Amf$, and $\dot \gamma_{0} = 1$ in $\mathcal D' ((S_{\bog}^{i},S_{\bog}^{i+1}))$, 
\item  or $\bog(s)=\Zz(\gamma_{0}(s),\amf_{i})$ for all $s\in[S_{\bog}^{i},S_{\bog}^{i+1}]$, for some $\amf_{i}\in\Amf$, and $\dot \gamma_{0} =- 1$ in $\mathcal D' ((S_{\bog}^{i},S_{\bog}^{i+1}))$.
\end{enumerate}
We denote by $\Ignp{}$ the union of the intervals of the first kind, and by $\Igp{}$ the union of the intervals of the second and third kind, so that $\Ig{}=\Ignp{}\cup\Igp{}$.\end{definition}

In general, the class of all competitors joining $\boxx$ and $\boy$ is not closed under local uniform convergence.

\begin{definition}[Distances]
\label{D:distances}
For $0<\lambda\leq 1$ we define the distance $d_{\lambda}  : \left(\TTT \times \R ^{n}\right)^{2} \to [0,+\infty)$ by
\begin{align}
\label{E:distances}
&d_{\lambda}(\boxx,\boy)= \inf \Big\{\mass{\bog}\ | \ \text{$\bog$  competitor joining $\boxx$ and $\boy$}\Big\},
\\
\label{E:masses}
&\mass{\bog}=\Ll^{1}\left(\Igp{}\right) + \frac{1}{\lambda}\,\Ll^{1}\left(\Ignp{}\right)
\ .
\end{align}
Lemma~\ref{L:fwegeg} below guarantees that we can define
$d_0:\left(\TTT \times \R ^{n}\right)^{2}  \to [0,+\infty]$ as
\begin{equation*}
d_{0}(\boxx,\boy)=\lim_{\lambda\downarrow0}d_{\lambda}(\boxx,\boy)=\sup_{0<\lambda\leq 1}d_{\lambda}(\boxx,\boy)\  ,
\end{equation*}
and $d_{0}$ turns out to be a possibly degenerate distance on $[0,T]\times\R^n$. 
\end{definition}

We prove in Lemmas~\ref{L:distanceisd} and~\ref{L:fwegeg} below that the 
distances $d_{\lambda}$ introduced above are well defined lower semicontinuous distances for all $0 \leq \lambda \leq 1$. Moreover, we prove in Lemmas~\ref{L:eqDist1} and~\ref{L:equivalenceDist} below the following characterizations of the distance $d_0$:
\begin{align*}
 d_{0}&(\boxx,\boy)
 \equiv 
  \lim_{{k\to+\infty}}\inf\Big\{\Ll^{1}\left(\Igp{}\right)\ | \ \bog\text{ competitor joining $\boxx$ and $\boy$ with $\Ll^{1}(\Ignp{})<\sfrac{1}{k}$}\Big\} 
  \\
&\equiv 
  \min\Big\{\overline S\ | \ \text{there is a forward-backward integral curve $\overline\bog:[0,\overline S]\to [0,T]\times\R^n$ of $\bov$ joining $\boxx$ and $\boy$}\Big\} 
  \ .
\end{align*}

In particular, $d_{0}(\boxx,\boy)= +\infty $ if the infimum in the first characterization is
taken over an empty set after some $\overline k\in\N$, which happens when $  \Ll^{1}(\Ignp{})>c>0$ for all competitors $\bog$ joining $\boxx$ and $\boy$.
This corresponds in the second characterization to the case when no forward-backward integral curve of $\bov$ joins $\boxx$ and $\boy$.

\medskip 

The next lemma establishes the equivalence between the distances $d_{\lambda}$ with $0<\lambda\leq 1$ and the Euclidean distance, with an upper bound in the equivalence which degenerates as $\lambda\downarrow 0$. We exploit the density of $\Ima \Zz$ in order to have enough competitors in the definition of $d_{\lambda}(\boxx,\boy)$, a property which is necessary for the upper bound in~\eqref{E:rgqgrqg}. 

\begin{lemma}
\label{L:distanceisd}
The function $d_{\lambda}$ is a distance for all $0<\lambda\leq1$ and
\begin{equation}
\label{E:rgqgrqg}
\frac{|\boy-\boxx|}{ \norm{\bov}_{\infty} } \leq   d_{\lambda}(\boxx,\boy) \leq   \frac{ {3} }{\lambda} \left|\boy-\boxx\right|
\qquad
\forall \, \boxx, \boy \in \TTT\times \R^n \,.
\end{equation}
In particular, $d_{\lambda}$ is a continuous function and is equivalent to the Euclidean distance.
\end{lemma}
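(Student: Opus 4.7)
The plan is to establish the three assertions---the distance axioms, the two-sided estimate~\eqref{E:rgqgrqg}, and continuity---in this order, with the upper bound in~\eqref{E:rgqgrqg} being the only step requiring non-trivial work.

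For the lower bound I would first observe that every competitor $\bog$ satisfies $|\dot\bog|\leq\norm{\bov}_\infty$ almost everywhere: this is built into the definition on type~(1) pieces, and on type~(2) and type~(3) pieces $\dot\bog=\pm\bov\circ\bog$. Integrating and using $\lambda\leq 1$,
\[
|\boy-\boxx| \;\leq\; \int_0^{S_\bog}|\dot\bog|\,ds \;\leq\; \norm{\bov}_\infty\bigl(\Ll^1(\Igp{})+\Ll^1(\Ignp{})\bigr) \;\leq\; \norm{\bov}_\infty\,\mass{\bog},
\]
and passing to the infimum over competitors gives the left inequality in~\eqref{E:rgqgrqg}.

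For the upper bound assume without loss of generality $y_0\geq x_0$ (otherwise use type~(3) in place of type~(2)). Fix $\epsilon>0$. By density of $\Ima\Zz$ I would choose $(s_1,\amf_1)\in\TTT\times\Amf$ with $|\Zz(s_1,\amf_1)-\boxx|<\epsilon$, whence $|s_1-x_0|<\epsilon$ and $|\widehat Z(s_1,\amf_1)-\widehat\boxx|<\epsilon$. Since $s\mapsto\Zz(s,\amf_1)$ is $\norm{\bov}_\infty$-Lipschitz, the Lipschitz bound on its spatial part yields $|\widehat Z(x_0,\amf_1)-\widehat\boxx|\leq(1+\norm{\bov}_\infty)\epsilon$. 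I then construct a three-piece competitor: first a type~(1) piece from $\boxx$ to $\Zz(x_0,\amf_1)$, travelled at speed $\norm{\bov}_\infty$ so as to have parameter length $|\widehat Z(x_0,\amf_1)-\widehat\boxx|/\norm{\bov}_\infty$; then a type~(2) piece from $\Zz(x_0,\amf_1)$ to $\Zz(y_0,\amf_1)$ of parameter length $y_0-x_0$; finally a type~(1) piece from $\Zz(y_0,\amf_1)$ to $\boy$ of parameter length $|\widehat Z(y_0,\amf_1)-\widehat\boy|/\norm{\bov}_\infty$. The crucial bound, obtained by inserting $\widehat Z(x_0,\amf_1)$ and $\widehat\boxx$ and using the flow's Lipschitz constant, is
\[
|\widehat Z(y_0,\amf_1)-\widehat\boy| \;\leq\; \norm{\bov}_\infty(y_0-x_0)+|\widehat\boy-\widehat\boxx|+(1+\norm{\bov}_\infty)\epsilon.
\]
Summing the contributions and using $\norm{\bov}_\infty\geq 1$, $\lambda\leq 1$, and $2(y_0-x_0)+|\widehat\boy-\widehat\boxx|\leq 3|\boy-\boxx|$ yields $\mass{\bog}\leq (3/\lambda)|\boy-\boxx|+C\epsilon/\lambda$; letting $\epsilon\downarrow 0$ gives the right inequality in~\eqref{E:rgqgrqg}.

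The distance axioms are essentially free: non-negativity and symmetry are immediate from the orientation-free definition of competitor; $d_\lambda(\boxx,\boxx)=0$ is realised by the trivial constant curve; the triangle inequality follows by concatenating a competitor from $\boxx$ to $\boz$ with one from $\boz$ to $\boy$ (reversing orientation if needed), since the piecewise structure is preserved and the mass is additive; non-degeneracy $d_\lambda(\boxx,\boy)=0\Rightarrow\boxx=\boy$ is a direct consequence of the lower bound. Continuity of $d_\lambda$ with respect to the Euclidean distance is then a direct consequence of the upper bound and the triangle inequality: $|d_\lambda(\boxx,\boy)-d_\lambda(\boxx',\boy')|\leq d_\lambda(\boxx,\boxx')+d_\lambda(\boy,\boy')\leq (3/\lambda)(|\boxx-\boxx'|+|\boy-\boy'|)$. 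The main obstacle is the upper bound: density alone does not supply a single flow trajectory passing close to both $\boxx$ at time $x_0$ and $\boy$ at time $y_0$, so the terminal type~(1) correction is unavoidable, and it is precisely the bound $\norm{\bov}_\infty(y_0-x_0)$ on the flow's spatial displacement over $[x_0,y_0]$ that produces the constant $3$.
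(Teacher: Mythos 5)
Your proof is correct and follows essentially the same route as the paper's: the lower bound by integrating $|\dot\bog|\leq\norm{\bov}_\infty$ over a competitor, the upper bound via the same three-piece competitor (horizontal junction, flow piece, horizontal junction) built from the density of $\Ima\Zz$, and the triangle inequality by concatenation of near-optimal competitors. Your treatment of the density step is marginally more careful than the paper's (you pass from a nearby point $\Zz(s_1,\amf_1)$ to $\Zz(x_0,\amf_1)$ via the $\norm{\bov}_\infty$-Lipschitz bound on the trajectory, rather than assuming a nearby point at time $x_0$ directly), but this is a refinement, not a different argument.
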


\begin{proof}
The distance $d_{\lambda}$ is real valued since a competitor joining $\boxx$ and $\boy$ is given by a suitable parameterization of the path composed, for any $\amf\in\Amf$, by 
the segment joining $\boxx$ and $\Zz(x_{0},\amf)$, followed by $\Zz(s,\amf)$ for $s$ between $x_{0}$ and $y_{0}$, and 
finally the segment joining $\boy$ and $\Zz(y_{0},\amf)$.

Let us prove that $d_{\lambda}$ satisfies the triangular inequality.
Let $\boxx^{1},\,\boxx^{2},\,\boxx^{3}\in[0,T]\times\R^{n}$ and $\varepsilon>0$.
Definition~\ref{E:distances} guarantees for all $\varepsilon>0$ the existence of
competitors $\bog^{12}:[0,S_{1}]\to[0,T]\times\R^{n}$ and $\bog^{23}:[0,S_{2}]\to[0,T]\times\R^{n}$ such that
\[
d_{\lambda}(\boxx^{1},\boxx^{2})>\mass{\bog^{12}} -\varepsilon
\quad\text{and}\quad
d_{\lambda}(\boxx^{2},\boxx^{3})>\mass{\bog^{23}}-\varepsilon
\ .
\]
Up to exchanging $\bog^{12}(s)$ with $\bog^{12}(S_{1}-s)$ we can assume that $\bog^{12}(0)=\boxx$ and $\bog^{12}(S_{1})=\boy$, and similarly that $\bog^{23}(0)=\boy$ and $\bog^{23}(S_{2})=\boz$.
Therefore, the curve
\[
\bog(s)=\begin{cases}
\bog^{12}(s) &\text{if $0\leq s\leq S_{1}$}
\\
\bog^{23}(s-S_{1}) &\text{if $S_{1}\leq s\leq S_{2}$}
\end{cases}
\]
is a competitor for $d_{\lambda}(\boxx^{1},\boxx^{3})$ and thus
\[
d_{\lambda}(\boxx^{1},\boxx^{3})\leq \mass{\bog}=\mass{\bog^{12}}+\mass{\bog^{23}}<d_{\lambda}(\boxx^{1},\boxx^{2})+d_{\lambda}(\boxx^{2},\boxx^{3})+2\varepsilon \ .
\]
As $\varepsilon>0$ is chosen arbitrarily this proves that $d_\lambda$ satisfies the triangular inequality.

Non-negativity, symmetry, identity of indiscernibles are clear, therefore $d_{\lambda}$ is a distance. 

The fact that the distance $d_{\lambda}$ is equivalent to the Euclidean distance 
and the continuity as a function of two variables
follow by estimate~\eqref{E:rgqgrqg},
which we now show.
We prove separately the estimates from above and from below.
We first show the estimate from below. By Definition~\ref{D:distances} each competitor $\bog$ satisfies $|\dot\bog|\leq \norm{\bov}_{\infty}$. Integrating this inequality and using that $0<\lambda\leq 1$ we get
\[
|\boy-\boxx| \leq \int_{I_{\bog}}|\dot\bog| \leq  \norm{\bov}_{\infty} \Ll^{1}(I_{\bog})\leq \norm{\bov}_{\infty}\mass{\bog} \ .
\]
Taking the infimum over competitors $\bog$ we obtain $ |\boy-\boxx| \leq \norm{\bov}_{\infty}d_{\lambda}(\boxx,\boy)$.
Regarding the estimate from above, we need the density of the image of $\Zz$. 
Let $\varepsilon>0$ and consider any point $\Zz(x_{0},\amf)$ such that $|\Zz(x_{0},\amf)-\boxx|\leq \varepsilon$, which is possible by the density assumption. A competitor is provided by the path composed by the horizontal segment joining $ \Zz(x_{0},\amf)$ and $\boxx$, at speed $\norm{\bov}_{\infty}$, then following $\Zz(x_{0}+s,\amf)$ for $s\in[0,y_{0}-x_{0}]$ and finally by the horizontal segment joining $\Zz(y_{0},\amf)$ and $\boy$, at speed $\norm{\bov}_{\infty}$: if e.g.~we are considering $y_{0}\geq x_{0}$ then
\[
 \bog (s)=
\begin{cases}
 \boxx+ \frac{s}{S_{\bog }^{1}} ( \Zz(x_{0},\amf)-\boxx)    &\text{if $0\leq s\leq S_{\bog }^{1} :=\sfrac{| \Zz(x_{0},\amf)-\boxx |}{\norm{\bov}_{\infty}}$}
\\
\Zz(x_{0}+s-S_{\bog}^{1},\amf) &\text{if $ S_{\bog}^{1} \leq s\leq  S_{\bog}^{2}:=S_{\bog }^{1} +|y_{0} -x_{0}|$}
\\
\Zz(y_{0} ,\amf) + \frac{s-S_{\bog }^{2}}{S_{\bog^{ }} -S_{\bog }^{2}} ( \boy-\Zz(y_{0} ,\amf))
 &\text{if $S_{\bog}^{2}\leq s\leq S_{\bog }:=S_{\bog}^{2}+ \sfrac{| \boy-\Zz(y_{0} ,\amf) |}{\norm{\bov}_{\infty}}$ .}
\end{cases}
\]
The explicit computation on this competitor proves the upper bound
\label{item:equivalenceEuclidean}
\begin{align*}
d_{\lambda}(\boxx,\boy) &\leq
 |y_{0}-x_{0}|
+\frac{| \Zz(x_{0},\amf)-\boxx |+| \boy-\Zz(y_{0} ,\amf)|}{\lambda\norm{\bov}_{\infty}}
 \\
 &\leq |y_{0}-x_{0}|+\frac{1}{\lambda\norm{\bov}_{\infty}} \left(2\varepsilon+\norm{\bov}_{\infty} |y_{0}-x_{0}|+ |\widehat\boy-\widehat\boxx|\right)
\end{align*}
since $| \boy-\Zz(y_{0} ,\amf)|\leq| \widehat\boy- \widehat\boxx |+|\widehat\Zz(x_{0} ,\amf)-\widehat\Zz(y_{0} ,\amf)|+|\widehat\boxx-\widehat\Zz(x_{0} ,\amf)|\leq  |\widehat\boy-\widehat\boxx|+\norm{\bov}_{\infty} |y_{0}-x_{0}|+\varepsilon$. Thus
\begin{align*}
d_{\lambda}(\boxx,\boy) \leq  \frac{ {3}}{\lambda} \left|\boy-\boxx\right|
\end{align*}
as $\varepsilon$ is arbitrarily small and  $\norm{\bov}_{\infty}\geq 1$.
\end{proof}

We now state and prove simple properties of the distances $d_{\lambda}$ for $0<\lambda\leq 1$.
We also prove that the definition of $d_0$ in Definition~\ref{D:distances} is well posed and 
we show some properties of $d_0$.

\begin{lemma}
\label{L:fwegeg}
The distances in Definition~\ref{D:distances} satisfy the following properties.
\begin{enumerate}
\item 
\label{ite:distanceconvergence}
The distances $d_{\lambda}$ increase monotonically when $\lambda\downarrow 0$. We can therefore define 
\begin{equation*}
d_{0}(\boxx,\boy)=\lim_{\lambda\downarrow0}d_{\lambda}(\boxx,\boy)=\sup_{0<\lambda\leq 1}d_{\lambda}(\boxx,\boy)\ ,\qquad d_{0}\in [0,+\infty].
\end{equation*}
Moreover, $d_{0}$ is a lower semicontinuous distance, which is possibly degenerate.
\item 
\label{ite:qqreoihr}
For $0\leq\lambda\leq 1$, the inequality $\quad\left|y_{0}-x_{0}\right| \leq  d_{\lambda}(\boxx,\boy)\quad$ holds.
\item
\label{ite:smallchar}
If $\boxx=\Zz(x_{0},\amf)$ and $\boy=\Zz(y_{0},\amf)$ for some $\amf\in\Amf$ we have
\begin{align*}
 &d_{0}(\boxx,\boy)
 = d_{\lambda}(\boxx,\boy) =
\left|y_{0}-x_{0}\right| 
\qquad \text{ for all $0<\lambda\leq 1$.}
\end{align*}
\end{enumerate}
\end{lemma}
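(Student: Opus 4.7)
All three parts follow from careful inspection of the functional $\mass{\bog}=\Ll^{1}(\Igp{})+\tfrac{1}{\lambda}\Ll^{1}(\Ignp{})$ appearing in Definition~\ref{D:distances}. My plan is to address the parts in the order (2)$\to$(1)$\to$(3): the elementary inequality in (2) is needed to know that $d_0$ vanishes only on the diagonal, and part (3) is then immediate.

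\textbf{Part (2).} For any competitor $\bog:I_{\bog}\to \TTT\times\R^n$, the function $\gamma_0$ is either constant on each sub-interval of $I_\bog^{\parallel\complement}$ or satisfies $\dot\gamma_0=\pm 1$ on each sub-interval of $I_\bog^{\parallel}$; hence $|\dot\gamma_0|\leq \mathbf 1_{I_\bog^{\parallel}}$ almost everywhere. Integrating and using $0<\lambda\leq 1$, I would obtain
\[
|y_0-x_0| \;=\; \bigl|\gamma_0(S_{\bog})-\gamma_0(0)\bigr|\;\leq\; \int_{I_{\bog}}|\dot\gamma_0|\,ds \;=\; \Ll^{1}(\Igp{})\;\leq\;\mass{\bog}.
\]
Taking the infimum over competitors gives $|y_0-x_0|\leq d_\lambda(\boxx,\boy)$ for $0<\lambda\leq 1$, and then the case $\lambda=0$ will come for free from the monotonicity proved in part (1).

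\textbf{Part (1).} Fix any competitor $\bog$: the mass $\mass{\bog}$ depends on $\lambda$ only through the factor $1/\lambda$ in front of $\Ll^{1}(\Ignp{})$, and is therefore nondecreasing as $\lambda\downarrow 0$. Since the class of competitors joining $\boxx$ and $\boy$ does not depend on $\lambda$, the infimum $d_\lambda(\boxx,\boy)$ is itself nondecreasing as $\lambda\downarrow 0$. This justifies writing $d_0=\lim_{\lambda\downarrow 0}d_\lambda=\sup_{0<\lambda\leq 1}d_\lambda\in[0,+\infty]$. Symmetry and non-negativity are inherited from the $d_\lambda$, and the triangle inequality passes to the pointwise supremum of distances. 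Identity of indiscernibles is the only delicate point: if $d_0(\boxx,\boy)=0$ then $d_\lambda(\boxx,\boy)=0$ for some (indeed all) $\lambda\in(0,1]$ and Lemma~\ref{L:distanceisd} gives $\boxx=\boy$; alternatively part (2) already yields $x_0=y_0$, and one then uses $d_\lambda\geq |\widehat\boy-\widehat\boxx|/\norm{\bov}_\infty$ (or again Lemma~\ref{L:distanceisd}) to conclude. Lower semicontinuity of $d_0$ is automatic as a supremum of continuous functions $d_\lambda$ (continuity from Lemma~\ref{L:distanceisd}). The possible degeneracy is exactly the case $d_0(\boxx,\boy)=+\infty$, which is allowed under the phrasing ``possibly degenerate distance''.

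\textbf{Part (3).} When $\boxx=\Zz(x_0,\amf)$ and $\boy=\Zz(y_0,\amf)$, assume without loss of generality $x_0\leq y_0$ and consider the single-interval competitor $\bog(s)=\Zz(x_0+s,\amf)$ on $[0,y_0-x_0]$, which falls in case (2) of Definition~\ref{D:competitors}. Its interval $\Ignp{}$ is empty, so $\mass{\bog}=y_0-x_0$ independently of $\lambda$. Hence $d_\lambda(\boxx,\boy)\leq y_0-x_0$ for all $\lambda\in(0,1]$. Combined with the lower bound from part (2), this yields $d_\lambda(\boxx,\boy)=|y_0-x_0|$ for every $\lambda\in(0,1]$, and passing to the supremum gives $d_0(\boxx,\boy)=|y_0-x_0|$ as well.

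\textbf{Main obstacle.} None of the steps looks truly hard; the only place where one must pay attention is part (1), in order to verify that identity of indiscernibles is preserved in the limit (where $d_0$ may take the value $+\infty$) and to track carefully which properties pass from the family $\{d_\lambda\}_{\lambda>0}$ to their pointwise supremum $d_0$ without requiring local uniform convergence, which the paragraph preceding Definition~\ref{D:distances} warns us not to expect.
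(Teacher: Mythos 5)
Your proof is correct and follows essentially the same route as the paper's: monotonicity of $\mass{\bog}$ in $\lambda$ for Part~(1), integration of $\dot\gamma_0\in\{0,\pm1\}$ for Part~(2), and the single competitor $s\mapsto\Zz(x_0+s,\amf)$ turning the chain of inequalities into equalities for Part~(3). The only (harmless) differences are the order of the parts and your more explicit verification of the distance axioms for $d_0$, which the paper compresses into the remark that a monotone increasing pointwise limit of distances is a distance.
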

\begin{proof}
Let us consider Property~\eqref{ite:distanceconvergence}.
Since the family competitors joining $\boxx$ and $\boy$ is the same independently of $\lambda$, but the weight increses as $\lambda \downarrow 0$ by~\eqref{E:distances} and~\eqref{E:masses}, we see that $d_{\lambda}(\boxx,\boy)$ increases as $\lambda\downarrow0$ and thus it admits a pointwise limit.  
Being the supremum of lower semicontinuous functions by Lemma~\ref{L:distanceisd}, the function $d_{0}$ is lower semicontinuous.
Moreover, it is a distance being a monotone increasing pointwise limit of distances (recall again Lemma~\ref{L:distanceisd}).

Property~\eqref{ite:qqreoihr} is a consequence of the fact that each 
competitor $\bog$ for $d_{\lambda}(\boxx,\boy)$
satisfies $\dot\gamma_{0}\in\{0,\pm1\}$. Indeed, together with $0<\lambda\leq 1$ this implies
\begin{equation}
\label{E:gegr}
|y_{0}-x_{0}|=\left| \int_{\Ig{}}\dot\gamma_{0} \right| \leq 
 \int_{\Ig{}}\left|\dot\gamma_{0} \right|\leq \Ll^{1}\left(\Ig{}\right)\leq \mass{\bog}\, ,
\end{equation}
and property~\eqref{ite:qqreoihr} follows by taking the infimum~\eqref{E:distances} over admissible competitors.

Concerning Property~\eqref{ite:smallchar}, the path $\Zz(s,\amf)$ joining $\boxx$ and $\boy$ is an admissible competitor for $d_{\lambda}(\boxx,\boy)$ and it satisfies $\Ig{}=\Igp{}$.
Therefore,~\eqref{E:gegr} becomes a chain of equalities for $\lambda\in(0,1]$. Using Property~\eqref{ite:distanceconvergence} this equality also holds for $\lambda={0}$.
\end{proof}

We now provide the first characterization of $d_{0}$.

\begin{lemma}
\label{L:eqDist1}
For every $\boxx,\boy \in [0,T]\times\R^n$ we have
\begin{align}
\label{E:rfienjgegrbbeqb}
 &d_{0}(\boxx,\boy)
 \equiv 
  \lim_{{k\to+\infty}}\inf\Big\{\Ll^{1}\left(\Igp{}\right)\ | \ \bog\text{ competitor joining $\boxx$ and $\boy$ with $\Ll^{1}(\Ignp{})<\sfrac{1}{k}$}\Big\} \ .
\end{align}
\end{lemma}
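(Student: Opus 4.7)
Let us denote the right-hand side of \eqref{E:rfienjgegrbbeqb} by $D(\boxx,\boy)$ and the quantity being optimized for fixed $k$ by $D_k(\boxx,\boy)$. The first observation is that if $\bog$ is admissible in the definition of $D_{k'}$ with $k'\ge k$, then $\Ll^{1}(\Ignp{})<1/k'\le 1/k$ so $\bog$ is also admissible for $D_k$; hence $k\mapsto D_k$ is non-decreasing and $D(\boxx,\boy)=\sup_k D_k(\boxx,\boy)\in[0,+\infty]$ is well defined.

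\textbf{Step 1: $d_0(\boxx,\boy)\le D(\boxx,\boy)$.} Fix $\lambda\in(0,1]$. For every $k\in\N$ and every competitor $\bog$ admissible for $D_k$ one has
\[
d_\lambda(\boxx,\boy)\ \le\ \mass{\bog}\ =\ \Ll^{1}(\Igp{})+\frac{1}{\lambda}\,\Ll^{1}(\Ignp{})\ <\ \Ll^{1}(\Igp{})+\frac{1}{\lambda k}.
\]
Taking the infimum over such $\bog$ yields $d_\lambda(\boxx,\boy)\le D_k(\boxx,\boy)+\frac{1}{\lambda k}\le D(\boxx,\boy)+\frac{1}{\lambda k}$. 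Letting $k\to\infty$ gives $d_\lambda(\boxx,\boy)\le D(\boxx,\boy)$, and then letting $\lambda\downarrow 0$ and using Lemma~\ref{L:fwegeg}\eqref{ite:distanceconvergence} gives the desired inequality.

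\textbf{Step 2: $D(\boxx,\boy)\le d_0(\boxx,\boy)$.} If $d_0(\boxx,\boy)=+\infty$ the inequality is trivial, so assume $d_0(\boxx,\boy)<+\infty$ and fix $\varepsilon>0$ and $k\in\N$. Since $d_\lambda\le d_0$, for each $\lambda\in(0,1]$ we can pick a competitor $\bog_\lambda$ joining $\boxx$ and $\boy$ with $\mass{\bog_\lambda}\le d_\lambda(\boxx,\boy)+\varepsilon\le d_0(\boxx,\boy)+\varepsilon$. The key estimate is that the $\frac{1}{\lambda}$-weight in~\eqref{E:masses} forces the transversal part to be small:
\[
\Ll^{1}(\Ignp{\lambda})\ \le\ \lambda\,\mass{\bog_\lambda}\ \le\ \lambda\,(d_0(\boxx,\boy)+\varepsilon).
\]
Choosing $\lambda<\bigl(k(d_0(\boxx,\boy)+\varepsilon)\bigr)^{-1}$ makes $\bog_\lambda$ admissible for $D_k$, and then
\[
D_k(\boxx,\boy)\ \le\ \Ll^{1}(\Igp{\lambda})\ \le\ \mass{\bog_\lambda}\ \le\ d_0(\boxx,\boy)+\varepsilon.
\]
Letting $\varepsilon\downarrow 0$ yields $D_k(\boxx,\boy)\le d_0(\boxx,\boy)$ for every $k$, and taking the supremum in $k$ concludes.

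\textbf{Expected difficulties.} This is essentially a bookkeeping argument built on the weighted structure of $\mass{\cdot}$ and on the monotonicity of $d_\lambda$ in $\lambda$; no subtle compactness or continuity of $\bov$ is invoked. The only real point of care is handling the extended-real value $+\infty$ (separating the case $d_0=+\infty$) and making sure the choice of $\lambda$ is allowed to depend on $k$ in Step~2, which is fine because we are proving a pointwise bound $D_k\le d_0$ for each $k$ and then passing to the supremum.
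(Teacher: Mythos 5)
Your proof is correct and rests on the same two estimates as the paper's: the bound $d_\lambda(\boxx,\boy) \le \Ll^{1}(\Igp{}) + \frac{1}{\lambda k}$ for competitors with $\Ll^{1}(\Ignp{})<\sfrac{1}{k}$, and the observation that the $\frac{1}{\lambda}$-weight in $\mass{\cdot}$ forces near-minimizers for $d_\lambda$ to satisfy $\Ll^{1}(\Ignp{\lambda})\le \lambda\,\mass{\bog_{\lambda}}$. The only difference is organizational: you prove the two inequalities directly, whereas the paper runs a case analysis on whether $\Ll^{1}(\Ignp{\lambda})\to 0$ along a near-optimal sequence; your Step~2 makes that case split (and the contradiction argument in the paper's Case~2) unnecessary, since the assumption $d_0(\boxx,\boy)<+\infty$ automatically yields the vanishing of the transversal part.
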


\begin{proof}
Consider a sequence $\bog^{\lambda}$ of competitors joining $\boxx$ and $\boy$ 
and such that
\[ \lim_{\lambda\to 0} \mass{\bog^{\lambda}}= \sup_{0<\lambda\leq 1}d_{\lambda}(\boxx,\boy) = d_0(\boxx,\boy) \ . 
\]
Two cases are possible, and we will prove the claimed characterization in both of them.

\noindent\underline{\sc Case 1.} If $\Ll^{1}(\Ignp{\lambda})\to 0$, up to subsequences, then we estimate from above the infimum in the right hand 
side of~\eqref{E:rfienjgegrbbeqb} by using the sequence~$\bog^{\lambda}$: this yields the ``$\geq$'' inequality
in~\eqref{E:rfienjgegrbbeqb}, since
\[
\lim_{k\to+\infty}\inf\left\{\Ll^{1}\left(\Igp{}\right)\ | \ \Ll^{1}(\Ignp{})<\sfrac{1}{k}\right\} \leq \liminf_{\lambda\to 0}\Ll^{1}\left(\Igp{\lambda}\right)
\leq  \liminf_{\lambda\to 0} \mass{\bog^{\lambda}} =\sup_{0<\lambda\leq 1}d_{\lambda}(\boxx,\boy) 
= d_0(\boxx,\boy) \ .
\]
On the other hand, if $\bog^{k}$ is any sequence of competitors joining $\boxx$ and $\boy$ with $\Ll^{1}(\Ignp{k})<\sfrac{1}{k}$ we have
\[
d_{\lambda}(\boxx,\boy) \leq \mass{\bog^{k}}=\Ll^{1}\left(\Igp{k}\right) + \frac{1}{\lambda}\,\Ll^{1}\left(\Ignp{k}\right)
\leq \Ll^{1}\left(\Igp{k}\right) + \frac{1}{k\lambda} \ ,
\]
which implies $d_{\lambda}(\boxx,\boy) \leq\liminf_{k\to\infty} \Ll^{1}\left(\Igp{k}\right) $,
and therefore
\[
d_{\lambda}(\boxx,\boy) \leq \lim_{{k\to+\infty}}\inf\Big\{\Ll^{1}\left(\Igp{}\right)\ | \ \bog\text{ competitor joining $\boxx$ and $\boy$ with $\Ll^{1}(\Ignp{})<\sfrac{1}{k}$}\Big\} \qquad\forall\lambda\in(0,1] \ .
\] 
When $\lambda\downarrow0$, by definition of $d_0$ we obtain the same inequality for $d_{0}$. This proves the ``$\leq$'' inequality in~\eqref{E:rfienjgegrbbeqb} and hence it shows the equality. 

\noindent\underline{\sc Case 2.} 
Suppose now that there is no subsequence $\lambda\downarrow0$ for which $\Ll^{1}(\Ignp{\lambda})\to 0$: then $\Ll^{1}(\Ignp{\lambda})>c>0$ for all $\lambda$.
This implies $\mass{\bog^{\lambda}}>\sfrac{c}{\lambda}$ for all $\lambda$ and therefore by~\eqref{E:distances}
and~\eqref{E:masses} 
\begin{equation}\label{E:rgrgrgrrgq}
d_{0}(\boxx,\boy)=\sup_{0<\lambda\leq 1}d_{\lambda}(\boxx,\boy)=\lim_{\lambda\downarrow 0} \mass{\bog^{\lambda}}=+\infty\ .
\end{equation}
We now show by contradiction that also the right hand side of~\eqref{E:rfienjgegrbbeqb} has the value $+\infty$.
Indeed, the right hand side~\eqref{E:rfienjgegrbbeqb} is finite only if there exists a sequence of competitors $\bog^{k}$ with $\Ll^{1}(\Ignp{k})\leq \sfrac{1}{k}$ and $\Ll^{1}(\Igp{k})$ bounded.
If this was the case, using
\[
d_{\lambda}(\boxx,\boy)\leq \mass{\bog^{k}}  \quad \forall k
\quad\quad\text{by~\eqref{E:distances}}
\]
and being $\Ll^{1}(\Ignp{k})/\lambda\leq \sfrac{1}{\lambda k}$ vanishingly small, we would have by~\eqref{E:masses}
\[
d_{\lambda}(\boxx,\boy)\leq \liminf_{k\to\infty} \mass{\bog^{k}}= \liminf_{k\to\infty}  \Ll^{1}(\Igp{k})
\qquad\text{bounded,}
\]
but this is not possible because we have already shown that $\sup_{\lambda}d_{\lambda}(\boxx,\boy)=+\infty$ in~\eqref{E:rgrgrgrrgq}.
\end{proof}

\begin{remark}
If the multi-flux was ``saturated'', in the sense that uniform limits of sequences of integral curves $\Zz(\cdot,\amf_{k})$ correspond to some integral curve $\Zz(\cdot,\amf)$, then in~\eqref{E:rfienjgegrbbeqb} we would have the limit of minima instead of the limit of infima, but we would not have other relevant differences.
In particular, for every $0 \leq \lambda\leq 1$ the distances $d_{\lambda}$ constructed by $\Zz$ turn out to be the same as the ones constructed by the ``saturation'' of $\Zz$.
 \end{remark}
 
We now show the second characterization of the distance $d_{0}$. Its proof is based on a compactness result 
for forward-backward integral curves (see Lemma~\ref{L:trgeibonwn}), which we need for the proof of one
of the inequalities in the characterization.
 
\begin{lemma}
\label{L:equivalenceDist}
Suppose $\bov$ is $\widehat\boxx$-uniformly continuous as in Definition~\ref{D:cont}.
Let $\Zz$ be a multi-flow of $\bov$ as in Definition~\ref{d:multiflow}. Then for every $\boxx,\boy \in [0,T]\times\R^n$ we have
\begin{equation}\label{e:ch2}
d_{0}(\boxx,\boy)\equiv 
\begin{cases}
+\infty \qquad\text{if there is no forward-backward integral curve of $\bov$ joining $\boxx$ and $\boy$}
\\
\min\left\{ {\overline S\ \Bigg| \ 
	\begin{split}\text{there is a forward-backward integral curve $\overline\bog:[0,\overline S]\to [0,T]\times\R^n$}\\\text{of $\bov$ joining $\boxx$ and $\boy$}\end{split}}\right\} \ .
\end{cases}
\end{equation}
\end{lemma}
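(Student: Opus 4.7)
The plan is to prove the two inequalities in~\eqref{e:ch2} separately, using the first characterization of $d_0$ given by Lemma~\ref{L:eqDist1}. For the bound $d_0(\boxx,\boy) \geq$ RHS, if $d_0(\boxx,\boy)=+\infty$ there is nothing to show; otherwise, Lemma~\ref{L:eqDist1} yields a sequence of competitors $\bog^m:[0,S^m]\to\TTT\times\R^n$ joining $\boxx$ and $\boy$ with $\Ll^1(\Ignp{m})\to 0$ and $\Ll^1(\Igp{m})\to d_0(\boxx,\boy)$, so that $S^m\to d_0(\boxx,\boy)$. Since each $\bog^m$ is $\norm{\bov}_\infty$-Lipschitz, after a common reparametrization (padding constantly if needed) the Ascoli--Arzel\`a theorem extracts a subsequential uniform limit $\overline\bog:[0,d_0(\boxx,\boy)]\to\TTT\times\R^n$ joining $\boxx$ and $\boy$. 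To identify $\overline\bog$ as a forward-backward integral curve, I use that the horizontal part of $\bog^m$ has vanishing measure while $\dot\bog^m=\pm\bov(\bog^m)$ on the rest, together with the compactness/stability result stated as Lemma~\ref{L:trgeibonwn}, which exploits the $\widehat\boxx$-uniform continuity of $\bov$. This shows both RHS $\leq d_0(\boxx,\boy)$ and that the infimum is attained.

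For the reverse bound $d_0(\boxx,\boy)\leq$ RHS, given a forward-backward integral curve $\overline\bog:[0,\overline S]\to\TTT\times\R^n$ joining $\boxx$ and $\boy$, I construct, for every $\lambda\in(0,1]$ and every $\eta>0$, a competitor $\bog$ with $\mass{\bog}\leq\overline S+\eta$; this will yield $d_\lambda\leq\overline S$ for all $\lambda$ and hence $d_0\leq\overline S$. I partition $[0,\overline S]$ into small sub-intervals on which $\overline\bog$ is, up to a negligible set, either a pure forward or a pure backward integral curve. On each sub-interval I pick, by the density of $\Ima\Zz$, a parameter $\amf_j\in\Amf$ such that $\Zz(\gamma_0(s_j),\amf_j)$ matches $\overline\bog(s_j)$ in the $\widehat\boxx$-coordinate up to a small error, and replace the piece of $\overline\bog$ by the corresponding multi-flow segment $\Zz(\cdot,\amf_j)$. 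A Gronwall-type estimate built from the modulus of $\widehat\boxx$-uniform continuity of $\bov$ controls how much the two integral curves may deviate over the length of a sub-interval. Between consecutive sub-intervals I insert a horizontal jump to correct the accumulated error. With an appropriate balance between sub-interval length, initial-approximation parameter, and modulus of continuity, the cumulative horizontal-jump length can be made $\leq\lambda\eta$ while the total length of multi-flow segments is $\leq\overline S$.

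The delicate step, I expect, is this second direction, for two compounding reasons. First, the sets $\{\dot\gamma_0=+1\}$ and $\{\dot\gamma_0=-1\}$ are merely measurable and not in general finite unions of intervals, so a preliminary reduction is needed in which they are approximated by finite unions of intervals in measure and the small exceptional set is absorbed into additional horizontal jumps. Second, since $\bov$ is only continuous in $\widehat\boxx$, classical Gronwall stability of integral curves with respect to initial data is not available; the error between $\overline\bog$ and the selected multi-flow segment must instead be estimated directly from the modulus of continuity of $\bov$, and the partition scale must be tuned against this modulus so that the cumulative horizontal jump stays below the target $\lambda\eta$. In the first direction, the main technical ingredient is the compactness result Lemma~\ref{L:trgeibonwn}, whose role is precisely to upgrade uniform convergence of Lipschitz curves with $|\dot\gamma_0|=1$ a.e.\ along the sequence to the forward-backward integral curve property in the limit.
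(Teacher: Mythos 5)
Your overall architecture coincides with the paper's: both inequalities are routed through the characterization of Lemma~\ref{L:eqDist1}, the bound $d_0\geq$ RHS (and the attainment of the minimum) is obtained exactly as in the paper by extracting, via Lemma~\ref{L:trgeibonwn}, a uniform limit of near-optimal competitors, and the bound $d_0\leq\overline S$ is obtained by approximating a given forward-backward curve with competitors made of multi-flow segments selected by density of $\Ima\Zz$ and joined by short horizontal corrections. So the first half of your proof is essentially the paper's second part verbatim.

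In the construction for $d_0\leq\overline S$ your route differs in one respect, and this is where you should be careful. The paper does \emph{not} approximate the sets $\{\dot{\overline\gamma}_0=\pm1\}$ by finite unions of intervals: it takes equally spaced nodes $s=i\overline S/k$, lets the competitor run \emph{monotonically} along $\Zz(\cdot,\amf^{k,i})$ between the times $t^{k,i-1}=\overline\gamma_0((i-1)\overline S/k)$ and $t^{k,i}$, and controls the resulting endpoint mismatch by an area-formula argument in which the back-and-forth excursions of $\overline\gamma_0$ inside each sub-interval cancel in pairs, each pair being estimated by the $\widehat\boxx$-modulus of continuity. The reason the paper is forced into this somewhat elaborate cancellation is that $\bov$ is only Borel in $x_0$: Definition~\ref{D:cont} gives continuity in $\widehat\boxx$ at \emph{fixed} $x_0$, so one may only ever compare $\bov$ at two points with the \emph{same} $0$-component. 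A ``Gronwall-type'' comparison of $\overline\bog$ with a monotone-in-time multi-flow segment, as you sketch it, silently compares $\bov(\overline\bog(s))$ with $\bov(\Zz(\tau(s),\amf_j))$ where $\tau(s)\neq\overline\gamma_0(s)$ whenever $\overline\gamma_0$ has wrong-direction excursions inside $I_j$, and the modulus of continuity gives you nothing there. The fix is either the paper's pairing argument, or to run the comparison against the time-synchronized curve $s\mapsto\Zz(\overline\gamma_0(s),\amf_j)$ (whose $0$-component agrees with that of $\overline\bog(s)$ by construction), which yields $|\overline\bog(s)-\Zz(\overline\gamma_0(s),\amf_j)|\leq\epsilon_j+\rho\,|s-\min I_j|$ directly and makes your preliminary reduction to finite unions of intervals unnecessary: the competitor can still traverse $\Zz(\cdot,\amf_j)$ monotonically between the node times, since its endpoint $\Zz(\overline\gamma_0(\max I_j),\amf_j)$ does not depend on the path taken. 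With that correction your argument closes, and the bookkeeping $\Ll^{1}(\Igp{})\leq\sum_j|t^{j+1}-t^j|\leq\overline S$ together with horizontal junctions of total length $o(1)$ gives $d_\lambda\leq\overline S$ for every $\lambda$, hence $d_0\leq\overline S$.
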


\begin{proof}[Proof of Lemma~\ref{L:equivalenceDist}, first part]
We show that, if there exists a forward-backward integral curve of $\bov$ defined on $[0,\overline S]$ and joining $\boxx$ and $\boy$, then $d_{0}(\boxx,\boy)\leq \overline S$. This proves that
\begin{equation}\label{e:ch2part}
d_{0}(\boxx,\boy) \leq 
\begin{cases}
+\infty \qquad\text{if there is no forward-backward integral curve of $\bov$ joining $\boxx$ and $\boy$}
\\
\inf\left\{ {\overline S\ \Bigg| \ 
	\begin{split}\text{there is a forward-backward integral curve $\overline\bog:[0,\overline S]\to [0,T]\times\R^n$}\\\text{of $\bov$ joining $\boxx$ and $\boy$}\end{split}}\right\}  \ . 
\end{cases}
\end{equation}
Notice that~\eqref{e:ch2part} is trivial if there is no forward-backward integral curve of $\bov $ joining $\boxx$ and $\boy$. 

Let $\overline S$ be any real value for which there exists a forward-backward integral curve $\overline\bog:[0, \overline S]\to [0,T]\times\R^{n}$ of $\bov$ joining $\boxx$ and $\boy$.
Let $k\in\N$.
We construct competitors $\bog^{k}$ joining $\boxx$ and $\boy$ satisfying 
\begin{align}
\label{E:rggpgrggeqgeq}
\Ll^{1}(\Ignp{k})<\sfrac{1}{k}
\quad\text{and}\quad
\liminf_{k\to\infty}\Ll^{1}(\Igp{k})\leq \overline S \ .
\end{align}
This will be enough to conclude in view of the characterization in~\eqref{E:rfienjgegrbbeqb}.

By the density assumption on $\Ima\Zz$, there are $\amf^{k,1},\dots,\amf^{k,k}$ such that 
\[
\left|\Zz\left(\overline\gamma_{0}\left(i\frac{\overline S}{k}\right),\amf^{k,i}\right) -\overline\bog\left(i\frac{\overline S}{k}\right)\right|\leq \frac{1}{k^{2}} 
\qquad i=1,\dots,k \ .
\]
Define thus $t^{k,i}:=\overline\gamma_{0}\left(i\frac{\overline S}{k}\right)$ for $i=0,1,\dots,k$ and
\begin{align*}
&S_{\bog^{k}}^{0}:=0\ ,
\qquad S_{\bog^{k}}^{1}:=\frac{\left|\Zz\left(\overline\gamma_{0}(0),\amf^{k,1}\right)-\overline\bog(0)\right|}{\norm{\bov}_{\infty}}
\ ,\qquad
S_{\bog^{k}}:=S_{\bog^{k}}^{2k+1}
:= S_{\bog^{k}}^{2k} + \frac{\left|\Zz\left(\overline\gamma_{0}(\overline S),\amf^{k,k}\right)-\overline\bog(\overline S)\right|}{\norm{\bov}_{\infty}}\,,
\\
&S_{\bog^{k}}^{2i}=S_{\bog^{k}}^{2i-1}+\left|t^{k,i}-t^{k,i-1}\right|\qquad\qquad \text{for $i=1,\dots,k$,}
\\
&S_{\bog^{k}}^{2i+1}=S_{\bog^{k}}^{2i}+\frac{\left|\Zz\left(t^{k,i},\amf^{k,i}\right)-\Zz\left(t^{k,i},\amf^{k,i+1}\right)\right|}{\norm{\bov}_{\infty}}
\qquad\qquad \text{for $i=1,\dots,k-1$,}
\end{align*}
and admissible competitors $\bog^{k}:[0,S_{\bog^{k}}]\to\TTT\times\R^{n}$ by
\[
\bog^{k}(s)
=\begin{cases}
\overline\bog(0)+\frac{s}{S_{\bog^{k}}^{1}}\left(\Zz\left(\overline\gamma_{0}(\overline 0),\amf^{k, 1}\right) -\overline\bog(0)\right)
&0\leq s\leq S_{\bog^{k}}^{1}
\\
\Zz\left(t^{k,i-1}+s,\amf^{k,i}\right) & S_{\bog^{k}}^{2i-1}\leq s\leq S_{\bog^{k}}^{2i} , \text{ if $t^{k,i-1}\leq t^{k,i}$}\\
\Zz\left(t^{k,i-1}-s,\amf^{k,i}\right) & S_{\bog^{k}}^{2i-1}\leq s\leq S_{\bog^{k}}^{2i} , \text{ if $t^{k,i-1}\geq t^{k,i}$}
\\
\Zz\left(t^{k,i},\amf^{k,i}\right)
+\frac{s-S_{\bog^{k}}^{2i}}{S_{\bog^{k}}^{2i+1}-S_{\bog^{k}}^{2i}}
\left(\Zz\left(t^{k,i},\amf^{k,i+1}\right)-\Zz\left(t^{k,i},\amf^{k,i}\right)\right)
& S_{\bog^{k}}^{2i }\leq s\leq  S_{\bog^{k}}^{2i+1}
\\
\overline\bog(\overline S)+\frac{s-S_{\bog^{k}}^{2k+1}}{S_{\bog^{k}}^{2k}-S_{\bog^{k}}^{2k+1}}\left(\Zz\left(\overline\gamma_{0}(\overline S) ,\amf^{k, k}\right) -\overline\bog(\overline S)\right)
& S_{\bog^{k}}^{2k}\leq s\leq  S_{\bog^{k}}^{2k+1} \,, 
\end{cases}
\]
where in the definition above $i=1,\dots, k$.
Notice that on each $[S_{\bog^{k}}^{2i-1}, S_{\bog^{k}}^{2i}]$ we follow $\Zz\left(\cdot,\amf^{k,i}\right) $ from $\overline\gamma_{0}\left(\sfrac{i\overline S}{k}\right)$ to $\overline\gamma_{0}\left(\sfrac{(i+1) \overline S}{k}\right)$, while on $[S_{\bog^{k}}^{2i},S_{\bog^{k}}^{2i+1}]$ we make straight horizontal junctions with $|\dot\bog^{k}|=\norm{\bov}_{\infty}$.
It is clear that the competitors $\bog^{k}$ join $\boxx$ and $\boy$. Moreover, they satisfy
\[
\Ll^{1} \left(\Igp{k}\right)=\sum_{i=1}^{k}|t^{k,i}-t^{k,i-1}|=
\sum_{i=1}^{k}\left|\overline\gamma_{0}\left(i\frac{\overline S}{k}\right)-\overline\gamma_{0}\left((i-1)\frac{\overline S}{k}\right)\right|
\leq\overline S
\]
and, as we now show,
\begin{align}
\norm{\bov}_{\infty}\Ll^{1} \left(\Ignp{k}\right)
&=
{\left|\Zz\left(\overline\gamma_{0}(0),\amf^{k,1}\right)-\overline\bog(0)\right|}
+
{\left|\Zz\left(\overline\gamma_{0}(\overline S),\amf^{k,k}\right)-\overline\bog(\overline S)\right|}
\notag\\\notag
&\qquad+
\sum_{i=1}^{k-1}\left|\Zz\left(t^{k,i},\amf^{k,i}\right)-\Zz\left(t^{k,i},\amf^{k,i+1}\right)\right|
\\
\label{E:ooooororroror}&\leq \frac{2}{k^{2}}+ \frac{2+2\norm{\bov}_{\infty}\overline S}{k}+ \rho_{k} {\overline S} \ ,
\end{align}
where we estimated $\left|\Zz\left(\overline\gamma_{0}(\overline S),\amf^{k,k}\right)-\overline\bog(\overline S)\right|\leq \sfrac{1}{k^2}$ and $\left|\Zz\left(\overline\gamma_{0}(0),\amf^{k,1}\right)-\overline\bog(0)\right|\leq \sfrac{2\norm{\bov}_{\infty}\overline S}{k} + \sfrac{1}{k^2}$, since
\begin{equation}\label{E:ggegegegeeg}
\left|\Zz\left(\overline\gamma_{0}(r),\amf^{k,i}\right)-\overline\bog(r')\right|\leq \left|\Zz\left(t^{k,i},\amf^{k,i}\right)-\overline\bog\left(i\frac{\overline S}{k}\right)\right| + \left|\Zz\left(t^{k,i},\amf^{k,i}\right)-\Zz\left(\overline\gamma_{0}(r),\amf^{k,i}\right)\right|+\left|\overline\bog(r')-\overline\bog\left(i\frac{\overline S}{k}\right)\right|
\end{equation}
for $(i-1)\sfrac{\overline S}{k}\leq r, r'\leq i \sfrac{\overline S}{k} $, for $i=1,\dots,k$, and we introduced
\begin{align}\label{E:grgrirgiingnrngrngrw}
\rho_{k}:=\max\left\{|\bov(t,\widehat \boz)-\bov(t,\widehat \boz')|\ :\ |\widehat\boz-\widehat\boz'|\leq2\norm{\bov}_{\infty} \sfrac{\overline S}{k}+\sfrac{1}{k^{2}}\right\}\quad\downarrow\quad0 \ .
\end{align}

We now prove~\eqref{E:ooooororroror} by estimating the terms of the remaining sum. By the triangular inequality
\begin{align}
\left|\Zz\left(t^{k,i},\amf^{k,i}\right)-\Zz\left(t^{k,i },\amf^{k,i+1}\right)\right|
&\leq\left|\Zz\left(t^{k,i},\amf^{k,i}\right)-\overline\bog\left(i\frac{\overline S}{k}\right)\right|
+\left|\overline\bog\left((i+1)\frac{\overline S}{k}\right)-\Zz\left(t^{k,i+1},\amf^{k,i+1}\right)\right|\notag
\\ \label{E:grgrreggerreg}
&\quad +\left|
\overline\bog\left((i+1)\frac{\overline S}{k}\right)-\overline\bog\left(i\frac{\overline S}{k}\right)+\Zz\left(t^{k,i},\amf^{k,i+1}\right)-\Zz\left(t^{k,i+1},\amf^{k,i+1}\right) \right| \ .
\end{align}
Each one of the first two addends is less than $\sfrac{1}{k^2}$ by construction; moreover 
\begin{align*}
\overline\bog\left((i+1)\frac{\overline S}{k}\right)&-\overline\bog\left(i\frac{\overline S}{k}\right) +\Zz\left(t^{k,i},\amf^{k,i+1}\right)-\Zz\left(t^{k,i+1},\amf^{k,i+1}\right)
\\& =
\int_{i\sfrac{\overline S}{k}}^{(i+1)\sfrac{\overline S}{k}}  \dot{\overline\bog}  (r) dr  
 -\int_{t^{k,i}}^{t^{k,i+1}}\!\!\!\!\!\bov\left(\Zz\left(s,\amf^{k,i+1}\right)\right)ds
\\& =
\int_{i\sfrac{\overline S}{k}}^{(i+1)\sfrac{\overline S}{k}}  \dot{\overline\gamma}_{0} (r)\bov\left( \overline\bog\left(r \right)\right)dr  
 -\int_{t^{k,i}}^{t^{k,i+1}}\!\!\!\!\!\bov\left(\Zz\left(s,\amf^{k,i+1}\right)\right)ds  \ .
\end{align*}
Consider an auxiliary curve 
$$
\widetilde\bog\ :\ \left[ i\sfrac{\overline S}{k},(i+1)\sfrac{\overline S}{k}+|{t^{k,i+1}}-t^{k,i}|\right]\ \to [0,T] \ \times\R^{n}
$$
which juxtaposes $\overline\bog|_{[i \sfrac{\overline S}{k} , (i+1)\sfrac{\overline S}{k}]}$ and $\Zz\left(\cdot ,\amf^{k,i+1}\right)$ from $t^{k,i+1}$ to $t^{k,i}$, so that the $0$-component ${\widetilde\gamma}_{0}$ of $\widetilde\bog$ is Lipschitz continuous and runs from $t^{k,i}$ to $t^{k,i}$ itself:
$$
\widetilde\bog(r)
=
\begin{cases}
 \overline\bog\left(r \right) & i\sfrac{\overline S}{k}\leq r\leq (i+1)\sfrac{\overline S}{k} \ ,
 \\
 \Zz\left(t^{k,i+1}-\left(r-(i+1)\sfrac{\overline S}{k}\right),\amf^{k,i+1}\right) &     (i+1)\sfrac{\overline S}{k}<r<(i+1)\sfrac{\overline S}{k}+{t^{k,i+1}}-t^{k,i} \text{ or}
\\
 \Zz\left(t^{k,i+1}+\left(r-(i+1)\sfrac{\overline S}{k}\right),\amf^{k,i+1}\right) &     (i+1)\sfrac{\overline S}{k}<r<(i+1)\sfrac{\overline S}{k}+t^{k,i}-{t^{k,i+1}} \ .
\end{cases}
$$
Then one notices by definition of $\widetilde\bog$ that 
\begin{align*}
\int_{i\sfrac{\overline S}{k}}^{(i+1)\sfrac{\overline S}{k}}  \dot{\overline\gamma}_{0} (r)\bov\left( \overline\bog\left(r \right)\right)dr  
-\int_{t^{k,i}}^{t^{k,i+1}}\!\!\!\!\!\bov\left(\Zz\left(s,\amf^{k,i+1}\right)\right)ds
=
\int_{i\sfrac{\overline S}{k}}^{(i+1)\sfrac{\overline S}{k}+|{t^{k,i+1}}-t^{k,i}|}  \dot{\widetilde\gamma}_{0} (r)\bov\left( \widetilde\bog\left(r \right)\right)dr  
\end{align*}
and thus applying the area formula to ${\widetilde\gamma}_{0}$, which satisfies $\dot{\overline\gamma}_{0}\in\{-1;1\}$, one has
\begin{align*}
\int_{i\sfrac{\overline S}{k}}^{(i+1)\sfrac{\overline S}{k}}  \dot{\overline\gamma}_{0} (r)\bov\left( \overline\bog\left(r \right)\right)dr  
-\int_{t^{k,i}}^{t^{k,i+1}}\!\!\!\!\!\bov\left(\Zz\left(s,\amf^{k,i+1}\right)\right)ds 
=
\int_{{\overline\gamma}_{0}([i\frac{\overline S}{k},(i+1)\frac{\overline S}{k}])} 
\sum_{r\in \left(\widetilde\gamma_{0} \right)^{-1}(s)} \dot{\widetilde\gamma}_{0} (r)\bov\left( \widetilde\bog\left(r \right)\right)ds
 \ .
\end{align*}
In the summation within the last integrand, the addends corresponding to each $s\in{\overline\gamma}_{0}([\sfrac{ i\overline S}{k},\sfrac{(i+1) \overline S}{k}])$ can be grouped in couples with opposite coefficient $\dot{\widetilde\gamma}_{0}$; being, as a consequence of~\eqref{E:ggegegegeeg},
\[
 \left|\widehat{\widetilde\bog}\left(r \right)-\widehat{\widetilde\bog}\left(r'\right)\right|\leq2\norm{\bov}_{\infty} \sfrac{\overline S}{k}+\sfrac{1}{k^{2}}\ ,
\]
 so that
\[
\left|\bov\left({\widetilde\bog}\left(r \right)\right)-\bov\left({\widetilde\bog}\left(r'\right)\right)\right|
\leq \rho_{k} \ ,
\] 
by the $\widehat\boxx$-uniform continuity and definition~\eqref{E:grgrirgiingnrngrngrw}, this allows to conclude the estimate of~\eqref{E:grgrreggerreg} as
\begin{align*}
\left|\Zz\left(t^{k,i},\amf^{k,i}\right)-\Zz\left(t^{k,i },\amf^{k,i+1}\right)\right|
&\leq  \frac{2}{k^{2}} +\left| 
\int_{{\overline\gamma}_{0}([i\frac{\overline S}{k},(i+1)\frac{\overline S}{k}])} 
\sum_{r\in \left(\widetilde\gamma_{0} \right)^{-1}(s)} \dot{\widetilde\gamma}_{0} (r)\bov\left( \widetilde\bog\left(r \right)\right)ds
\right|
\\
&\leq \frac{2}{k^{2}} +\rho_{k} \cdot  \frac{\overline S}{k} \ .
\end{align*}
In particular,~\eqref{E:rggpgrggeqgeq} holds.
\end{proof}

For the proof of the opposite inequality in Lemma~\ref{L:equivalenceDist} and of the fact that 
the minimum at the right hand side is attained we need the following 
compactness result, which holds thanks to the $\widehat\boxx$-uniform continuity of~$\bov$. 

\begin{lemma}
\label{L:trgeibonwn}
Suppose $\bov$ is $\widehat\boxx$-uniformly continuous as in Definition~\ref{D:cont}.
Consider a sequence of competitors $\{\bog^{k}\}_{k\in\N}$ joining $\boxx$ and $\boy$ such that $\Ll^{1}(\Ignp{k})\downarrow0$ and $\Ll^{1}(\Igp{k})\downarrow \overline S\in(0,+\infty)$. Then, up to subsequences, $\{\bog^{k}\}_{k\in\N}$ converges uniformly on $[0,\overline S]$ to a forward-backward integral curve of $\bov$, up to re-parameterization.
If moreover $\overline S=d_{0}(\boxx,\boy)$, then $\{\bog^{k}\}_{k\in\N}$ converges to a forward-backward integral curve of $\bov$.
\end{lemma}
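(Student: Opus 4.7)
My plan is to apply Arzel\`a--Ascoli to a linear rescaling of the competitors, pass to the limit in the near-ODE they satisfy, and then invoke the already-established first half of Lemma~\ref{L:equivalenceDist} to rule out ``pauses'' in the optimal case.

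\textbf{Compactness.} Each $\bog^k$ is Lipschitz with constant $\norm{\bov}_{\infty}$, and $S_{\bog^k}=\Ll^1(\Igp{k})+\Ll^1(\Ignp{k})\to\overline S$. Setting $\mu_k:=S_{\bog^k}/\overline S\to 1$, the rescalings $\widetilde\bog^k(s):=\bog^k(\mu_k s)$ on $[0,\overline S]$ are equi-Lipschitz with equibounded image (all start at $\boxx$). Arzel\`a--Ascoli gives, along a subsequence, uniform convergence $\widetilde\bog^k\to\overline\bog$ on $[0,\overline S]$, with $\overline\bog$ Lipschitz and, after a further subsequence fixing the orientation, $\overline\bog(0)=\boxx$ and $\overline\bog(\overline S)=\boy$.

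\textbf{Limit ODE and reparametrisation.} By Definition~\ref{D:competitors}, $\dot\bog^k=\dot\gamma_0^k\,\bov(\bog^k)$ on $\Igp{k}$ (because $\bog^k=\Zz(\gamma_0^k,\amf_i^k)$ and $\ddz \Zz=\bov(\Zz)$), while $\dot\gamma_0^k=0$ and $|\dot\bog^k|\leq\norm{\bov}_{\infty}$ on $\Ignp{k}$. Hence
\[
\bog^k(s)=\boxx+\int_{0}^{s}\dot\gamma_0^k(\tau)\,\bov(\bog^k(\tau))\,d\tau+R_k(s),\qquad \norm{R_k}_{\infty}\leq 2\norm{\bov}_{\infty}\,\Ll^1(\Ignp{k})\to 0.
\]
After rescaling, $\dot{\widetilde\gamma_0^k}\in\{-\mu_k,0,\mu_k\}$ converges weakly-$*$ in $L^{\infty}$ to $\dot{\overline\gamma_0}$. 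Combined with the uniform convergence $\widetilde\bog^k\to\overline\bog$ and the $\widehat\boxx$-uniform continuity of $\bov$, the rescaled identity passes to the limit and gives $\dot{\overline\bog}=\dot{\overline\gamma_0}\,\bov(\overline\bog)$ almost everywhere. In particular $\overline\bog$ is constant on $\{\dot{\overline\gamma_0}=0\}$, so setting $\tau(s):=\int_0^s|\dot{\overline\gamma_0}(r)|\,dr$ and $L:=\tau(\overline S)$, the curve $\bog^{*}:=\overline\bog\circ\tau^{-1}:[0,L]\to[0,T]\times\R^n$ is well defined, Lipschitz, joins $\boxx,\boy$, and satisfies $\dot\bog^{*}=\pm\bov(\bog^{*})$ almost everywhere: a forward-backward integral curve of which $\overline\bog$ is a monotone reparametrisation.

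\textbf{Optimal case and main obstacle.} If $\overline S=d_0(\boxx,\boy)$, the first (already proved) half of Lemma~\ref{L:equivalenceDist} applied to $\bog^{*}$ gives $d_0(\boxx,\boy)\leq L\leq\overline S=d_0(\boxx,\boy)$, whence $L=\overline S$, $|\dot{\overline\gamma_0}|=1$ almost everywhere, and $\overline\bog=\bog^{*}$ is itself a forward-backward integral curve. The delicate point is the passage to the limit in the product $\dot{\widetilde\gamma_0^k}\,\bov(\widetilde\bog^k)$, since $\dot{\widetilde\gamma_0^k}$ converges only weakly-$*$ while $\bov$ is not assumed to be continuous in $x_0$. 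The key observation is that $\dot{\widetilde\gamma_0^k}$ is supported on intervals where $\widetilde\gamma_0^k$ is piecewise affine with slopes $\pm\mu_k$, so an area-formula change of variables $t=\widetilde\gamma_0^k(s)$, in the spirit of the argument leading to~\eqref{E:ooooororroror}, converts the critical integrand into one in $t$, where the $\widehat\boxx$-uniform continuity of $\bov$ together with the uniform convergence of $\widetilde\bog^k$ provides the strong convergence required to pass to the limit.
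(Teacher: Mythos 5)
Your argument follows the same route as the paper's proof: Arzel\`a--Ascoli on the $\norm{\bov}_{\infty}$-Lipschitz competitors, passage to the limit in the identity $\dot\bog^{k}=\dot\gamma_{0}^{k}\,\bov(\bog^{k})+\dot\bog^{k}\Uno_{\Ignp{k}}$, reparametrisation by $\tau(s)=\int_{0}^{s}|\dot{\overline\gamma}_{0}|$, and, in the optimal case, the already-established inequality~\eqref{e:ch2part} to force $|\dot{\overline\gamma}_{0}|=1$ a.e. (the paper runs this last step as a contradiction; your direct chain $d_{0}\leq L\leq\overline S=d_{0}$ is the same argument, stated more cleanly). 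The compactness, reparametrisation and optimality steps are fine.

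The one point where you deviate is the product limit $\dot{\widetilde\gamma}_{0}^{k}\,\bov(\widetilde\bog^{k})\rightharpoonup\dot{\overline\gamma}_{0}\,\bov(\overline\bog)$, and it is also the only point where your argument is not actually carried out. The paper disposes of it in one line, taking $\dot\gamma_{0}^{k}\rightharpoonup\dot{\overline\gamma}_{0}$ weakly and $\bov(\bog^{k})\to\bov(\overline\bog)$ strongly and multiplying. Your worry is legitimate: with $\bov$ only $\widehat\boxx$-uniformly continuous as in Definition~\ref{D:cont}, splitting $\bov(\widetilde\bog^{k})-\bov(\overline\bog)$ into the increment in $\widehat\boxx$ at frozen $x_{0}=\widetilde\gamma_{0}^{k}(s)$ (controlled by the modulus of continuity) and the increment in $x_{0}$ at frozen $\widehat\boxx=\widehat{\overline\bog}(s)$, the second piece is not controlled by any hypothesis, so the strong convergence is not automatic. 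But your proposed remedy is only gestured at: after the change of variables $t=\widetilde\gamma_{0}^{k}(s)$ the critical integral becomes a sum over the level sets of $\widetilde\gamma_{0}^{k}$ (where $\dot{\widetilde\gamma}_{0}^{k}\in\{0,\pm\mu_{k}\}$), whereas the limit object involves $\overline\gamma_{0}$, whose derivative can take any value in $[-1,1]$, so its level sets and multiplicities bear no clear relation to those of $\widetilde\gamma_{0}^{k}$; the cancellation-in-pairs device of~\eqref{E:ooooororroror} compares two curves over the \emph{same} $t$-interval and does not transfer as stated, and in particular it does not yield the ``strong convergence'' you invoke. To close this you should either justify the strong convergence the paper asserts, or pass to the limit in the integrated identity by comparing with increments of the (continuous) primitives $t\mapsto\Zz(t,\amf)$ along each sub-interval of $\Igp{k}$. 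As written, the final paragraph is a statement of intent rather than a proof of the step it flags.
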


\begin{proof}
Fix the ideas on the case when the starting point of each $\bog^{k}$ is $\boxx$ and the final point $\boy$, up to reversing the parameterizations.
We have that $\Ll^{1}(\Ig{k})=\Ll^{1}(\Igp{k})+\Ll^{1}(\Ignp{k})$ decreases to $\overline S$.
Let $\bog:[0,\overline S]\to{\R^{n}}$ denote a uniform limit of $\bog^{k}|_{[0,\overline S]}$, up to subsequences, which exists by Ascoli-Arzel\`a theorem since all curves are $\norm{\bov}_{\infty}$-Lipschitz.
We clearly have that $\bog(\overline S)=\boy$ and $\bog(0)=\boxx$. We now show
\begin{equation}
\label{E:reongeegegeg}
\dot  \bog (\overline t )  =  \dot\gamma_{0} (\overline t)\bov(\bog(\overline t)) 
\qquad \text{in $\D'((0,\overline S))$}
\ .
\end{equation}
This proves the first claim:  the direction of the velocity $\dot  \bog$ of $\bog$ is given by $\bov(\bog ) $.
One could thus  change parametrization so that $|\dot\gamma_{0}|\equiv 1 $.

 Notice first that, since $\bog^{k}$ converges uniformly to $\bog$, then $\dot\bog^{k}$ converges weakly to $\dot\bog$.
We thus just need to show that $\dot\gamma_{0} (\overline t)\bov(\bog(\overline t)) $ is the weak limit of $\dot\bog^{k}$.
By definition of competitor
\[
\dot\bog^{k}(s)
=
\dot\gamma_{0}^{k}(s)\bov(\bog^{k}(s))
\qquad \text{if $s\in \Igp{k}$.}
\]
Being $\dot\gamma_{0}^{k}(s)=0$ on $ \Ignp{k} $ one can also write
\begin{equation}
\label{E:grfrrefr}
\dot\bog^{k}(s)
=
\dot\gamma_{0}^{k}(s)\bov(\bog^{k}(s))+ \dot\bog^{k}(s)\Uno_{\Ignp{k}}(s)
\qquad \text{for all $s\in \Ig{k}$.}
\end{equation}
Moreover, by the specific sequence we are considering $\Ll^{1}(\Ignp{k})\downarrow0$.
In particular \[ \dot\bog^{k}(s)\Uno_{\Ignp{k}}(s)  \quad\text{is weakly converging to the null function.}\]
Since $\bov(\bog^{k}(s))$ is strongly converging to $\bov(\bog (s))$ and $\dot\gamma_{0}^{k}(s)$ is weakly converging to $\dot\gamma_{0} (s)$ in the weak limit of~\eqref{E:grfrrefr} we obtain~\eqref{E:reongeegegeg}.

Under the additional assumption that $\Ll^{1}(\Ignp{k})$ decreases precisely to $\overline S=d_{0}(\boxx,\boy)$ we now show that the uniform
limit $\bog:[0,\overline S]\to{\R^{n}}$ constructed above is directly a forward-backward integral curve of $\bov$,
without need of a reparametrization.

We remind that we have already shown in~\eqref{e:ch2part} that $d_{0}(\boxx,\boy)$ is less or equal than the length $\widetilde S$ of the domain of any forward-backward integral curve $\widetilde\bog:[0, \widetilde S]\to [0,T]\times\R^n$ of $\bov$ joining $\boxx$ and $\boy$.
Suppose by contradiction that the uniform limit $\bog:[0,d_{0}(\boxx,\boy)]\to{[0,T]\times\R^{n}}$ of $\{\bog^{k}\}_{k\in\N}$, constructed above, satisfies~\eqref{E:reongeegegeg} and $|\dot\gamma_{0}(s)|< 1-\varepsilon<1 $ on a set $P\subset[0,d_{0}(\boxx,\boy)]$ of positive measure.
If one changes parameterization so that the reparaterized curve $\widetilde\gamma:[0,\widetilde S]\to{[0,T]\times\R^{n}}$ satisfies $|\dot{\widetilde\gamma}_{0}|\equiv 1 $, then necessarily 
\[
\widetilde S< \left(1-\varepsilon \Ll^{1}(P)\right)\overline S<d_{0}(\boxx,\boy) \,,
\]
which is a contradiction to~\eqref{e:ch2part}.
\end{proof}

\begin{proof}[Proof of Lemma~\ref{L:equivalenceDist}, second part]
Assume $\overline S:=d_{0}(\boxx,\boy)$ is finite.
By the previous characterization~\eqref{E:rfienjgegrbbeqb}, there is a sequence of competitors~$\{\bog^{k}\}_{k\in\N}$   joining $\boxx$ and $\boy$ with $\Ll^{1}(\Ignp{k})<\sfrac{1}{k}$ and $\Ll^{1}(\Igp{k})\downarrow \overline S$.
By the compactness Lemma~\ref{L:trgeibonwn}, up to subsequences $\{\bog^{k}|_{[0,\overline S]}\}_{k\in\N}$ converges uniformly to some Lipschitz continuous curve $\overline\bog:[0,\overline S]\to [0,T]\times\R^{n}$ which is a forward-backward integral curve of $\bov$ joining $\boxx$ and $\boy$.

This shows that 
\begin{equation}\label{e:ch2partbis}
d_{0}(\boxx,\boy) \geq 
\begin{cases}
+\infty \qquad\text{if there is no forward-backward integral curve of $\bov$ joining $\boxx$ and $\boy$}
\\
\inf \Big\{ \overline S\ | \ \text{there is a forward-backward integral curve $\overline\bog:[0,\overline S]\to [0,T]\times\R^n$ of $\bov$ joining $\boxx$ and $\boy$}\Big\} \ . 
\end{cases}
\end{equation}
Notice that~\eqref{e:ch2partbis} is trivial if $d_{0}(\boxx,\boy)=+\infty$. Together with the upper bound~\eqref{e:ch2part}, the lower bound~\ref{e:ch2partbis} implies the equality in~\eqref{e:ch2}, in which
the forward-backward integral curve $\overline\bog:[0,\overline S]\to [0,T]\times\R^{n}$ is an element which realizes the minimum.
\end{proof}

\section{Directional Lipschitz continuity}\label{s:dirlip}
We introduce in the present section the notion of directional Lipschitz continuity.
Formalizing this notion is very intuitive when integral curves of $\bov$ are unique. It is however more complex otherwise: two points $\boxx$ and $\boy$ in the domain could be joined by a concatenation of two integral curves of $\bov$, one run forward and one run backward, which meet in a third point $\boz$ not belonging in the domain (see Figure~\ref{fig:123ffff}). 
\begin{figure}[ht]
\centering
\begin{tikzpicture}[scale=1]
\centering 
\draw [domain=0:1,smooth,variable=\x, very thick,color=purple] plot  (\x,{ (1-\x)^3}) ;
\draw [domain=0:1,smooth,variable=\x, very thick,color=purple] plot  (\x,{ (\x-1)^3});
\draw[->,color=black] (-.2,0) -- (1.5,0) node[right] {$x_0$};
\draw[->,color=black] (0,-1.3) -- (0,1.3) node[right] {$x_1$};
\filldraw[fill=blue!40!white, draw=black] (0,1)circle (0.1cm) node[left] {$\boxx$}; 
\filldraw[fill=blue!40!white, draw=black] (0,-1)circle (0.1cm) node[left] {$\boy$}; 
\filldraw[fill=yellow, draw=black] (1,0)circle (0.1cm) node[below] {$\boz$}; 
\end{tikzpicture}
\caption{Consider two points $\boxx$ and $\boy$ that are not joined by an integral curve of the vector field. Suppose nevertheless they are joined by a concatenation of two integral curves meeting at a point $\boz$ out of the domain of the function $\phi$.
Fix for example $x_{0}=y_{0}=0$ and $z_{0}=1$.
If $\phi$ is $1$-Lipschitz continuous along integral curves then by the triangular inequality necessarily one has the nontrivial constraint $|\phi(\boxx)-\phi(\boy)|\leq 2$. }
\label{fig:123ffff}
\end{figure}
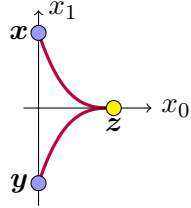
Then there is a constraint on the values of $\phi$ at $\boxx$ and $\boy$, constraint  which is not immediately clear by just looking at values of $\phi$ only along each integral curve of $\bov$.
Remarks~\ref{r:compar00} and~\ref{r:compar} below provide trickier examples.

\medskip

We already constructed in Section~\ref{Ss:distances} a distance $d_{0}$ which encodes the structure of the forward-backward integral curves of $\bov$ joining two point $\boxx$ and $\boy$, as stated in Lemma~\ref{L:equivalenceDist} above.
We thus define the directional Lipschitz continuity exploiting this distance $d_{0}$.

\begin{definition}[Directional Lipschitz continuity]
\label{def:equivalenceGen}
Given a multi-flow $\Zz$ of a vector field $\bov$ and a Borel set $D \subset [0,T]\times\R^n$, we say that a function $\phi: D\to\R$ is \emph{$L$-directionally Lipschitz continuous} if
\begin{equation}
\label{E:LrrgergGeneral0}
|\phi( \boy)-\phi( \boxx)|\leq L d_{0}(\boxx,\boy) 
\qquad \forall \ \boxx,\boy\in D \ .
\end{equation}
\end{definition}

We prove below in Lemma~\ref{L:equivalence} that, under the assumption that $\bov$ is $\widehat\boxx$-uniformly continuous, the $L$-directional Lipschitz continuity is equivalent to requiring $L$-Lipschitz continuity along forward-backward integral curves of $\bov$, in the sense that
\begin{equation}
\label{E:LrrgergGeneral1}
|\phi( \bog(s))-\phi( \bog(t))|\leq L |t-s| 
\qquad
\forall \ \bog  \text{ forward-backward integral curve of $\bov$ with $\bog(s),\,\bog(t)\in D$.}
\end{equation}

\begin{lemma}
\label{L:equivalence}
Suppose $\bov$ is $\widehat\boxx$-uniformly continuous. 
Then, a function $\phi: D \subset [0,T]\times \R^n\to \R$ is $L$-directionally Lipschitz continuous
(as in Definition~\ref{def:equivalenceGen}) if and only if it is
$L$-Lipschitz continuous along all forward-backward integral curves of $\bov$ joining points in $D$ 
(as in~\eqref{E:LrrgergGeneral1})
\end{lemma}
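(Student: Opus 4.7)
The plan is to reduce both implications to the characterization of $d_0$ provided by Lemma~\ref{L:equivalenceDist}, namely that $d_0(\boxx,\boy)$ is either $+\infty$ (when no forward-backward integral curve of $\bov$ joins $\boxx$ and $\boy$) or the minimum length $\overline S$ of such a joining curve $\overline\bog:[0,\overline S]\to [0,T]\times\R^n$. This characterization---where the attainment of the minimum and the upper bound both hinge on the $\widehat\boxx$-uniform continuity of $\bov$---does all of the real work; once it is in hand, the equivalence between~\eqref{E:LrrgergGeneral0} and~\eqref{E:LrrgergGeneral1} is essentially a bookkeeping argument, so this is the step where the hypothesis on $\bov$ enters.

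For the implication \eqref{E:LrrgergGeneral1}$\Rightarrow$\eqref{E:LrrgergGeneral0}, I would pick $\boxx,\boy\in D$ and dispose of the case $d_0(\boxx,\boy)=+\infty$ trivially. In the finite case, Lemma~\ref{L:equivalenceDist} supplies a realizer $\overline\bog:[0,d_0(\boxx,\boy)]\to [0,T]\times\R^n$ with endpoints $\boxx$ and $\boy$ (up to reversing the parametrization). Since both endpoints lie in $D$, applying~\eqref{E:LrrgergGeneral1} to $\overline\bog$ at $s=0$ and $t=d_0(\boxx,\boy)$ yields
\[
|\phi(\boy)-\phi(\boxx)|\leq L\cdot d_0(\boxx,\boy),
\]
which is exactly~\eqref{E:LrrgergGeneral0}.

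For the converse \eqref{E:LrrgergGeneral0}$\Rightarrow$\eqref{E:LrrgergGeneral1}, given any forward-backward integral curve $\bog:[0,S_{\bog}]\to [0,T]\times\R^n$ of $\bov$ and any $0\leq s<t\leq S_{\bog}$ with $\bog(s),\bog(t)\in D$, I would observe that the restriction $\bog|_{[s,t]}$, after translating its parameter to $[0,t-s]$, is itself a forward-backward integral curve of $\bov$ joining $\bog(s)$ and $\bog(t)$ and of parameter length $t-s$. Lemma~\ref{L:equivalenceDist} then gives $d_0(\bog(s),\bog(t))\leq t-s$, so combining with~\eqref{E:LrrgergGeneral0} produces
\[
|\phi(\bog(t))-\phi(\bog(s))|\leq L\cdot d_0(\bog(s),\bog(t))\leq L|t-s|,
\]
which is~\eqref{E:LrrgergGeneral1}.

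I expect no substantive obstacle beyond what is already packaged in Lemma~\ref{L:equivalenceDist}; the only subtle point to verify is that the extremal curve's parametrization is compatible with the convention $|\dot\gamma_0|\equiv 1$ a.e.\ built into the definition of forward-backward integral curve, and this is precisely what the minimum-attainment part of Lemma~\ref{L:equivalenceDist} (proved via Lemma~\ref{L:trgeibonwn}) guarantees.
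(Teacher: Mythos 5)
Your proof is correct and follows essentially the same route as the paper: both directions are reduced to the characterization of $d_0$ in Lemma~\ref{L:equivalenceDist}, using the realizing forward-backward integral curve for one implication and the inequality $d_0(\bog(s),\bog(t))\leq |t-s|$ (via the restricted curve) for the other. The extra remark about the restriction $\bog|_{[s,t]}$ being itself a forward-backward integral curve is a point the paper leaves implicit, but it is immediate from Definition~\ref{d:fb}.
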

\begin{proof}
Suppose~\eqref{E:LrrgergGeneral1} holds.
Let $\boxx$ and $\boy$ be points in $D$.
One needs to verify the inequality~\eqref{E:LrrgergGeneral0} only if $\overline S:=d_{0}(\boxx,\boy)$ is finite, otherwise it holds trivially.
As a consequence of the characterization of $d_{0}$ in Lemma~\ref{L:equivalenceDist}, there is some forward-backward integral curve $\bog:[0,\overline S]\to\R^{N}$ of $\bov$ joining $\boxx$ and $\boy$. In particular 
\[
|\phi( \boxx)-\phi( \boy)|=|\phi( \bog(0))-\phi( \bog(\overline S))|
\stackrel{\eqref{E:LrrgergGeneral1}} {\leq}L \overline S \equiv Ld_{0}(\boxx,\boy) \ .
\]

Suppose now $\phi$ is $L$-directionally Lipschitz continuous.
If $\boxx=\bog(s)$ and $\boy=\bog(t)$ for some forward-backward integral curve $\bog$ of $\bov$, then~\eqref{E:LrrgergGeneral1} holds since
\[
|\phi( \bog(s))-\phi( \bog(t))|=|\phi( \boxx)-\phi( \boy)|
\stackrel{\eqref{E:LrrgergGeneral0}}{\leq} L d_{0}(\boxx,\boy)
\stackrel{\text{Lemma~\ref{L:equivalenceDist}}}{\leq} L|t-s| \ . \qedhere
\]
\end{proof}

\subsection{On the definition of directional Lipschitz continuity}
In Lemma~\ref{l:compar} below we argue that, if a function $\phi : [0,T]\times\R^n \to \R$ is Lipschitz continuous in the Euclidean distance, then  the $L$-Lipschitz continuity along the integral curves $\Zz(\cdot,\amf)$, for $\amf\in\Amf$, in the sense that
\begin{equation}\label{e:nicelip}
|\phi( \Zz(x_{0},\amf))-\phi( \Zz(y_{0},\amf))|\leq L |y_{0}-x_{0}| 
\qquad
\forall \, \amf\in\Amf \, ,\, \forall \, x_{0},y_{0}\in\TTT \ , 
\end{equation}
implies the $L$-Lipschitz continuity along all forward-backward integral curves of $\bov$, also along  those not selected by the multi-flow $\Zz$.
In particular, it implies the $L$-directional Lipschitz continuity by Lemma~\ref{L:equivalence}.
In Remarks~\ref{r:compar00} and~\ref{r:compar} we provide examples showing that the assumption of Lipschitz continuity for the Euclidean distance on the whole $[0,T]\times\R^n$ in general cannot be dropped, when integral curves of $\bov$ are not unique.

\begin{lemma}
\label{l:compar}
Consider a Borel function $\phi : [0,T]\times\R^n \to \R$. Then the following properties are equivalent:
\begin{enumerate}
\item\label{ite:1} $\phi$ is $M$-Lipschitz continuous for the Euclidean distance and $L$-directionally Lipschitz continuous,
\item \label{ite:0} $\phi$ is $M$-Lipschitz continuous for the Euclidean distance and~\eqref{E:LrrgergGeneral1} holds with constant $L$, 
\item\label{ite:2} $\phi$ is $M$-Lipschitz continuous for the Euclidean distance and~\eqref{e:nicelip} holds with constant $L$.
\end{enumerate}
\end{lemma}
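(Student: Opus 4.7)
The three implications decompose very unevenly. The equivalence $(\ref{ite:1}) \Leftrightarrow (\ref{ite:0})$ is \emph{immediate} from Lemma~\ref{L:equivalence}, which is exactly the statement that $L$-directional Lipschitz continuity (with respect to $d_0$) and $L$-Lipschitz continuity along forward-backward integral curves coincide (this requires that $\bov$ be $\widehat\boxx$-uniformly continuous, which we assume as in Lemma~\ref{L:equivalence}). The implication $(\ref{ite:0}) \Rightarrow (\ref{ite:2})$ is also essentially trivial: for each fixed $\amf \in \Amf$ the curve $s \mapsto \Zz(s,\amf)$ satisfies $\ddz \Zz(s,\amf) = \bov(\Zz(s,\amf))$ by Definition~\ref{d:multiflow}, so it is a (particular) forward-backward integral curve with $\dot\gamma_0 \equiv 1$, and~\eqref{e:nicelip} follows from~\eqref{E:LrrgergGeneral1} applied to such a curve.

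The substantive direction is $(\ref{ite:2}) \Rightarrow (\ref{ite:0})$: starting from $M$-Euclidean Lipschitz continuity and Lipschitz continuity $L$ only along the selected integral curves $\Zz(\cdot,\amf)$, one must recover Lipschitz continuity $L$ along \emph{every} forward-backward integral curve of $\bov$. My approach is to mimic the competitor construction from the first part of the proof of Lemma~\ref{L:equivalenceDist}, but now reading $\phi$ along the approximating curve instead of measuring its horizontal length. Fix a forward-backward integral curve $\bog : [s,t] \to [0,T]\times\R^n$ of $\bov$, reparameterized so that $|\dot\gamma_0| = 1$ almost everywhere (so that the total variation of $\gamma_0$ on $[s,t]$ equals $|t-s|$), with $\bog(s), \bog(t) \in [0,T]\times\R^n$. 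For each $k \in \N$, using the density of $\Ima\Zz$ pick $\amf^{k,1},\dots,\amf^{k,k}$ with $|\Zz(\gamma_0(s + i(t-s)/k),\amf^{k,i}) - \bog(s + i(t-s)/k)| \leq 1/k^2$, and build the competitor $\bog^k$ as a concatenation of (i) flow-selection arcs $s \mapsto \Zz(s,\amf^{k,i})$ traversed from $t^{k,i-1}$ to $t^{k,i} = \gamma_0(s + i(t-s)/k)$, and (ii) small transverse segments joining $\Zz(t^{k,i},\amf^{k,i})$ to $\Zz(t^{k,i},\amf^{k,i+1})$ (plus two endpoint segments connecting $\bog(s)$ and $\bog(t)$ to the start and end of the chain).

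The crucial quantitative facts, proved already in Lemma~\ref{L:equivalenceDist}, are that $\Ll^1(\Igp{k}) = \sum_{i=1}^k |t^{k,i} - t^{k,i-1}| \to |t-s|$ (the total variation of $\gamma_0$) and that the sum of Euclidean lengths of all transverse segments is bounded by $C/k + C\rho_k (t-s)$ with $\rho_k \downarrow 0$ by $\widehat\boxx$-uniform continuity of $\bov$. Now, applying assumption~(\ref{ite:2}) on each flow-selection arc gives
\[
|\phi(\Zz(t^{k,i-1},\amf^{k,i})) - \phi(\Zz(t^{k,i},\amf^{k,i}))| \leq L |t^{k,i}-t^{k,i-1}|,
\]
and summing yields a contribution at most $L \cdot \Ll^1(\Igp{k})$; the Euclidean $M$-Lipschitz continuity of $\phi$ applied to each transverse segment yields a total contribution at most $M$ times the total transverse length, hence $O(1/k + \rho_k)$. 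Telescoping along $\bog^k$ via the triangle inequality gives
\[
|\phi(\bog(t)) - \phi(\bog(s))| \leq L \Ll^1(\Igp{k}) + O(1/k + \rho_k),
\]
and letting $k \to \infty$ produces~\eqref{E:LrrgergGeneral1} with constant $L$. Together with the preserved $M$-Euclidean Lipschitz hypothesis, this proves~(\ref{ite:0}).

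The main obstacle is psychological rather than technical: one needs to recognize that the geometric approximation machinery already developed to characterize $d_0$ via forward-backward integral curves (the first part of the proof of Lemma~\ref{L:equivalenceDist}) is exactly what is required to \emph{transport} a Lipschitz bound from the chosen flow selections to all forward-backward curves, provided the Euclidean Lipschitz constant $M$ controls the inevitable transverse jumps. The $\widehat\boxx$-uniform continuity of $\bov$ — and nowhere else the continuity of $\bov$ is used — ensures that these jumps vanish in the limit without leaving any residual defect on the right-hand side.
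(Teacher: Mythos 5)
Your proposal is correct and follows essentially the same route as the paper: $(\ref{ite:1})\Leftrightarrow(\ref{ite:0})$ via Lemma~\ref{L:equivalence}, $(\ref{ite:0})\Rightarrow(\ref{ite:2})$ by specialization, and $(\ref{ite:2})\Rightarrow(\ref{ite:0})$ by approximating an arbitrary forward-backward integral curve with a chain of flow-selection arcs and short transverse junctions, charging the arcs to $L$ and the junctions to $M$ times a total transverse length that vanishes by the $\widehat\boxx$-uniform continuity of $\bov$. The paper carries out this chain construction directly in the proof rather than citing the one from Lemma~\ref{L:equivalenceDist}, but the estimates and the telescoping are the same.
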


\begin{proof}
We already proved in Lemma~\ref{L:equivalence} that~\eqref{ite:1} and~\eqref{ite:0} are equivalent, even without assuming the Lipschitz continuity for the Euclidean distance.
Clearly~\eqref{ite:0} implies~\eqref{ite:2}, since the condition in~\eqref{ite:2} only involves specific integral curves of the form $\bog^{\amf}:=\Zz(\cdot,\amf)$ for $\amf\in\Amf$. Hence we only need to show that~\eqref{ite:2} implies~\eqref{ite:0}. To this aim, we fix a forward-backward integral curve $\bog:[0,S]\to\TTT\times\R^{n}$ of $\bov$ and show that
\begin{equation}
\label{E:rggrgqggreg}
|\phi(\bog(s^{2}))- \phi(\bog(s^{1}))| \leq L \cdot (s^{2}-s^{1})
\qquad
\forall \ 0<s^{1}<s^{2}<S \ .
\end{equation}
The strategy is to approximate $\bog$ with a competitor joining $\bog(s^{1})$ and $\bog(s^{2})$ which alternates horizontal segments (junctions) and curves $\Zz(\cdot,\amf^{k,n})$ almost tangent to $\bog$ at some intermediate points.
Simplify notations by fixing $s^{1}=0$ and $s^{2}=1$, as the general case is a simple adaptation.
For $n\in\N$ we define for $k= 1,\dots,n $ the intermediate values $s^{k,n}$ in such a way that
\[
\frac{ k-1}{n}\leq s^{k,n}\leq \frac{ k }{n} \quad \text{and} \quad
\begin{cases}
\gamma_{0}\left(\frac{ k-1}{n}\right)\leq\gamma_{0}\left(s^{k,n} \right)\leq \gamma_{0}\left(\frac{k  }{n}\right)
\quad\text{and}\quad \exists\dot\gamma_{0}(s^{k,n})=1 \quad\text{or otherwise}\\
\gamma_{0}\left(\frac{ k-1}{n}\right)\geq\gamma_{0}\left(s^{k,n} \right)\geq \gamma_{0}\left(\frac{k  }{n}\right)
\quad\text{and}\quad \exists\dot\gamma_{0}(s^{k,n})=-1 \ .
\end{cases}
\]
Set moreover $s^{0,n}:=0$, $s^{n+1,n}:=1$. Set 
\[
t^{k,n} :=\gamma_{0}\left(s^{k,n} \right) 
\quad
k=0,\dots,n+1\ .
\]
Since $\Ima\Zz$ is dense we can pick correspondingly curves
\[
\bog^{k,n}(\cdot):=\Zz(\cdot,\amf^{i,n})
\qquad k=0,\dots,n+1
\]
such that $\amf^{k,n}$ are chosen in order to satisfy $\bog(0)=\bog^{0,n}(t^{0,n}) $, $\bog(1)=\bog^{n+1,n}(t^{n+1,n}) $ and
\[
\left|\bog^{k,n}( t^{k,n})-\bog(s^{k,n}) \right|\leq \sfrac{1}{n^{2}}\qquad
\text{for $k=1,\dots,n $.}
\]
For $ \sfrac{ k-1}{n}\leq s\leq   \sfrac{k+1 }{n} $ and $t$ between $\gamma_{0}\left( \sfrac{ k-1}{n}\right)$ and $\gamma_{0}\left(\sfrac{k+1 }{n} \right)$, $k=1,\dots,n$, we can estimate
\begin{align*}
|\bog( s)-\bog^{k,n}(t)|&\leq\left|\int_{s^{k,n}}^{s}\dot\gamma_{0}(r)\bov(\bog(r))\,dr-\int_{\gamma_{0}(s^{k,n})}^{t}\dot \gamma_{0}(s^{k,n})\bov(\bog^{k,n}(r)) \,dr\right|+\left|\bog^{k,n}( t^{k,n})-\bog(s^{k,n}) \right|
\\
&\leq \norm{\bov}_{\infty}\cdot\frac{2}{n}+\norm{\bov}_{\infty}\cdot\frac{2}{n}+\frac{1}{n^{2}} \leq\frac{5\norm{\bov}_{\infty}}{n} \ .
\end{align*}
Let $\rho>0$ and consider the corresponding modulus of $\widehat\boxx$-continuity $\delta=\delta(\sfrac{\rho}{2})$ of the vector field $\bov$ as in~Definition~\ref{D:cont}. If $\sfrac{5\norm{\bov}_{\infty}}{n}<\delta$ then by the area formula applied with the Lipschitz function $\gamma_{0}$
\begin{align*}
|\bog(s^{k+1,n} )-\bog^{k,n}(t^{k+1,n})|
\leq&
\left|\bog^{k,n}( t^{k,n})-\bog(s^{k,n}) \right|
\\
\notag&+
\left|\int_{\gamma_{0}([s^{k,n},s^{k+1,n}])}
\!\sum_{r\in ( \gamma_{0} )^{-1}(s)} \dot{ \gamma}_{0} (r)\bov\left( \bog\left(r \right)\right)ds -\int_{\gamma_{0}(s^{k,n})}^{\gamma_{0}(s^{k+1,n})} \!\!\!\!\!\!\!\!\!\!\!\!\bov(\bog^{k,n}(r)) \,dr\right|
\\\notag
\leq& \frac{1}{n^{2}}+ \frac{\rho}{2}\cdot |s^{k+1,n}-s^{k,n}|
\leq
\frac{1}{n^{2}} + \frac{\rho}{n}  
\ .
\end{align*}
We applied above the $\widehat\boxx$-uniform continuity of $\bov$.
In particular for $k=1,\dots,n$
\begin{align}
\label{E:sgrgrw}
|\bog^{k+1,n}(t^{k+1,n})-\bog^{k,n}(t^{k+1,n})|
\leq&
|\bog(s^{k+1,n} )-\bog^{k+1,n}(t^{k+1,n})|
+|\bog(s^{k+1,n} )-\bog^{k,n}(t^{k+1,n})|
\\\notag
\leq&
\frac{2}{n^{2}} + \frac{\rho}{n}  
\ .
\end{align}
Similarly $|\bog^{0,n}(t^{1,n})-\bog^{1,n}(t^{1,n})|\leq\sfrac{1}{n^{2}} + \sfrac{\rho}{n}$.
Hence we can now write
\begin{align*} 
 \phi(\bog(1))- \phi(\bog(0))
=& \sum_{k=0}^{n }\phi(\bog^{k,n}(t^{k+1,n}))-\phi(\bog^{k,n}(t^{k,n}))+\phi(\bog^{k+1,n}(t^{k+1,n}))- \phi(\bog^{k,n}(t^{k+1,n}))\,.
\end{align*}
We can estimate by the triangular inequality
\begin{align*}
|\phi(\bog(1))- \phi(\bog(0))| 
&\leq\sum_{k=0}^{n }\left|\phi(\bog^{k,n}(t^{k+1,n}))-\phi(\bog^{k,n}(t^{k,n}))\right| 
+\sum_{k=0}^{n }\left|\phi(\bog^{k+1,n}(t^{k+1,n}))- \phi(\bog^{k,n}(t^{k+1,n})\right| 
\end{align*}
and thus using the $M$-Lipschitz continuity of $\phi$ for the Euclidean distance and the $L$-directional Lipschitz continuity along the curves $\bog^{k,n}:=\Zz(\cdot,\amf^{k,n})$, by~\eqref{E:sgrgrw} we get
\begin{align*}
|\phi(\bog(1))- \phi(\bog(0))| 
&\leq L \sum_{k=0}^{n } ( t^{k+1,n}  - t^{k,n}  ) 
+M\sum_{k=0}^{n }\left| \bog^{k+1,n}(t^{k+1,n}) - \bog^{k,n}(t^{k+1,n} )\right| 
\\
&\leq L +  M\sum_{k=0}^{n} \left(\frac{\rho}{n}  +\frac{2}{n^{2}}\right)
  = L+ M\rho+\frac{4}{n}   \,.
\end{align*}
By the arbitrariness of both $\rho>0$ and $n\in\N$ this proves~\eqref{E:rggrgqggreg} and thus Item~\eqref{ite:1} of the statement. 
\end{proof}

\begin{remark}
\label{r:compar00}
The result in Lemma~\ref{l:compar} does not hold if the domain of $\phi$ is smaller than $[0,T]\times\R^n$, even in the case it is a flow-tube $\mcK$. Consider the continuous vector field $\bov:\R^{2}\to\R^2$ and the multi-flow
 \[
 \bov(x_{0},x_{1})=\binom{1}{3\sqrt[3]{x_{1}^{2}}} 
 \qquad
  \bog_{\amf}(x_{0})=(x_{0},(x_{0}+\amf)^{3})=:\Zz(x_{0},\amf)  
  \quad \amf\in\R\ ,
 \]
(see Figure~\ref{fig:1}). The function $\phi:\mcK\to\R$ defined by
\[
\mcK:= \bog_{0}([0,1])\cup \bog_{-1}([0,1]) \ ,
\qquad
\phi|_{\bog_{0}([0,1])}\equiv0 \ ,
\qquad
\phi|_{\bog_{-1}([0,1])}\equiv1
\]
is trivially $0$-Lipschitz continuous along $\Zz$, in the sense of~\eqref{e:nicelip}. It is however $1$-Lipschitz continuous along the integral curve of $\bov$ defined by ${\overline\bog}(x_{0},x_{1})\equiv(x_{0},0)$ and there is no extension $\widehat\phi$ of $\phi$ which is $\varepsilon$-Lipschitz continuous along such integral curve for $0<\varepsilon<1$. In particular,~\eqref{E:LrrgergGeneral1} cannot hold because $\phi$ itself does not satisfy it.
\end{remark}

\begin{figure}[h]
\centering
\begin{tikzpicture}[scale=1]
\centering 
\draw [domain=0:1,smooth,variable=\x, very thick,color=purple] plot  (\x,{ (\x-0)^3});
\draw [domain=0:1,smooth,variable=\x, very thick,color=purple] plot  (\x,{ (\x-1)^3});
\draw[->,color=black] (-.2,0) -- (1,0) node[above] {$x_0$};
\draw[->,color=black] (0,-1.3) -- (0,1.3) node[right] {$x_1$};
\end{tikzpicture}
\hspace{1cm}
\begin{tikzpicture}[x=.5cm, y=.5cm, style= thick]
\begin{scope}
\clip (-8, -2.5) rectangle (8, 2.5);
\foreach \xo in {-1.75,-1.25,...,9.75}
\draw[xshift=1cm,color=yellow] plot[domain=0:8] (\x,{1/4*(\x-\xo)^3});
\foreach \xo in {-9.75,-9.25,...,1.75}
\draw[xshift=1cm,color=yellow] plot[domain=0:-8] (\x,{1/4*(\x-\xo)^3});
\foreach \xo in {-1.5,-1,...,9}
\draw[xshift=1cm,color=purple] plot[domain=0:8] (\x,{1/4*(\x-\xo)^3});
\foreach \xo in {-10,-9.5,...,2}
\draw[xshift=1cm,color=purple] plot[domain=0.5:-8] (\x,{1/4*(\x-\xo)^3});
\end{scope}
\draw[->,color=black] (-6,0) -- (8.3,0) node[above] {$x_0$};
\draw[->,color=black] (1,-2.5) -- (1,2.7) node[right] {$x_1$};
\draw[color=green,style= very thick] (-6,0)--(8,0);
\end{tikzpicture}
\caption{Examples in Remarks~\ref{r:compar00} and~\ref{r:compar}. 
Left:~The domain $\mcK$ of Remark~\ref{r:compar00}.
Right:~All curves of the multi-flow $\Zz$ in Remarks~\ref{r:compar00} and~\ref{r:compar}. 
The $x_0$-axis is the image of an integral curve $\overline\bog$ of $\bov$,
and is contained in the image of the multi-flow. Nevertheless it is not parametrized by the multi-flow.}
\label{fig:1}
\end{figure}
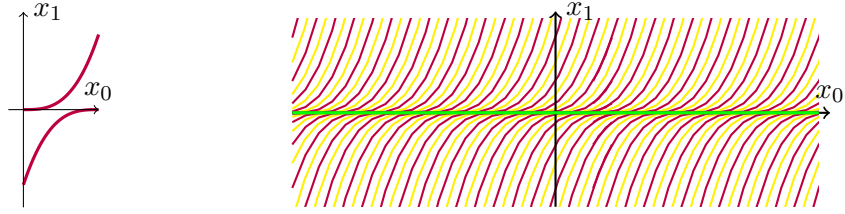

\begin{remark}
\label{r:compar}
The result in Lemma~\ref{l:compar} does not hold if we remove the assumption that $\phi$ is Lipschitz continuous for the Euclidean distance. We show by means of an example that assuming H\"older continuity of $\phi$ would not suffice. 
Consider the function
\[
\phi(x_{0},x_{1}):=\sqrt[3]{x_{0}-\sqrt[3]{x_{1}}} \,,
\]
which is constant on the integral curves $\bog^{\amf}(x_{0})=(x_{0},(x_{0}+\amf)^{3})=:\Zz(x_{0},\amf)$, for $\amf\in\R$, of the continuous vector field $\bov:\R^{2}\to\R^2$ 
 \[
 \bov(x_{0},x_{1})=\binom{1}{3\sqrt[3]{x_{1}^{2}}} \, .
 \]
However, the function $\phi$ is only H\"older continuous along the integral curve of $\bov$ 
 \[
 \overline\bog(x_{0})=\binom{x_{0}}{0} \, ,
 \] 
 which does not belong to the multi-flow $\Zz$. Indeed, there holds $\phi (\overline\bog(x_{0})) = \sqrt[3]{x_{0}}$.
\end{remark}

\section{The direction Lipschitz extension lemma}
\label{S:directionalV}

Let us introduce the notation for the Lipschitz constant of a function with respect to the distance $d_{\lambda}$.

\begin{definition}
\label{D:lipConst}
Let $\phi$ be a given function. For $0 \leq \lambda \leq 1$
we denote by $\Lipl{\lambda}{\phi}$ the best Lipschitz constant for the distance $d_{\lambda}$ of the function $\phi$,
that is
\[
\Lipl{\lambda}{\phi}:=\min\{L\ | \quad \forall \ \boxx,\boy\quad  |\phi(\boy)-\phi(\boxx)|\leq L \,d_{\lambda}(\boxx,\boy)\}\,,
\qquad
\lambda\in[0,1]\ .
\]
We denote by $\Lipl{}{\phi}$ the best Lipschitz constant for the Euclidean distance of the function $\phi$.
\end{definition}

\subsection{Convergence of the Lipschitz constants}

The next property is the key of our argument.

\begin{theorem}
\label{T:corvengenceLipConst}
Consider
\begin{itemize}
\item a vector field $\bov:\TTT\times\R^{n}\to\R^{N}$, $\widehat \boxx$-uniformly continuous as in Definition~\ref{D:cont}, with $v_{0}\equiv1$, 
\item a Borel multi-flow $\Zz:\TTT\times \Amf\to \TTT\times\R^{n}$ of $\bov$ as in Definition~\ref{d:multiflow}, with $\Ima\Zz$ dense, and 
\item a subset $A\subset\Amf$ and a flow-tube $ \mcK=\Zz(\TTT\times A)$ which is compact in $[0,T]\times\R^{n}$.
\end{itemize}
Let $\phi:\mcK\to\R$ be a function which is Lipschitz continuous for the Euclidean distance.
Then the Lipschitz constant $\Lipl{\lambda}{\phi}$ as in Definition~\ref{D:lipConst} decreases to $\Lipl{0}{\phi}$ as $\lambda\downarrow 0$.
\end{theorem}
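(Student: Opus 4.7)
The plan is to prove monotonicity first, and then derive the limiting equality by a compactness and contradiction argument. By Lemma~\ref{L:fwegeg}(1) the distance $d_\lambda$ is monotone non-decreasing as $\lambda\downarrow 0$, so each ratio $|\phi(\boy)-\phi(\boxx)|/d_\lambda(\boxx,\boy)$ and hence $\Lipl{\lambda}{\phi}$ is monotone non-increasing. Combined with the uniform upper bound $\Lipl{\lambda}{\phi}\leq\norm{\bov}_\infty\Lipl{}{\phi}$, inherited from the lower bound on $d_\lambda$ in Lemma~\ref{L:distanceisd}, the limit $L_*:=\lim_{\lambda\downarrow 0}\Lipl{\lambda}{\phi}\geq\Lipl{0}{\phi}$ exists. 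The substantive content of the theorem is the reverse inequality, which I argue by contradiction: assuming $L_*>\Lipl{0}{\phi}+3\varepsilon$, there are $\lambda_k\downarrow 0$ and pairs $\boxx^k,\boy^k\in\mcK$ with
$$|\phi(\boy^k)-\phi(\boxx^k)|\geq(\Lipl{0}{\phi}+2\varepsilon)\,d_{\lambda_k}(\boxx^k,\boy^k),$$
and by compactness of $\mcK$ I extract $\boxx^k\to\boxx^\infty$, $\boy^k\to\boy^\infty$.

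\textbf{Non-collapsing case $\boxx^\infty\neq\boy^\infty$.} For any fixed $\lambda>0$ and eventually $\lambda_k\leq\lambda$, the monotonicity $d_{\lambda_k}\geq d_\lambda$ combined with the continuity of $d_\lambda$ in its two arguments (Lemma~\ref{L:distanceisd}) yields $\liminf_k d_{\lambda_k}(\boxx^k,\boy^k)\geq d_\lambda(\boxx^\infty,\boy^\infty)$; sending $\lambda\downarrow 0$ and using $d_\lambda\uparrow d_0$ gives $\liminf_k d_{\lambda_k}(\boxx^k,\boy^k)\geq d_0(\boxx^\infty,\boy^\infty)$. Passing to the limit in the ratio inequality produces $|\phi(\boy^\infty)-\phi(\boxx^\infty)|\geq(\Lipl{0}{\phi}+2\varepsilon)\,d_0(\boxx^\infty,\boy^\infty)$, which contradicts the definition of $\Lipl{0}{\phi}$ whether $d_0(\boxx^\infty,\boy^\infty)$ is finite or infinite.

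\textbf{Collapsing case $\boxx^\infty=\boy^\infty$ (main obstacle).} Set $r_k:=|\boxx^k-\boy^k|\to 0$; the Euclidean Lipschitz bound on $\phi$ together with the ratio inequality forces $d_{\lambda_k}(\boxx^k,\boy^k)=O(r_k)$. I take a near-optimal competitor $\bog^k$ with $\mass{\bog^k}\leq d_{\lambda_k}(\boxx^k,\boy^k)+r_k/k$, so that $\Ll^1(\Igp{k})=O(r_k)$ and, thanks to the $1/\lambda_k$ weight in the mass, $\Ll^1(\Ignp{k})\leq\lambda_k\mass{\bog^k}=o(r_k)$. Using the identity $\dot\bog^k=\dot\gamma_0^k\,\bov(\bog^k)$ on $\Igp{k}$, substituting $\bov(\bog^k(s))$ by $\bov(\gamma_0^k(s),\widehat\boxx^\infty)$ up to an $o(r_k)$ error via the $\widehat\boxx$-uniform continuity of $\bov$ (the image of $\bog^k$ staying at Euclidean distance $o(1)$ from $\boxx^\infty$), and applying the area formula in $\gamma_0^k$ as in the proof of Lemma~\ref{L:equivalenceDist}, the displacement admits the representation
$$\boy^k-\boxx^k \;=\; \int_{\R}\bov(t,\widehat\boxx^\infty)\,m_k(t)\,dt \;+\; o(r_k),$$
where $m_k$ is the signed multiplicity of $\gamma_0^k$ on $\Igp{k}$, integer-valued and compactly supported near $x_0^\infty$, with $\int m_k = y_0^k-x_0^k$. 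Writing $\boxx^k=\Zz(x_0^k,\amf_k)$ for some $\amf_k\in A$, I define the auxiliary point $\widetilde\boy^k:=\Zz(y_0^k,\amf_k)\in\mcK$ on the integral curve of $\Zz$ through $\boxx^k$; an analogous computation (with multiplicity $\Uno_{[x_0^k,y_0^k]}$, understood with sign when $y_0^k<x_0^k$) gives
$$\widetilde\boy^k-\boxx^k \;=\; \int_{\R}\bov(t,\widehat\boxx^\infty)\,\Uno_{[x_0^k,y_0^k]}(t)\,dt\;+\;o(r_k).$$
Since $\boxx^k$ and $\widetilde\boy^k$ lie on the same integral curve of $\Zz$, Lemma~\ref{L:fwegeg}(3) gives $d_0(\boxx^k,\widetilde\boy^k)=|y_0^k-x_0^k|$, hence $|\phi(\widetilde\boy^k)-\phi(\boxx^k)|\leq\Lipl{0}{\phi}\,|y_0^k-x_0^k|$; together with the Euclidean estimate $|\phi(\boy^k)-\phi(\widetilde\boy^k)|\leq\Lipl{}{\phi}\,|\widetilde\boy^k-\boy^k|$ and the lower bound $d_{\lambda_k}(\boxx^k,\boy^k)\geq|y_0^k-x_0^k|$ from Lemma~\ref{L:fwegeg}(2), the ratio in question is bounded by $\Lipl{0}{\phi}+\Lipl{}{\phi}\,|\widetilde\boy^k-\boy^k|/|y_0^k-x_0^k|$. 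The desired contradiction thus reduces to the estimate $|\widetilde\boy^k-\boy^k|=o(|y_0^k-x_0^k|)$, equivalently the control $\int\bov(\cdot,\widehat\boxx^\infty)(m_k-\Uno_{[x_0^k,y_0^k]})\,dt=o(r_k)$. This last step is the main technical obstacle: a priori $m_k$ may exhibit $O(r_k)$-size back-and-forth loops (introduced by switches between integral curves $\Zz(\cdot,\amf_i)$ in the multi-flow), and one has to combine the near-optimality of $\bog^k$ with a further exploitation of the $\widehat\boxx$-uniform continuity of $\bov$ to show these loops contribute only $o(r_k)$ when integrated against $\bov(\cdot,\widehat\boxx^\infty)$, thereby closing the argument.
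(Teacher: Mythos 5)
Your architecture matches the paper's: monotonicity of $\Lipl{\lambda}{\phi}$ in $\lambda$, a contradiction argument, and a dichotomy according to whether the extremal pairs collapse. Your non-collapsing case is correct and actually more direct than the paper's (which runs a Dini-type argument on the functions $f_{\mu}(\boxx,\boy)=|\phi(\boxx)-\phi(\boy)|-\ell_{0}\,d_{\mu}(\boxx,\boy)$ over the compact set of pairs at distance at least $\overline\delta$); your route via continuity of each $d_{\lambda}$ and lower semicontinuity of $d_{0}=\sup_{\lambda}d_{\lambda}$ reaches the same contradiction with less machinery. The collapsing case is also set up exactly as in the paper: your $\widetilde\boy^{k}=\Zz(y_{0}^{k},\amf_{k})$ is the paper's $\boz^{\lambda}$, your interpolation through $\phi(\widetilde\boy^{k})$ is the paper's inequality~\eqref{E:wqfaow233bryuoawgygrw}, and everything reduces, as you say, to showing that the transversal gap $|\widetilde\boy^{k}-\boy^{k}|$ is negligible compared to $|y_{0}^{k}-x_{0}^{k}|$.

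However, you leave precisely that estimate unproven, flagging the control of the ``back-and-forth loops'' of $\gamma_{0}^{k}$ as an open obstacle; as written the argument is therefore incomplete at its decisive step. The obstacle is illusory. For any bounded Borel $g$ of the time variable alone, the signed change of variables is \emph{exact}:
\[
\int_{\Igp{k}}\dot\gamma_{0}^{k}(s)\,g\bigl(\gamma_{0}^{k}(s)\bigr)\,ds=\int_{x_{0}^{k}}^{y_{0}^{k}}g(t)\,dt\,,
\]
since $\dot\gamma_{0}^{k}=0$ on $\Ignp{k}$ and the left-hand side equals $G(\gamma_{0}^{k}(S))-G(\gamma_{0}^{k}(0))$ for a primitive $G$ of $g$ (equivalently, your signed multiplicity satisfies $m_{k}=\sgn(y_{0}^{k}-x_{0}^{k})\,\Uno_{[x_{0}^{k}\wedge y_{0}^{k},\,x_{0}^{k}\vee y_{0}^{k}]}$ almost everywhere, by degree theory in one dimension). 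So the loops contribute \emph{nothing} when integrated against a function of $t$ only; the entire error comes from replacing $\bov(\bog^{k}(s))$ by the vector field evaluated at the \emph{matching time} $\gamma_{0}^{k}(s)$ on the reference curve, i.e.\ from
\[
\int_{\Igp{k}}\dot\gamma_{0}^{k}(s)\Bigl[\bov\bigl(\bog^{k}(s)\bigr)-\bov\bigl(\Zz(\gamma_{0}^{k}(s),\amf_{k})\bigr)\Bigr]\,ds\,,
\]
whose integrand is at most $\rho$ by the $\widehat\boxx$-uniform continuity (the two points share the time coordinate and both lie within the modulus $\delta(\rho)$ of $\overline\boxx$), giving an error $\rho\,\Ll^{1}(\Igp{k})=O(\rho\, r_{k})$. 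This is exactly how the paper closes Step~1. Once this is in place you also obtain $r_{k}\leq |\widetilde\boy^{k}-\boy^{k}|+\norm{\bov}_{\infty}|y_{0}^{k}-x_{0}^{k}|$, hence $|y_{0}^{k}-x_{0}^{k}|\geq r_{k}/(2\norm{\bov}_{\infty})$ for large $k$, which repairs the division by $|y_{0}^{k}-x_{0}^{k}|$ in your final ratio (a priori that quantity could vanish while $r_{k}>0$). With this cancellation supplied your proof is complete; without it, it is not.
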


We observe that the result in the theorem is trivial when we compare points connected by 
integral curves in the multi-flow. Indeed, if $\boxx=\Zz(x_{0},\amf)$ and $\boy=\Zz(y_{0},\amf)$ for some $\amf\in\Amf$,
then by Item~\ref{ite:smallchar} in Lemma~\ref{L:fwegeg}
\begin{equation}\label{e:L1}
\left|\phi(\boxx)-\phi(\boy) \right| \leq  \Lipl{0}{\phi} \cdot d_{0}(\boxx,\boy) =  \Lipl{0}{\phi} \cdot d_{\lambda}(\boxx,\boy)
\quad\text{for $0\leq\lambda\leq1 $,}
\end{equation}
hence $\Lipl{0}{\phi}$ is a Lipschitz constant for $\phi$ evaluated on such points.

\begin{proof}[Proof of Theorem~\ref{T:corvengenceLipConst}]
Lemma~\ref{L:fwegeg} ensures that the distances $d_{\lambda}$ are increasing for $\lambda \downarrow 0$, therefore the Lipschitz constants $\Lipl{\lambda}{\phi}$ are decreasing for $\lambda \downarrow 0$. Moreover, $\Lipl{0}{\phi} \leq \Lipl{\lambda}{\phi}$ for any $0< \lambda \leq 1$, and in particular $\Lipl{0}{\phi}\in\R$. We denote
\begin{equation}
\label{E:rgqgnjnfjf}
L:=\Lipl{0}{\phi} \ ,
\qquad \qquad
M:=\Lipl{}{\phi}\ .
\end{equation}
Fix $\varepsilon>0$. We want to prove that 
\begin{equation}\label{e:todo}
\exists \overline \lambda=\overline \lambda(\varepsilon)>0
\qquad
:\qquad \left|\phi(\boxx)-\phi(\boy) \right| <  (L+\varepsilon) \,d_{\overline\lambda_{}}(\boxx,\boy) 
\qquad \forall \ \boxx,\boy\in \mcK \,, \quad \boxx\neq\boy \ ,
\end{equation}
from which the inequality will also hold for $0\leq \lambda< \overline\lambda$ due to the fact that $d_{\lambda_{}}(\boxx,\boy)$ monotonically increases for $\lambda \downarrow 0$. We are going to prove~\eqref{e:todo} by contradiction. If~\eqref{e:todo} does not hold, then
\begin{align}
\label{E:vouofiuirb}
\forall  \lambda >0
\qquad \exists\ \boxx^{\lambda},\boy^{\lambda}\in \mcK\,,\ \boxx^{\lambda}\neq\boy^{\lambda}
\qquad
:\qquad \big|\phi(\boxx^{\lambda})-\phi(\boy^{\lambda}) \big| \geq  (L+\varepsilon) \,d_{ \lambda_{}}(\boxx^{\lambda},\boy^{\lambda}) 
\ .
\end{align}
In the two steps below we show that~\eqref{E:vouofiuirb} yields a contradiction.
In {\sc Step 1} we consider the case when there exists a sequence of values $\lambda_{k}\downarrow0$ and points $\boxx^{\lambda_{k}}$ and $\boy^{\lambda_{k}}$ satisfying~\eqref{E:vouofiuirb} and for which $|\boxx^{\lambda_{k}}-\boy^{\lambda_{k}}|\to0$.
In {\sc Step 2} we consider the case when there exists no sequence of values $\lambda_{k}\downarrow0$ and points 
$\boxx^{\lambda_{k}}$ and $\boy^{\lambda_{k}}$ as in~\eqref{E:vouofiuirb} for which $|\boxx^{\lambda_{k}}-\boy^{\lambda_{k}}|\to0$.
We remark that only in {\sc Step 1}  the continuity of the vector field plays a role.

\newcommand{\getsizes}[2]
{   \path (current bounding box.south west);
    \pgfgetlastxy{\xsw}{\ysw}
    \path (current bounding box.north east);
    \pgfgetlastxy{\xne}{\yne}
    \pgfmathsetmacro{\picwidth}{(\xne-\xsw)/28.453}
    \pgfmathsetmacro{\picheight}{(\yne-\ysw)/28.453}
    \pgfmathsetmacro{\picxscale}{#1/\picwidth}
    \pgfmathsetmacro{\picyscale}{#2/\picheight}
    \xdef\xsca{\picxscale}
    \xdef\ysca{\picyscale}
}
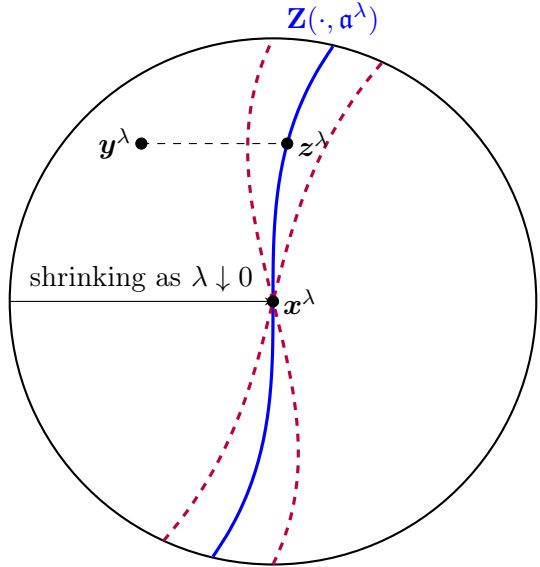
\begin{figure}[h]\centering
\begin{tikzpicture}[font=\normalsize]
\begin{scope}[scale=7, yshift=.85cm]
      \draw[thick] (0,0) circle (.5cm);
      \draw[domain=-.485:.485,smooth,variable=\y,very thick,blue] plot ({\y^3},{\y}) node[above] {$\Zz(\cdot,\amf^{\lambda})$};
      \draw[domain=-.455:.455,smooth,variable=\y,dashed,very thick,purple] plot ({\y^3+\y/4},{\y}) node[above] {};
      \draw[domain=-.5:.5,smooth,variable=\y,dashed,very thick,purple] plot ({\y^3-\y/4},{\y}) node[above] {};
      \filldraw (.027,.3)circle (.3pt) node [below, right] {$\boz^{\lambda}$};
      \filldraw (-0.25,0.3)circle (.3pt) node [below, left] {$\boy^{\lambda}$};
      \filldraw (0,0) circle (.3pt) node[below, right] {$\boxx^{\lambda}$};
      \draw[dashed] (-0.25,0.3)--(0.027,0.3);
      \draw[-stealth] (-.5,0)--(0,0) node[pos=.5,above ]{$\text{shrinking as }\lambda\downarrow0$}; 
\end{scope}
\end{tikzpicture}
\caption{Step~1 in the proof of Theorem~\ref{T:corvengenceLipConst}. We define $\boz^{\lambda}$ along $\Zz(\cdot,\amf^{\lambda})$ with the same $0$-coordinate of  $\boy^{\lambda}$. 
We first prove inequality~\eqref{E:rtthhhhhrrh}, which means that $\boz^{\lambda}$ lies within the dashed region $|\boz^{\lambda}-\boy^{\lambda}| \leq  \sfrac{\varepsilon}{2M}|x_{0}^{\lambda}-y_{0}^{\lambda}| 
$, then we show that this contradicts~\eqref{E:vouofiuirb}.}
\label{fig:case3}
\end{figure}

\medskip

\noindent\underline{\sc Step 1}: Assume that there exist values $\lambda_{k}\downarrow0$ and points 
$\boxx^{\lambda_{k}}$ and $\boy^{\lambda_{k}}$ as in~\eqref{E:vouofiuirb} with 
\begin{equation}\label{e:noclose}
|\boxx^{\lambda}-\boy^{\lambda}|\to0\ .
\end{equation}
By compactness, up to subsequences $\boxx^{\lambda}$ and $\boy^{\lambda}$ are converging to some point $\overline\boxx\in\mcK$. 
Let $\rho>0$ so that
\begin{align}
\label{E:dergrigonng}
\frac{ L \rho}{L- (1+\rho) \norm{\bov}_{\infty} {M} }
<\frac{\varepsilon}{2M}
\end{align}
and denote by $\delta=\delta(\rho)$ the corresponding modulus of $\widehat\boxx$-uniformly continuity of $\bov$ as in Definition~\ref{D:cont}.

Choose points $\boz^{\lambda}\in\mcK$ with $y_{0}^{\lambda}=z_{0}^{\lambda}$ and such that $\boxx^{\lambda}=\Zz(x_{0}^{\lambda},\amf^{\lambda})$ and $\boz^{\lambda}=\Zz(y_{0}^{\lambda},\amf^{\lambda})$ (see Figure~\ref{fig:case3}). This is possible by the definition of flow-tube.
Since $\phi$ is $M$-Lipschitz continuous on $\mcK$ and $L$-directionally Lipschitz continuous, interpolating with $\pm\phi(\boz^{\lambda})$ one has the inequality
\begin{equation}
\label{E:wqfaow233bryuoawgygrw}
\left|\phi(\boxx^{\lambda})-\phi(\boy^{\lambda}) \right| \leq   L |x_{0}^{\lambda}-y_{0}^{\lambda}|+M|\boz^{\lambda}-\boy^{\lambda}| \,.
\end{equation}
If we assume that both inequalities~\eqref{E:vouofiuirb} and~\eqref{E:wqfaow233bryuoawgygrw} hold we must have
\[
d_{\lambda}(\boxx^{\lambda},\boy^{\lambda})
<
 |x_{0}^{\lambda}-y_{0}^{\lambda}|+\frac{M}{L}|\boz^{\lambda}-\boy^{\lambda}| \,.
\]
By Definition~\ref{D:distances} of $d_{\lambda}(\boxx^{\lambda},\boy^{\lambda})$ we can thus fix a competitor $\bog^{\lambda}$ joining $\boxx^{\lambda}$ and $\boy^{\lambda}$ such that
\[
\mass{\bog^{\lambda}}=\Ll^{1}\left(\Igp{\lambda}\right) + \frac{1}{\lambda}\,\Ll^{1}\left(\Ignp{\lambda}\right)
<
 |x_{0}^{\lambda}-y_{0}^{\lambda}|+\frac{M}{L}|\boz^{\lambda}-\boy^{\lambda}| \,.
\]
Multiplying by $\lambda$ and reordering the terms we get
\begin{equation}
\label{E:242q8t3huyrgobyg4}
0\leq
\lambda\left(\Ll^{1}\left(\Igp{\lambda}\right) -  |x_{0}^{\lambda}-y_{0}^{\lambda}|\right)
<
\lambda \frac{M}{L}|\boz^{\lambda}-\boy^{\lambda}| -\Ll^{1}\left(\Ignp{\lambda}\right) \,.
\end{equation}
Equation~\eqref{E:242q8t3huyrgobyg4} has the following consequences:
\begin{enumerate}
\item
\label{item:rggfgrg}
 $\Ll^{1}\left(\Ignp{\lambda}\right) < \lambda \frac{M}{L}|\boz^{\lambda}-\boy^{\lambda}|$. This means that an increasingly smaller portion of $|\boz^{\lambda}-\boy^{\lambda}|$ is run by the competitor along horizontal paths.
\item 
\label{item:rggt23553fgrg}
$\Ll^{1}\left(\Igp{\lambda}\right) \leq |x_{0}^{\lambda}-y_{0}^{\lambda}|+\frac{M}{L}|\boz^{\lambda}-\boy^{\lambda}|$. 
\item 
\label{E:modulusitem}
By the previous two points $\Ll^{1}(\Ig{\lambda})$ is vanishingly small.
Since
\begin{align*}
|\bog^{\lambda}(s)-\overline\boxx | 
&\leq 
|\bog^{\lambda}(s) -\boxx^{\lambda} |+ |\boxx^{\lambda} -\overline\boxx | 
\leq \norm{\bov}_{\infty}\Ll^{1}(\Ig{\lambda}) +  |\boxx^{\lambda} -\overline\boxx | \\
|\boz^{\lambda}  -\boxx^{\lambda} |
&\leq \norm{\bov}_{\infty}|y_{0}^{\lambda} -x_{0}^{\lambda} | \\
|\Zz(r,\amf^{\lambda})  -\boxx^{\lambda} |
&\leq \norm{\bov}_{\infty}|y_{0}^{\lambda} -x_{0}^{\lambda} |
\end{align*}
and since $\boxx^{\lambda} \to \overline \boxx$ and $\boy^{\lambda}\to \overline \boxx$, there exists $\overline \lambda$ such that for $0<\lambda<\overline\lambda $
\[
|\bog^{\lambda}(s)-\overline\boxx | + |\boxx^{\lambda}-\overline\boxx | + |\boy^{\lambda}-\overline\boxx | +|\Zz(r,\amf^{\lambda})-\overline\boxx | < \delta/2
 \qquad \text{for all $s\in\Ig{\lambda}$ and all $x_{0}^{\lambda} \leq r\leq y_{0}^{\lambda}$.}
 \]
In particular $ |\boz^{\lambda}-\overline\boxx | \leq \delta/2$.
\end{enumerate}
Since $\bog^{\lambda}$ is a competitor joining $\boxx^{\lambda}$ and $\boy^{\lambda}$, by Point~\eqref{item:rggfgrg} above 
\begin{align}
\label{E:epiporpkw}
\left | \boxx^{\lambda}- \boy^{\lambda} - \int_{\Igp{\lambda}} \dot\gamma_{0}^{\lambda}(s) \bov(\bog^{\lambda}(s))ds\right|\leq \norm{\bov}_{\infty}\Ll^{1}\left(\Ignp{\lambda}\right)
<  \lambda \norm{\bov}_{\infty} \frac{M}{L}|\boz^{\lambda}-\boy^{\lambda}|\,.
\end{align}
Since we stay within the modulus of $\widehat\boxx$-uniformly continuity of $\bov$ by~\eqref{E:modulusitem} and by the triangular inequality we have
\begin{align*}
\left | \int_{x_{0}^{\lambda}}^{y_{0}^{\lambda}}  \bov(\Zz(s,\amf^{\lambda}))ds - \int_{\Igp{\lambda}}   \dot\gamma_{0}^{\lambda}(s)\bov(\bog^{\lambda}(s))ds\right|
&=\left | \int_{\Igp{\lambda}}   \dot\gamma_{0}^{\lambda}(s)\bov(\Zz(\gamma_{0}^{\lambda}(s),\amf^{\lambda}))ds - \int_{\Igp{\lambda}}   \dot\gamma_{0}^{\lambda}(s)\bov(\bog^{\lambda}(s))ds\right|
\\
&\leq \rho \Ll^{1}\left(\Igp{k}\right) \leq \rho |x_{0}^{\lambda}-y_{0}^{\lambda}|+\rho\frac{M}{L}|\boz^{\lambda}-\boy^{\lambda}| \ ,
\end{align*}
where we also applied the estimate in Point~\eqref{item:rggt23553fgrg} above. Collecting the last two inequalities we get
\begin{align}
\label{E:epipor425245pkw}
|\boz^{\lambda}-\boy^{\lambda}|=\left | \boxx^{\lambda}- \boy^{\lambda} + \int_{x_{0}^{\lambda}}^{y_{0}^{\lambda}}  \bov(\Zz(s,\amf^{\lambda}))ds\right| 
<  \rho |x_{0}^{\lambda}-y_{0}^{\lambda}|+ (\lambda+\rho) \norm{\bov}_{\infty} \frac{M}{L}|\boz^{\lambda}-\boy^{\lambda}| 
\end{align}
and thus by the smallness of $\rho$ as in~\eqref{E:dergrigonng}
\begin{align}
\label{E:rtthhhhhrrh}
|\boz^{\lambda}-\boy^{\lambda}| \leq
\frac{ L \rho}{L- (\lambda+\rho) \norm{\bov}_{\infty} {M} } |x_{0}^{\lambda}-y_{0}^{\lambda}| 
 <  \frac{\varepsilon}{2M}|x_{0}^{\lambda}-y_{0}^{\lambda}| 
\,.
\end{align} 
Going back to the inequality~\eqref{E:wqfaow233bryuoawgygrw}, together with~\eqref{E:rtthhhhhrrh} we obtain for $0<\lambda< \overline\lambda $
\begin{align}\label{e:L4}
\left|\phi(\boxx^{\lambda})-\phi(\boy^{\lambda}) \right| &\leq   L |x_{0}^{\lambda}-y_{0}^{\lambda}|+M|\boz^{\lambda}-\boy^{\lambda}| 
\\
 &< (L+\sfrac{\varepsilon}{2})|x_{0}^{\lambda}-y_{0}^{\lambda}|  \notag
\\
& \notag\leq (L+\sfrac{\varepsilon}{2}) d_{\lambda}(\boxx^{\lambda},\boy^{\lambda}) 
\end{align}
where the last inequality is  due to Property~\eqref{ite:qqreoihr} in Lemma~\ref{L:fwegeg}.
We reached a contradiction with~\eqref{E:vouofiuirb}, therefore~\eqref{e:noclose} cannot hold. 

\medskip

\noindent\underline{\sc Step 2}: We have shown in {\sc Step 1} that for a sequence of values $\lambda_{k}\downarrow0$ and points $\boxx^{\lambda_{k}}$ and $\boy^{\lambda_{k}}$ satisfying~\eqref{E:vouofiuirb} 
 it is not possible to have $|\boxx^{\lambda_{k}}-\boy^{\lambda_{k}}|\to0$.
Since $\boxx^{\lambda_{k}}$ and $\boy^{\lambda_{k}}$ belong in the compact set~$\mcK\times\mcK$, they must thus have positive distance from the diagonal. More precisely, denoting by $\overline\delta$ the strictly positive distance among the disjoint compact sets 
\[
\clos\left(\{(\boxx^{\lambda_{k}},\boy^{\lambda_{k}})\}_{k}\right)
\qquad
\text{and}
\qquad 
\Big\{(\boxx,\boy)\in\mcK\times\mcK\ :\ |\boxx-\boy|\not=0\Big\} \ ,
\]
we have 
\begin{align}
\label{E:frpreg}
\clos\left(\{(\boxx^{\lambda_{k}},\boy^{\lambda_{k}})\}_{k}\right)\subseteq \Big\{(\boxx,\boy)\in\mcK\times\mcK\ :\ |\boxx-\boy|\geq\overline\delta\Big\}\ .
\end{align}
We are going to derive a contradiction in this case too.
We do this by proving that the real constants
\[
\ell_{\mu}:=\min\{ \ell \;\;|\;\; \left|\phi(\boxx)-\phi(\boy) \right|\leq \ell \, d_{\mu}(\boxx,\boy) \quad \text{for $\boxx,\boy\in \mcK$ s.t.~$|\boxx-\boy|\geq \overline\delta $} \}
\]
decrease to the value $\ell_{0}$, which satisfies $\ell_{0}\leq L$, defined as
\begin{equation}
\label{E:l0}
\ell_{0}:=\min\{ \ell \;\;|\;\; \left|\phi(\boxx)-\phi(\boy) \right|\leq \ell \, d_{0}(\boxx,\boy)\quad \text{for $\boxx,\boy\in \mcK$ s.t.~$|\boxx-\boy|\geq \overline\delta $}  \} .
\end{equation}
and therefore it implies that there exists $\overline\mu=\overline\mu(\varepsilon,\overline\delta)$ such that
\begin{equation}\label{e:L2}
 \left|\phi(\boxx)-\phi(\boy) \right|\leq (L+\sfrac{\varepsilon}{2}) \, d_{\mu}(\boxx,\boy) \quad \text{for $\boxx,\boy\in \mcK$ with~$|\boxx-\boy|\geq \overline\delta $ and $0\leq\mu<\overline \mu$.}
\end{equation}
Equation~\eqref{e:L2} entails that~\eqref{E:vouofiuirb} and~\eqref{E:frpreg} are incompatible, yielding the desired contradiction. 

Let us thus prove that $\ell_{\mu}\downarrow\ell_{0}$.
Since distances $d_{\mu}$ increase monotonically as $\mu\downarrow0$, the constants $\ell_{\mu}$ decrease to some value $\bar\ell$. We want to show that $\bar\ell=\ell_{0}$. 
Suppose that
\[
\text{by contradiction}
\quad
\eta_{\mu}:=\ell_{\mu}-\ell_{0}\downarrow\bar\ell-\ell_{0}=: \eta> 0 
\quad\text{as $\mu\downarrow0$.}
\]
Define for $0\leq\mu \leq 1$ the functions
\[
f_{\mu}(\boxx,\boy) :=\left|\phi(\boxx)-\phi(\boy) \right|- \ell_{0} \, d_{\mu}(\boxx,\boy)
\qquad
\text{for $\boxx,\boy\in \mcK$ with~$|\boxx-\boy|\geq \overline\delta $.}
\]
Observe that for $0<\mu\leq 1$ the functions $f_{\mu}$ are upper semicontinuous functions which decrease pointwise to the possibly $(-\infty)$-valued function $f_{0}$.
Since the domain is compact, the super-level sets $K_{t}^{\mu}:=\{f_{\mu}\geq t\}$ are compact sets decreasing as $\mu\downarrow0$. As a consequence
\[
\max_{\substack{ \boxx,\boy\in \mcK\\ |\boxx-\boy|\geq \overline\delta}} f_{\mu}(\boxx,\boy)
=
\sup_{\substack{ \boxx,\boy\in \mcK\\ |\boxx-\boy|\geq \overline\delta}} f_{\mu}(\boxx,\boy)
\quad \big\downarrow\quad 
\sup_{\substack{ \boxx,\boy\in \mcK\\ |\boxx-\boy|\geq \overline\delta}} f_{0}(\boxx,\boy)
=\max_{\substack{ \boxx,\boy\in \mcK\\ |\boxx-\boy|\geq \overline\delta}} f_{0}(\boxx,\boy)
\qquad
\text{as $\mu\downarrow0$.}
\]
We stress that for $\mu=0$ the supremum is attained by lower semicontinuity of $d_{0}$
(recall Item~\ref{ite:distanceconvergence} in Lemma~\ref{L:fwegeg}). 
Notice that by definition~\eqref{E:l0} of $\ell_{0}$ we have
\[
\max_{\substack{ \boxx,\boy\in \mcK\\ |\boxx-\boy|\geq \overline\delta}} f_{0}(\boxx,\boy)=\sup_{\substack{ \boxx,\boy\in \mcK\\ |\boxx-\boy|\geq \overline\delta}} f_{0}(\boxx,\boy) = 0 \ .
\]

We thus reach a contradiction if we show the lower bound
\begin{equation}
\label{E:gtqhwhr}
\max_{\substack{ \boxx,\boy\in \mcK\\ |\boxx-\boy|\geq \overline\delta}} f_{\mu}(\boxx,\boy)
=\sup_{\substack{ \boxx,\boy\in \mcK\\ |\boxx-\boy|\geq \overline\delta}} f_{\mu}(\boxx,\boy) \geq \frac{\eta\overline\delta}{ \norm{\bov}_{\infty} } \,.
\end{equation}
To this aim, we fix $\boxx,\boy\in \mcK$ with $|\boxx-\boy|\geq \overline\delta$ and 
\begin{equation}
\label{E:geregegeger}
f_{\mu}(\boxx,\boy)\geq0 \ ,
\quad\text{for instance s.t.}\quad
\left|\phi(\boxx)-\phi(\boy) \right|= \ell_{\mu} d_{\mu}(\boxx,\boy) 
\geq (\ell_{0} +\eta )  d_{\mu}(\boxx,\boy) \ .
\end{equation}
By~\eqref{E:rgqgrqg} and by the definition of $\ell_{\mu}$, on the domain of $f_{\mu}$ one has the estimate
\begin{align*}
f_{\mu}(\boxx,\boy)
&= \left|\phi(\boxx)-\phi(\boy) \right|- \ell_{\mu} d_{\mu}(\boxx,\boy)+( \ell_{\mu}- \ell_{0}) d_{\mu}(\boxx,\boy)
&&\text{since $\ell_{\mu}- \ell_{0}\geq\eta$}
\\
&\geq \left|\phi(\boxx)-\phi(\boy) \right|- \ell_{\mu} d_{\mu}(\boxx,\boy)+\eta d_{\mu}(\boxx,\boy)
&&\text{by~\eqref{E:rgqgrqg}}
\\
&\geq  \left|\phi(\boxx)-\phi(\boy) \right|- \ell_{\mu} d_{\mu}(\boxx,\boy)+\eta \frac{|\boy-\boxx|}{ \norm{\bov}_{\infty} } 
&&\text{as $|\boy-\boxx|\geq\overline\delta $}
\\
&\geq  \left|\phi(\boxx)-\phi(\boy) \right|- \ell_{\mu} d_{\mu}(\boxx,\boy)+ \frac{\eta\overline\delta}{ \norm{\bov}_{\infty} } 
\geq \frac{\eta\overline\delta}{ \norm{\bov}_{\infty} } 
&& \text{by~\eqref{E:geregegeger}.}
\end{align*}

We have thus shown~\eqref{E:gtqhwhr} and reached a contradiction. Hence, $\ell_{\mu} \downarrow \ell_{0}\leq L$
as $\mu\downarrow0$. 
\end{proof}

\subsection{The directional Lipschitz extension lemma}

We now state and prove our main result, the directional Lipschitz extension lemma, the proof of which relies 
on Theorem~\ref{T:corvengenceLipConst}. 

\begin{theorem}[Directional Lipschitz extension]
\label{T:LipschitzEstention}
Fix $L, M,\varepsilon>0$ and consider
\begin{itemize}
\item a vector field $\bov:\TTT\times\R^{n}\to\R^{N}$, $\widehat \boxx$-uniformly continuous as in Definition~\ref{D:cont}, with $v_{0}\equiv1$, 
\item a Borel multi-flow $\Zz:\TTT\times \Amf\to \TTT\times\R^{n}$ of $\bov$ as in Definition~\ref{d:multiflow}, with $\Ima\Zz$ dense, and 
\item a subset $A\subset\Amf$ and a flow-tube $ \mcK=\Zz(\TTT\times A)$ which is compact in $[0,T]\times\R^{n}$.
\end{itemize}
Consider a Borel function $\phi: \mcK\to\R$ which is $M$-Lipschitz continuous for the Euclidean distance and $L$-directionally Lipschitz continuous. Then $\phi$ admits an extension $\widetilde\phi_{}:[0,T]\times\R^{n}\to\R$ 
which is Lipschitz continuous for the Euclidean distance and $(L+\varepsilon)$-directionally Lipschitz continuous.
\end{theorem}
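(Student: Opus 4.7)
My strategy is to interpolate between the Euclidean and the forward-backward geodesic distance by means of the intermediate distances $d_\lambda$ with $\lambda>0$. By Lemma~\ref{L:distanceisd} each such $d_\lambda$ is a genuine distance on the entire space $[0,T]\times\R^n$, equivalent to the Euclidean one; on the other hand, by Lemma~\ref{L:fwegeg} we have $d_\lambda \leq d_0$, so Lipschitz continuity for $d_\lambda$ implies Lipschitz continuity with the same constant for $d_0$, that is, directional Lipschitz continuity. The plan is therefore to fix a single small~$\overline\lambda$ that simultaneously encodes enough of $d_0$ to keep the directional Lipschitz constant close to $L$ and enough of the Euclidean distance to control the extension globally, and then to use the classical McShane--Whitney formula with respect to $d_{\overline\lambda}$.

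\textbf{Step 1.} By hypothesis $\Lipl{0}{\phi}\leq L$. Applying Theorem~\ref{T:corvengenceLipConst} to $\phi$ on the compact flow-tube $\mcK$ and using the monotone convergence $\Lipl{\lambda}{\phi}\downarrow \Lipl{0}{\phi}$ as $\lambda\downarrow 0$, I choose $\overline\lambda=\overline\lambda(\varepsilon)\in(0,1]$ such that
\[
|\phi(\boxx)-\phi(\boy)| \leq (L+\varepsilon)\, d_{\overline\lambda}(\boxx,\boy) \qquad \forall\,\boxx,\boy\in\mcK\,,
\]
i.e.\ $\phi$ is $(L+\varepsilon)$-Lipschitz on $\mcK$ with respect to $d_{\overline\lambda}$.

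\textbf{Step 2.} Because $d_{\overline\lambda}$ is a distance on all of $[0,T]\times\R^n$ (Lemma~\ref{L:distanceisd}) and $\phi$ is bounded on the compact set $\mcK$, the McShane--Whitney formula
\[
\widetilde\phi(\boxx):=\inf_{\boy\in\mcK}\Bigl\{\phi(\boy)+(L+\varepsilon)\, d_{\overline\lambda}(\boxx,\boy)\Bigr\}
\]
defines a finite function on $[0,T]\times\R^n$. Standard verification shows that $\widetilde\phi$ agrees with $\phi$ on $\mcK$ (the choice $\boy=\boxx$ in the infimum gives $\widetilde\phi(\boxx)\leq\phi(\boxx)$, while Step~1 rules out lower competitors) and that $\widetilde\phi$ is $(L+\varepsilon)$-Lipschitz continuous with respect to $d_{\overline\lambda}$ on the whole of $[0,T]\times\R^n$.

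\textbf{Step 3.} From the single bound
\[
|\widetilde\phi(\boxx)-\widetilde\phi(\boy)| \leq (L+\varepsilon)\, d_{\overline\lambda}(\boxx,\boy) \qquad \forall\,\boxx,\boy\in[0,T]\times\R^n\,,
\]
both required properties of $\widetilde\phi$ follow at once: the monotonicity $d_{\overline\lambda}(\boxx,\boy)\leq d_0(\boxx,\boy)$ from Lemma~\ref{L:fwegeg} gives $(L+\varepsilon)$-directional Lipschitz continuity, while the equivalence $d_{\overline\lambda}(\boxx,\boy)\leq \tfrac{3}{\overline\lambda}|\boy-\boxx|$ from Lemma~\ref{L:distanceisd} gives Lipschitz continuity for the Euclidean distance with constant $\tfrac{3(L+\varepsilon)}{\overline\lambda}$ (which depends on $\varepsilon$, but the statement only asks for some Euclidean Lipschitz bound).

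The main obstacle in this approach is entirely encapsulated in the preceding Theorem~\ref{T:corvengenceLipConst}, which is the only non-soft ingredient: once the convergence $\Lipl{\lambda}{\phi}\downarrow\Lipl{0}{\phi}$ (where continuity of $\bov$ and the geometric structure of competitors enter in a substantive way) is available, the extension step is a direct application of McShane--Whitney with respect to a carefully chosen intermediate metric, and no further delicate construction is needed.
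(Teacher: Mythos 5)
Your proposal is correct and follows essentially the same route as the paper: both apply Theorem~\ref{T:corvengenceLipConst} to fix $\overline\lambda$ with $\Lipl{\overline\lambda}{\phi}\leq L+\varepsilon$, extend by McShane in the metric $d_{\overline\lambda}$, and then read off the two Lipschitz properties from $d_{\overline\lambda}\leq d_0$ (Lemma~\ref{L:fwegeg}) and from the equivalence of $d_{\overline\lambda}$ with the Euclidean distance (Lemma~\ref{L:distanceisd}). The only difference is that you write out the McShane--Whitney infimum formula explicitly, which the paper leaves implicit.
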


\begin{proof}
Theorem~\ref{T:corvengenceLipConst} guarantees the existence of $\overline\lambda>0$ 
small enough so that $\Lipl{\overline\lambda}{{\phi_{|_{\mcK}}}}\leq \Lipl{0}{{\phi_{|_{\mcK}}}}+\varepsilon$. 
It follows that  
\[
\Lipl{\overline\lambda}{{\phi_{|_{\mcK}}}}\leq L+\varepsilon \ .
\]
Consider a Lipschitz continuous extension 
of $\overline\phi$ to $[0,T]\times\R^{n}$, which is provided by McShane Lemma, in the distance $d_{\overline \lambda}$, that is 
$$
\overline \phi : [0,T]\times\R^{n} \to \R \,, \qquad
\left|\overline\phi(\boxx)-\overline\phi(\boy)\right| 
\leq (L+\varepsilon) d_{\overline\lambda}(\boxx,\boy)  \quad \text{for all $\boxx$, $\boy \in [0,T]\times\R^n$.}
$$
This extension $\overline\phi$ is Lipschitz continuous also for the Euclidean distance by Lemma~\ref{L:distanceisd}. Moreover, 
by Property~\eqref{ite:distanceconvergence} in Lemma~\ref{L:fwegeg} we have  
\begin{align*}
\left|\overline\phi(\boxx)-\overline\phi(\boy)\right| 
&\leq (L+\varepsilon) d_{\overline\lambda}(\boxx,\boy) 
\leq (L+\varepsilon) d_{0}(\boxx,\boy) \,. 
 \end{align*}
 Therefore the extension $\overline\phi$ is also $(L+\varepsilon)$-directionally Lipschitz continuous. 
\end{proof}

\section{Applications to the continuity equation} 

In this last section we present our applications to the continuity equation, in particular we prove Theorem~\ref{T:main}. We go back to the time-space notation as in the Introduction. We denote the maximal function in the space variable by
$$
{\mathcal M} \big( \varphi \big) (x) = \sup_{r>0} \frac{1}{ \Ll^{n}(B_r(x))} \int_{B_r(x)} |\varphi(y)|\,dy \,,
$$
and when we will deal with functions of time and space we will always consider the maximal function in the space variable only, for each fixed time. Given a vector field in the Sobolev space $W^{1,p}(\R^n)$ for almost every time, we always consider its continuous-in-space representative.

\medskip

For the classical lemma below and its corollary concerning the uniqueness of integral curves for almost every starting point we acknowledge useful discussions with Pierre-Emmanuel Jabin.
\begin{lemma}\label{l:maximal}
Let $f \in W^{1,p}_{\loc}(\R^n)$ with $p>n$. Then there exists a negligible set $N \subset \R^n$ such that
\begin{equation}\label{e:newmax}
| f( \boxx)-f( \boy)| \leq C \, | \boxx- \boy| \, \big[ {\mathcal M} \big( |Df|^p \big)( \boxx)\big]^{1/p} \qquad
\forall \ \boxx,\boy \in \R^n \setminus N \,,
\end{equation}
where the constant $C$ only depends on $n$ and $p$.
\end{lemma}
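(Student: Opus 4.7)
The plan is to combine the classical dyadic telescoping representation of Lebesgue-point values of $f$ with the Poincar\'e inequality on balls and H\"older's inequality, exploiting the Sobolev exponent $p>n$ to absorb a geometric factor that arises when the dyadic balls shrink around the \emph{wrong} point. First I would fix the continuous Morrey representative of $f$, so that $f(\boxx)$ is well defined at every point as the limit of the ball averages $f_{B_r(\boxx)}$, and take as exceptional set
\[
N:=\bigl\{\boxx\in\R^n\ :\ {\mathcal M}(|Df|^p)(\boxx)=+\infty\bigr\},
\]
which is $\Ll^n$-negligible because $|Df|^p\in L^1_{\loc}(\R^n)$.

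Given $\boxx\in\R^n\setminus N$ and $\boy\neq\boxx$, I would set $r:=|\boxx-\boy|$ and introduce the two shrinking dyadic families $B_k^{\boxx}:=B(\boxx,r/2^k)$ and $B_k^{\boy}:=B(\boy,r/2^k)$, together with the enlarged ball $B_\ast:=B(\boxx,2r)$, which contains every $B_k^{\boy}$. The triangle inequality splits $|f(\boxx)-f(\boy)|$ into the four contributions
\[
\bigl|f(\boxx)-f_{B_0^{\boxx}}\bigr|\,,\quad \bigl|f_{B_0^{\boxx}}-f_{B_\ast}\bigr|\,,\quad \bigl|f_{B_\ast}-f_{B_0^{\boy}}\bigr|\,,\quad \bigl|f_{B_0^{\boy}}-f(\boy)\bigr|.
\]
The first three are the easy pieces: in each case the balls involved are either concentric with $\boxx$ or are a bounded dimensional dilate of $B_\ast$, and the classical Poincar\'e--H\"older telescope
\[
\bigl|f_{B_{k+1}}-f_{B_k}\bigr|\ \leq\ C\,\tfrac{r}{2^k}\,\Bigl(\dashint_{B_k}|Df|^p\Bigr)^{1/p}\ \leq\ C\,\tfrac{r}{2^k}\,\bigl[{\mathcal M}(|Df|^p)(\boxx)\bigr]^{1/p}
\]
sums to $C\,r\,[{\mathcal M}(|Df|^p)(\boxx)]^{1/p}$ without any restriction beyond $p>1$.

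The main obstacle, and the step in which the hypothesis $p>n$ enters crucially, is the last term $\bigl|f_{B_0^{\boy}}-f(\boy)\bigr|$: here the shrinking balls are centered at $\boy$, so a direct estimate would control them by ${\mathcal M}(|Df|^p)(\boy)$, which is not allowed by the one-sided form of the lemma. The plan is to use instead the volume comparison, available since $B_k^{\boy}\subset B_\ast$,
\[
\dashint_{B_k^{\boy}}|Df|^p\ \leq\ \frac{|B_\ast|}{|B_k^{\boy}|}\,\dashint_{B_\ast}|Df|^p\ \leq\ C\,2^{kn}\,{\mathcal M}(|Df|^p)(\boxx),
\]
which turns each term of the Poincar\'e--H\"older telescope for $\{B_k^{\boy}\}_{k}$ into $C\,r\,2^{k(n/p-1)}\,[{\mathcal M}(|Df|^p)(\boxx)]^{1/p}$. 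The geometric series $\sum_{k\geq 0}2^{k(n/p-1)}$ converges precisely because $p>n$, and summing yields the desired bound with a final constant $C=C(n,p)$. If $p\leq n$ the same series would diverge, which is exactly why the assumption $p>n$ cannot be dropped in this one-sided formulation.
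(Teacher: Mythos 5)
Your argument is correct, and it reaches the conclusion by a genuinely different route from the paper. The paper starts from the pointwise potential estimate $|f(\boxx)-f(\boy)| \leq C \int_{B(\boxx,\boy)} |Df|(\boz)\,\bigl( |\boxx-\boz|^{1-n} + |\boy-\boz|^{1-n} \bigr)\, d\boz$ (quoted from Jabin's Lemma~3.1), bounds the term singular at $\boxx$ by the standard Riesz-potential-versus-maximal-function estimate, and handles the term singular at $\boy$ --- the same ``wrong center'' difficulty you isolate --- by H\"older with exponents $p,p'$, where the explicit computation of $\int_{B(\boxx,\boy)} |\boy-\boz|^{-p'(n-1)}\,d\boz$ converges precisely when $p>n$; it is then concluded for smooth $f$ and extended by approximation. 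Your dyadic chaining with Poincar\'e plus the volume comparison $\dashint_{B_k^{\boy}} |Df|^p \leq C\,2^{kn}\,\dashint_{B_\ast}|Df|^p$ is the discrete counterpart of that kernel computation: the series $\sum_k 2^{k(n/p-1)}$ converges under exactly the same condition, so the two proofs use $p>n$ in an equivalent way. What your version buys is self-containedness (only the Poincar\'e inequality on balls is needed, no representation formula, and no smoothing step since you work directly with the continuous Morrey representative); it also shows the estimate holds for \emph{every} $\boy$ once $\boxx$ is a good point, and it generalizes to doubling metric measure spaces supporting a Poincar\'e inequality. The paper's version is shorter given the cited lemma and yields the constant more explicitly. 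One small caveat: for $|Df|^p$ merely in $L^1_{\mathrm{loc}}$ the set where the \emph{global} maximal function is infinite need not be negligible, so your claim about $N$ is not literally justified; but this is harmless, since on that set the right-hand side of \eqref{e:newmax} is $+\infty$ and the inequality is trivial (the paper's statement has the same feature, and its applications use a localized maximal function).
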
 

\begin{proof} 
Let us for the time being assume $f$ to be a smooth function. We start from the well-known inequality
\begin{equation}\label{e:jab}
|f( \boxx)-f( \boy)| \leq C \int_{B( \boxx, \boy)} |Df|( \boz) \left( \frac{1}{| \boxx- \boz|^{n-1}} + \frac{1}{| \boy- \boz|^{n-1}} \right) \, d \boz \,,
\end{equation}
in which $B( \boxx, \boy)$ denotes the ball with center $(x+y)/2$ and diameter $| \boxx- \boy|$ and the constant $C$ depends 
on the space dimension only (see for instance~Lemma 3.1 in~\cite{J}). 

For the first term in the right hand side of~\eqref{e:jab} we can estimate
$$
\int_{B( \boxx, \boy)}   \frac{|Df|( \boz)}{| \boxx- \boz|^{n-1}} \, d \boz \leq C \, | \boxx- \boy| \, {\mathcal M} \big( |Df| \big)( \boxx) \leq C \, | \boxx- \boy| \, \big[ {\mathcal M} \big( |Df|^p \big) ( \boxx) \big]^{1/p} \,,
$$
where for the first inequality we refer again to~\cite{J} and the second inequality follows from the H\"older inequality.

We now deal with the second term in the right hand side of~\eqref{e:jab}. The H\"older inequality implies
\begin{equation}\label{e:stepmax}
\int_{B( \boxx, \boy)} \frac{|Df|( \boz)}{| \boy- \boz|^{n-1}} \, d \boz
\leq \left(  \int_{B( \boxx, \boy)} |Df|^p ( \boz) \, d \boz \right)^{1/p} \left(  \int_{B( \boxx, \boy)} \frac{1}{| \boy- \boz|^{p'(n-1)}} \, d \boz \right)^{1/p'} \,,
\end{equation}
where $\frac{1}{p} + \frac{1}{p'}=1$. The second factor in~\eqref{e:stepmax} can be explicitly computed as
\begin{equation}\label{e:intdim}
\int_{B( \boxx, \boy)} \frac{1}{| \boy- \boz|^{p'(n-1)}} \, d \boz = C \, | \boxx- \boy|^{(1-p')(n-1)+1} \,,
\end{equation}
where $C$ depends on $n$ and $p$. Observe that the integral in~\eqref{e:intdim}
converges if $(1-p')(n-1) > -1$, condition which corresponds to $1/p' > 1-1/n$, which is satisfied since $p>n$. Hence
going back to~\eqref{e:stepmax} 
$$
\begin{aligned}
\frac{1}{| \boxx- \boy|} \int_{B( \boxx, \boy)} \frac{|Df|( \boz)}{| \boy- \boz|^{n-1}} \, d \boz
&\leq 
C \, \frac{1}{| \boxx- \boy|^{1-n/p}} \left( \frac{1}{| \boxx- \boy|^n} \int_{B( \boxx, \boy)} |Df|^p ( \boz) \, d \boz\right)^{1/p} | \boxx- \boy|^{\frac{(1-p')(n-1)+1}{p'}} \\
&= 
C \, \left( \frac{1}{| \boxx- \boy|^n} \int_{B( \boxx, \boy)} |Df|^p ( \boz) \, d \boz\right)^{1/p}\\
&\leq 
C \big[ {\mathcal M} \big( |Df|^p \big) ( \boxx) \big]^{1/p} \,,
\end{aligned}
$$
where in the last inequality we have used the fact that $B( \boxx, \boy)$ is contained in the ball centered at $\boxx$ and with radius $| \boxx- \boy|$. We have therefore
shown~\eqref{e:newmax} for all smooth functions $f$, and by a standard approximation argument for all~$f \in W^{1,p}(\R^n)$ as claimed.
\end{proof}

\begin{corollary}
\label{c:aeuni}
Let $\bobb$ be as in~\eqref{e:bobb} and assume~$p>n$. Then for a.e.~$\widehat \boxx \in \R^n$ there is a unique integral curve of $\bobb$ starting at~$\widehat \boxx$ at time $t=0$.
\end{corollary}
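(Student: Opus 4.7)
The plan is a Gronwall argument based on Lemma~\ref{l:maximal}, using as ``reference'' trajectory the regular Lagrangian flow $\RLF$ of $\bobb$, which exists and is unique under assumption~\eqref{e:bobb} by the DiPerna--Lions theory~\cite{DPL}. Since $p>n$, Morrey's embedding ensures that $\bobb(t,\cdot)$ has a continuous representative for a.e.~$t\in\TTT$; the inequality of Lemma~\ref{l:maximal} then extends, by continuity of the left-hand side and lower semicontinuity of the maximal function on the right, from $\R^n\setminus N$ to all of~$\R^n$. Setting $G(t,\boxx):=[\mathcal{M}(|D\bobb(t,\cdot)|^p)(\boxx)]^{1/p}$ and symmetrising in $\boxx$ and $\boy$, one obtains
\begin{equation}\label{eq:twopt}
|\bobb(t,\boxx)-\bobb(t,\boy)|\leq C\,|\boxx-\boy|\,\min\bigl(G(t,\boxx),G(t,\boy)\bigr)\qquad\forall\,\boxx,\boy\in\R^n\,.
\end{equation}

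If $\gamma_1,\gamma_2:[0,T]\to\R^n$ are two integral curves of $\bobb$ with $\gamma_1(0)=\gamma_2(0)=\widehat\boxx$, then~\eqref{eq:twopt} gives
\[
\frac{d}{dt}|\gamma_1(t)-\gamma_2(t)|\leq C\,|\gamma_1(t)-\gamma_2(t)|\,\min\bigl(G(t,\gamma_1(t)),G(t,\gamma_2(t))\bigr),
\]
so by Gronwall $\gamma_1\equiv\gamma_2$ provided $\int_0^T G(t,\gamma_i(t))\,dt<\infty$ for at least one of $i=1,2$. It therefore suffices to prove that, for a.e.~$\widehat\boxx\in\R^n$, the single trajectory $t\mapsto\RLF(t,\widehat\boxx)$ enjoys this integrability property: any other integral curve starting at~$\widehat\boxx$ would then have to coincide with $\RLF(\cdot,\widehat\boxx)$.

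To verify the integrability I would fix a ball $B\subset\R^n$ of starting points: since $\bobb$ is bounded, $\RLF(t,\widehat\boxx)\in B'$ for all $t\in\TTT$ and all $\widehat\boxx\in B$, where $B'$ is a suitably enlarged ball. The near-incompressibility condition~\eqref{e:inc} combined with Fubini yields
\[
\int_B \int_0^T G(t,\RLF(t,\widehat\boxx))\,dt\,d\widehat\boxx\;\leq\; L\int_0^T\int_{B'}G(t,y)\,dy\,dt\,,
\]
so the task reduces to showing that the right-hand side is finite. Since Lemma~\ref{l:maximal} only involves values of $D\bobb$ on the segment between $\boxx$ and $\boy$, one may, without loss of generality, truncate and consider $|D\bobb(t,\cdot)|^p\eta$ for a cutoff $\eta\in C_c^\infty(\R^n)$ identically $1$ on a ball strictly containing $B'$. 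By~\eqref{e:bobb-b'} the truncated function has $L^1$-norm uniformly bounded in~$t$, and the Hardy--Littlewood weak-type estimate gives $G(t,\cdot)\in L^{p,\infty}(\R^n)$ with norm uniformly bounded in~$t$.

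The main (and essentially only) quantitative obstacle is the final passage from this weak-$L^{p,\infty}$ control to $L^1_{\loc}$ integrability: on a bounded set $L^{p,\infty}\hookrightarrow L^q$ for every $q<p$, and since $p>n\geq 1$ one can pick $q\in(1,p)$ and conclude $\|G(t,\cdot)\|_{L^1(B')}\leq C$ uniformly in~$t$. Hence $\int_0^T\int_{B'} G(t,y)\,dy\,dt<\infty$, so $\int_0^T G(t,\RLF(t,\widehat\boxx))\,dt<\infty$ for a.e.~$\widehat\boxx\in B$, and letting $B$ exhaust $\R^n$ yields the conclusion.
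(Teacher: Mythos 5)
Your overall strategy coincides with the paper's: a pointwise Gronwall-type uniqueness argument along any integral curve on which the (root of the) maximal function of a power of $|D\bobb|$ is integrable in time, followed by a verification that almost every trajectory of the regular Lagrangian flow $\RLF$ enjoys this integrability, obtained by changing variables along $\RLF$ via the compressibility bound~\eqref{e:inc}. The only genuine divergence is in the last quantitative step. The paper fixes an intermediate exponent $n<\tilde p<p$, works with $\big[{\mathcal M}\big(|D\bobb|^{\tilde p}\big)\big]^{1/\tilde p}$, and after Jensen's inequality applies the \emph{strong} maximal estimate in $L^{p/\tilde p}$, which is available precisely because $p/\tilde p>1$. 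You keep the exponent $p$, so that $|D\bobb|^{p}$ is merely locally integrable and only the weak-type $(1,1)$ bound applies; you then recover $L^1$ integrability of $G(t,\cdot)=\big[{\mathcal M}\big(|D\bobb(t,\cdot)|^{p}\big)\big]^{1/p}$ on a bounded set from the embedding of $L^{p,\infty}$ into $L^{1}$ on sets of finite measure, using $p>1$. Both routes are correct; the paper's costs an auxiliary exponent but stays within strong $L^q$ estimates, yours is marginally shorter at the price of Lorentz-space bookkeeping.

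Two small inaccuracies, neither fatal. First, you invoke~\eqref{e:bobb-b'} to get a bound ``uniform in $t$'', whereas the corollary only assumes~\eqref{e:bobb-a} ($L^1$ in time); this is harmless since you only need $t\mapsto\|G(t,\cdot)\|_{L^1(B')}$ to be integrable, which follows from $\|G(t,\cdot)\|_{L^1(B')}\leq C\,\|D\bobb(t,\cdot)\|_{L^p}$ and~\eqref{e:bobb-a}. Second, your argument for extending~\eqref{e:newmax} from $\R^n\setminus N$ to all of $\R^n$ points the wrong way: lower semicontinuity of the maximal function gives $\liminf_k {\mathcal M}(\boxx_k)\geq {\mathcal M}(\boxx)$, which does not allow you to pass an inequality of the form $\mathrm{LHS}\leq\mathrm{RHS}$ to the limit. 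The clean fix is to note that the proof of Lemma~\ref{l:maximal} actually yields the two-point inequality at \emph{every} pair of distinct points for the continuous (Morrey) representative, since only averages over balls of radius comparable to $|\boxx-\boy|$ enter; this is also what the paper implicitly uses when it applies the estimate along the trajectories.
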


\begin{proof}
Let $n < \tilde{p}<p$ be fixed. Let $\widehat \boxx \in \R^n$ and assume that there is an integral curve $\bog$ of $\bobb$ starting at $\widehat \boxx$ at time $t=0$ and satisfying
\begin{equation}\label{e:assprov}
\int_0^T \big[ {\mathcal M} \big( |D\bobb|^{\tilde p} \big) (s,\bog(s)) \big]^{1/\tilde p} \, ds < \infty.
\end{equation}
We show that $\bog$ is the unique integral curve of $\bobb$ starting at $\widehat \boxx$ at time $t=0$. We do this by implementing a sort of ``pointwise version'' of the argument in~\cite{CDL}. Let $\widetilde \bog$ be any other integral curve of $\bobb$ starting at $\widehat \boxx$ at time $t=0$. Recalling that $\tilde p>n$ we can use~Lemma~\ref{l:maximal} and estimate
$$
\frac{d}{dt} \log \left(1 + \frac{|\bog(t)-\widetilde\bog(t)|}{\delta}\right)
\leq
\frac{|\bobb(t,\bog(t)) - \bobb(t,\widetilde\bog(t))|}{|\bog(t)-\widetilde\bog(t)|}
\leq C \, \big[ {\mathcal M} \big( |D\bobb|^{\tilde p} \big) (t,\bog(t)) \big]^{1/\tilde p} \,.
$$
Integrating the above inequality in time and using that~$\bog(0)=\widetilde\bog(0)=\widehat \boxx$ we get
$$
\log \left(1 + \frac{|\bog(t)-\widetilde\bog(t)|}{\delta}\right)
\leq
C \, \int_0^T \big[ {\mathcal M} \big(  |D\bobb|^{\tilde p} \big) (s,\bog(s)) \big]^{1/\tilde p} \, ds 
\qquad \forall \ t \in \TTT \,.
$$
Letting $\delta \to 0$ and using~\eqref{e:assprov} we deduce that $\widetilde \bog = \bog$ on $\TTT$.

We recall that for a vector field as in~\eqref{e:bobb} the theory in~\cite{DPL} guarantees the existence of a unique regular Lagrangian flow~$\RLF$. We apply the above argument to the integral curves~$\bog_{\widehat \boxx} = \RLF(\cdot,\widehat \boxx)$. It only remains to check that condition~\eqref{e:assprov} is satisfied for a.e.~$\widehat \boxx \in \R^n$. For every ball $B_R(0) \subset \R^n$ we can estimate
\begin{equation}\label{e:jensen}
\begin{aligned}
\left( \int_{B_R(0)} \int_0^T \big[ {\mathcal M} \big( |D\bobb|^{\tilde p} \big) (s,\bog_{\widehat \boxx}(s)) \big]^{1/\tilde p} \, ds \, d\widehat \boxx \right)^{\tilde p}
& = 
\left( \int_{B_R(0)} \int_0^T \big[ {\mathcal M} \big( |D\bobb|^{\tilde p} \big) (s,\RLF(s,\widehat \boxx) \big]^{1/\tilde p} \, ds \, d\widehat \boxx \right)^{\tilde p} \\
& \leq 
C \, \int_0^T \int_{B_R(0)}  {\mathcal M} \big( |D\bobb|^{\tilde p} \big) (s,\RLF(s,\widehat \boxx)  \, d\widehat \boxx \, ds \\
& \leq 
C \, \int_0^T \int_{B_{R + T \|\bobb\|_\infty} (0)}  {\mathcal M} \big( |D\bobb|^{\tilde p} \big) (s,\widehat \boxx) \, d\widehat \boxx \, ds \,, \\
& \leq 
C \, \left( \int_0^T \int_{B_{R + T \|\bobb\|_\infty} (0)} \left| {\mathcal M} \big( |D\bobb|^{\tilde p} \big) (s,\widehat \boxx) \right|^{p/\tilde p} \, d\widehat \boxx \, ds \right)^{\tilde p/p}\,, \\
& \leq 
C \, \left(\int_0^T \int_{\R^n}  \left| |D\bobb|^{\tilde p}(s,\widehat \boxx) \right|^{p / \tilde p} \, d\widehat \boxx \, ds \right)^{\tilde p/p}\,,
\end{aligned}
\end{equation}
where the first inequality follows from Jensen's inequality, in the second inequality we have changed variable along~$\RLF$ and in the last inequality we have applied the strong estimate for the maximal function (using the fact that $\sfrac{p}{\tilde p}>1$). The constant $C$ in~\eqref{e:jensen} changes from line to line and depends on $R$, $T$, $p$, $\tilde p$, $n$, $\|\bobb\|_\infty$, and the compressibility constant of~$\RLF$ as in~\eqref{e:inc}.  Recalling the assumption~\eqref{e:bobb-a} we conclude that the 
right hand side in~\eqref{e:jensen} is finite, and therefore we conclude that~\eqref{e:assprov} is satisfied for a.e.~$\widehat \boxx \in \R^n$.
\end{proof}

\begin{remark} 
In~\cite[Section~5]{ACM} the authors construct a two-dimensional, bounded, divergence-free vector field satisfying~\eqref{e:bobb-a} for all $1 \leq p < \infty$, the integral curves of which are non unique for initial points on a segment (see in particular~\cite[Remark~5.1]{ACM}). This shows that the result of Corollary~\ref{c:aeuni} does not hold {\em for every} $\widehat \boxx \in \R^n$, not even for very large $p<\infty$.
\end{remark}

For a vector field~$\bobb$ be as in~\eqref{e:bobb} measure solutions to the Cauchy problem~\eqref{e:pde1} are well defined, as we consider the continuous representative of the vector field. 

\begin{corollary}\label{c:coro}
Let $\bobb$ be as in~\eqref{e:bobb} and assume~$p>n$. Fix a nonnegative function~$u_0 \in L^1_\loc(\R^n)$. 
Then the Cauchy problem~\eqref{e:pde1} has a unique solution among all positive measure solutions~$\mu_t$ with $\mu_0 = u_0 \Ll^n$, and this unique measure solution coincides with the unique solution in $L^1_\loc(\TTT\times\R^n)$. 
\end{corollary}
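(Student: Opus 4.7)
My plan is to combine Corollary~\ref{c:aeuni} (the almost-everywhere uniqueness of integral curves, which only requires $p>n$) with Ambrosio's superposition principle for positive measure solutions of the continuity equation. Given any nonnegative locally finite measure solution $\mu_t$ with $\mu_0 = u_0 \Ll^n$, the first step is to invoke the superposition principle to obtain a positive measure $\eta$ on the space of absolutely continuous curves $\bog : \TTT \to \R^n$, concentrated on integral curves of $\bobb$, such that $\mu_t = (e_t)_\# \eta$, where $e_t(\bog) := \bog(t)$ denotes evaluation at time~$t$. The Radon-type integrability hypothesis needed to apply the superposition principle is provided by the growth condition~\eqref{e:bobb-c} together with the local finiteness of $\mu_t$.

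Next, I would exploit the initial condition $(e_0)_\# \eta = u_0 \Ll^n$, which tells us that $\eta$-almost every curve $\bog$ starts at a point distributed according to the absolutely continuous measure $u_0 \Ll^n$. Corollary~\ref{c:aeuni} then guarantees that, for $\eta$-almost every such $\bog$, the integral curve of $\bobb$ starting at $\bog(0)$ is unique and equals $\RLF(\cdot, \bog(0))$. This identifies~$\eta$ as the push-forward of $u_0\Ll^n$ under the map $\widehat \boxx \mapsto \RLF(\cdot, \widehat \boxx)$ and hence forces
\[
\mu_t \;=\; \RLF(t, \cdot)_{\#} (u_0 \Ll^n)\,,
\]
which proves uniqueness among positive measure solutions.

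Finally, the near-incompressibility~\eqref{e:inc} of the regular Lagrangian flow $\RLF$ ensures that $\RLF(t,\cdot)_{\#}(u_0 \Ll^n)$ is absolutely continuous with density in $L^1_\loc(\R^n)$ for every $t \in \TTT$, so the above formula produces a function $u \in L^1_\loc(\TTT \times \R^n)$ which is a weak solution of~\eqref{e:pde1} by construction and Lagrangian. Under the standing assumptions of Theorem~\ref{T:main} it must therefore coincide with the unique $L^1_\loc$ weak solution, completing the proof. The step I expect to be the main obstacle is the clean deployment of the superposition principle in this exact functional setting, namely verifying that a suitable localization of $\int_0^T \int |\bobb(t,\boxx)|/(1+|\boxx|)\, d\mu_t(\boxx)\,dt$ is finite using~\eqref{e:bobb-c} and the local mass bounds on $\mu_t$; this is essentially classical but must be checked carefully before the representation $\mu_t = (e_t)_\# \eta$ can be invoked.
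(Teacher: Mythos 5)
Your proposal follows exactly the paper's route: the paper's proof is a one-line appeal to Corollary~\ref{c:aeuni} together with the superposition principle of~\cite[Theorem~8.2.1]{AGS}, and your argument is a correct, fully spelled-out version of that same combination (superposition representation $\mu_t=(e_t)_\#\eta$, absolute continuity of $\mu_0$ forcing $\eta$-a.e.\ curve to start where integral curves are unique, hence $\mu_t=\RLF(t,\cdot)_\#(u_0\Ll^n)$). The additional care you devote to verifying the integrability hypothesis for the superposition principle and to the final identification with the $L^1_\loc$ solution is sound and consistent with the paper's intent.
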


\begin{proof}
This is a direct consequence of Corollary~\ref{c:aeuni} and of the superposition principle for positive measure solutions of the continuity equation, see~\cite[Theorem~8.2.1]{AGS}.
\end{proof}

See also~\cite{AB,Clop,BG,BCD} for further recent results on the uniqueness of measure solutions to the continuity equation. In particular,~\cite{BCD} contains interesting extensions of Lemma~\ref{l:maximal} and Corollary~\ref{c:coro}.

\begin{remark}
It easy to see that (unless the vector field is regular) in general we cannot have uniqueness of measure valued solutions that change sign, or of positive measure solutions in case the initial datum is a positive measure with a nontrivial singular part.
\end{remark}

The almost-everywhere uniqueness proven in Corollary~\ref{c:aeuni} is in fact not sufficient in order to conclude the proof of Theorem~\ref{T:main}. Indeed, we need the almost-everywhere triviality of forward-backward integral curves. This does not follow in a straightforward way from Corollary~\ref{c:aeuni} through a ``pointwise argument'', as we explain in the following remark.

\begin{remark}\label{r:nouni}
It is possible to construct a divergence-free vector field belonging $L^\infty\left( [0,T] ; W^{1,p}(\R^n;\R^n) \right)$ with the following property: there exists $\widehat \boxx \in \R^n$ such that the
integral curve starting at $\widehat \boxx$ is unique, but there is a nontrivial forward-backward integral curve starting at $\widehat \boxx$. It is unclear to us whether this can happen for starting points in a set of positive measure.
\end{remark} 

We are now in the position of proving our main result. 

\begin{proof}[Proof of Theorem~\ref{T:main}]
We refer to the strategy introduced in~\cite{CC} and summarized in \S~\ref{s:SP}. 

For any $\varepsilon>0$ and $R>0$, the theory in~\cite{CDL} provides a compact set $K_\varepsilon \subset B_R(0)$ with $\Ll^n \big( B_R(0) \setminus K_\varepsilon \big) \leq \varepsilon$ and such that the regular Lagrangian flow is Lipschitz on~$K_\varepsilon$. Moreover, let $N \subset B_R(0)$ be the set of starting points for which the forward-backward integral curves are nontrivial. Since $N$ is negligible by assumption, we can find an open set $N_\varepsilon \supset N$ with $\Ll^n(N_\varepsilon) \leq \varepsilon$. We thus define $\widetilde{K}_\varepsilon = K_\varepsilon \setminus N_\varepsilon$ and notice that $\Ll^n \big( B_R(0) \setminus \widetilde{K}_\varepsilon \big) \leq 2\varepsilon$.

We apply the procedure described in~\cite{CC} with the set $\widetilde{K}_\varepsilon$ replacing the set~$K_\varepsilon$. We only need to check that the function~$\psi$ defined in~\cite[Lemma~3.2]{CC} is $L$-directionally Lipschitz continuous: once this has been done, the remaining steps in the proof in~\cite{CC} work without any further changes. 

In order to do this, we pick any two points $\boxx,\boy \in \RLF([0,T] \times \widetilde{K}_\varepsilon)$. If $d_0(\boxx,\boy)=+\infty$ there is nothing to check. If on the other hand $d_0(\boxx,\boy)<+\infty$, by Lemma~\ref{L:equivalenceDist} the points $\boxx$ and $\boy$ are joined by a forward-backward integral curve. However, in the set $\RLF([0,T] \times \widetilde{K}_\varepsilon)$ forward-backward integral curves are trivial, and their image is contained in the image of an integral curve in the regular Lagrangian flow. Hence we only have to check that the function~$\psi$ is Lipschitz continuous along integral curves in the regular Lagrangian flow (in the sense of~\eqref{e:nicelip}), but this is exactly what is shown in~\cite[Lemma~3.2]{CC}. 
\end{proof}

\addresseshere

\end{document}